%
%
\documentclass[microtype]{gtpart}     
%
%
%
%
%
%

\DeclareMathOperator{\Rm}{\textup{Rm}}
\DeclareMathOperator{\R}{\textup{R}}
\DeclareMathOperator{\vol}{\textup{Vol}}
\DeclareMathOperator{\Ric}{\textup{Ric}}


\title{Noncollapsed degeneration of Einstein $4$-manifolds, I}

%
\author{Tristan Ozuch}
\givenname{Tristan}
\surname{Ozuch}
\address{MIT, Dept. of Math., 77 Massachusetts Avenue, Cambridge, MA 02139-4307.}
\email{ozuch@mit.edu}
\urladdr{https://tristanozuch.github.io/}

%
%
%
%
%

\keyword{}
\subject{primary}{msc2010}{}
\subject{secondary}{msc2010}{}


\arxivreference{}


%
%
%
%
%
\newtheorem{thm}{Theorem}[section]    
\newtheorem{lem}[thm]{Lemma}          
%
\theoremstyle{definition}
\newtheorem{defn}[thm]{Definition}    
\newtheorem{rem}{Remark}             
\newtheorem{quest}{Question}

\newtheorem{exmp}{Example}

\newtheorem{prop}{Proposition}

\newtheorem{cor}{Corollary}
%
%


\begin{document}

\begin{abstract}    
A theorem of Anderson and Bando-Kasue-Nakajima from 1989 states
that to compactify the set of normalized Einstein metrics with a lower bound on the volume and an upper bound on the diameter in the Gromov-Hausdorff sense, one has to add singular spaces called Einstein orbifolds, and the singularities form as blow-downs of Ricci-flat ALE spaces.

This raises some natural issues, in particular: can all Einstein orbifolds be Gromov-Hausdorff limits of smooth Einstein manifolds? Can we describe more precisely the smooth Einstein metrics close to a given singular one?

In this first paper, we prove that Einstein manifolds sufficiently close, in the Gromov-Hausdorff sense, to an orbifold are actually close to a gluing of model spaces in suitable weighted Hölder spaces. The proof consists in controlling the metric in the neck regions thanks to the construction of optimal coordinates.

This refined convergence is the cornerstone of our subsequent work on the degeneration of Einstein metrics or, equivalently, on the desingularization of Einstein orbifolds in which we show that all Einstein metrics Gromov-Hausdorff close to an Einstein orbifold are the result of a gluing-perturbation procedure. This procedure turns out to be generally obstructed, and this provides the first obstructions to a Gromov-Hausdorff desingularization of Einstein orbifolds. 
\end{abstract}

\maketitle

\tableofcontents

\section*{Introduction}

A fundamental question in geometry and topology is the following: given a \emph{topology} (a differentiable manifold, $M$), is there an optimal \emph{geometry} (a Riemannian metric, $g$) with this topology?

An Einstein metric, $g$ satisfies, for some real $\Lambda$, the equation
$$\Ric(g)=\Lambda g,$$
where $\Ric$ is the Ricci curvature. These metrics are considered optimal for the homogeneity of their Ricci curvature and as critical points of the Einstein-Hilbert functional with fixed volume: $g\mapsto\int_M \R_g dvol_g$, where $\R$ is the scalar curvature, which is the spatial and directional mean value of the sectional curvatures.

In dimension $2$ and $3$ these metrics have constant sectional curvatures and are therefore well understood as they only have $3$ different local behavior depending on the sign of the curvature (spherical, flat and hyperbolic). Their understanding was crucial for $2$-dimensional geometry and topology thanks to the uniformization Theorem and in $3$-dimensional geometry topology thanks to Thurston's geometrization.

In dimension $4$, Einstein metrics are moreover optimal as minimizers of the $L^2$-norm of Riemann curvature tensor: $g\mapsto \int_M |\Rm_g|^2dvol_g$. Indeed, By Chern-Gauss-Bonnet formula, we have
$$ \int_M |\Rm_g|^2dv_g = 8\pi^2 \chi(M) +\int_M |\Ric^0_g|^2dv_g,$$
where $\chi$ is the Euler characteristic and where $\int_M |\Ric^0_g|^2dv_g\geqslant 0$ vanishes if and only if $g$ is Einstein. Notice that in this case, the quantity $\int_M |\Rm_g|^2dv_g = 8\pi^2 \chi(M)$ is purely topological.

But from dimension $4$, the Einstein condition also becomes flexible as it does not imply that the sectional curvatures are constant anymore. It is actually so flexible that Einstein metrics can develop singularities. One major goal for $4$-dimensional geometry is therefore to understand the set of Einstein metrics and how they can degenerate, that is to compactify the set of Einstein metrics. In this paper, we will be interested in Einstein $4$-manifolds which are noncollapsed, which means that they have their volume bounded from below. The basic tool for this kind of compactification question is Gromov's compactness theorem \cite{gro} from which the usual goals are to obtain informations about the possible limit spaces, and to understand if the convergence happens in a stronger sense than for the Gromov-Hausdorff distance. 

The description of the limit spaces was given by Anderson and Bando-Kasue-Nakajima. 
\begin{thm}[\cite{and,bkn}]\label{dégénérescence dim 4}
 Let $(M^4_i,g_i)_{i\in\mathbb{N}}$ be a sequence of Einstein $4$-manifolds satisfying,
\begin{enumerate}
\item the diameters of the $(M^4_i,g_i)$ are uniformly bounded,
\item the volumes of the $(M^4_i,g_i)$ are uniformly bounded from below,
\item the integrals $\mathcal{E}_i:= \int_{M^4_i}|\textup{Rm}(g_i)|_{g_i}^2 dv_{g_i}$ are uniformly bounded.
\end{enumerate}
Then, there exists a subsequence $(M^4_{i},g_{i})$ converging to an Einstein orbifold with isolated singularities (see Definition \ref{orb Ein}), $(M^4_\infty,g_\infty)$.
 
 Moreover, for any singular point $p_\infty\in M_\infty$, there exists a sequence $(p_{i})_{i\in \mathbb{N}}$ of points of $ M_i^4$ with $\frac{1}{t_{i}}:= |\textup{Rm}(g_i)|(p_{i})\to\infty$ such that $\left(M^4_{i},\frac{g_{i}}{t_{i}},p_{i}\right)$ converges in the pointed Gromov-Hausdorff sense to a Ricci-flat asymptotically locally Euclidean (ALE) manifold, see Definition \ref{def orb ale}.
\end{thm}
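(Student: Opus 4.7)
The plan is to combine Gromov's compactness theorem with an $\varepsilon$-regularity result specific to Einstein four-manifolds and a bubbling analysis at the points where the $L^2$-energy of the curvature concentrates.

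First, Gromov's theorem produces a subsequence converging in the Gromov--Hausdorff sense to a compact length space $(M_\infty,g_\infty)$: this applies because the Einstein condition yields a two-sided Ricci bound and the diameters are uniformly controlled. I would then introduce the concentration set
$$S := \ens{p\in M_\infty \;:\; \liminf_{i\to\infty}\int_{B_{g_i}(p_i,r)}\module{\Rm(g_i)}^2 dv_{g_i}\geqslant \varepsilon_0 \text{ for all } r>0 \text{ and some } p_i\to p},$$
where $\varepsilon_0$ is the threshold appearing in Anderson's $\varepsilon$-regularity result. The bound $\mathcal{E}_i\leqslant C$ forces $S$ to be finite, of cardinality at most $C/\varepsilon_0$.

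Second, I would upgrade the Gromov--Hausdorff convergence to smooth Cheeger--Gromov convergence on $M_\infty\setminus S$. Near any $q\notin S$, $L^2$-smallness of $\Rm(g_i)$ on a small ball combined with the noncollapsing assumption (preserved in the limit via Bishop--Gromov) feeds into $\varepsilon$-regularity to give a uniform pointwise bound on $\module{\Rm(g_i)}$. In harmonic coordinates the Einstein equation becomes a quasilinear elliptic system, so a bootstrap yields $C^{k,\alpha}$ estimates and a diagonal argument produces a smooth Einstein limit metric $g_\infty$ off $S$. For the orbifold structure at $p\in S$, noncollapsing plus the Ricci bound forces every tangent cone at $p$ to be a metric cone $C(Y)$ of Euclidean volume growth; smoothness of $g_\infty$ off $p$ together with Ricci-flatness of the tangent cone then identifies $Y$ with $S^3/\Gamma$ for some nontrivial finite $\Gamma<O(4)$ acting freely on $S^3$, yielding an isolated orbifold singularity.

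For the bubble at a singularity $p_\infty$, I would pick $p_i\in M_i$ realising the maximum of $\module{\Rm(g_i)}$ in a small $g_\infty$-neighborhood of $p_\infty$, and set $t_i:=\module{\Rm(g_i)}(p_i)^{-1}\to 0$. The rescaled metrics $\tilde g_i:=t_i^{-1}g_i$ satisfy $\Ric(\tilde g_i)=t_i\Lambda\,\tilde g_i\to 0$ with $\module{\Rm(\tilde g_i)}\leqslant 1$ and equal to $1$ at $p_i$. Applying the previous regularity argument on every bounded scale, $(M_i,\tilde g_i,p_i)$ subconverges smoothly on compact sets to a complete noncompact Ricci-flat four-manifold $(N,g_N)$ with finite $L^2$-curvature and maximal volume growth, which is ALE by the gap result of \cite{bkn}. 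The hard part is to rule out loss of curvature at \emph{intermediate} scales between $g_i$ and $\tilde g_i$, so that the entire $L^2$-energy of $\Rm$ accumulating at $p_\infty$ is accounted for by finitely many such ALE bubbles: this volume/energy balance, combined with the classification of flat four-dimensional cones, is the technical heart of the arguments in \cite{and,bkn}.
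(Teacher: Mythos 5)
The paper cites this theorem from \cite{and,bkn} and does not give a proof, so there is no in-paper argument to compare against. Your sketch accurately reproduces the standard proof strategy of those references: Gromov precompactness for the Gromov--Hausdorff limit; finiteness of the $L^2$-concentration set by the energy bound; $\varepsilon$-regularity plus a harmonic-coordinate elliptic bootstrap for smooth convergence away from that set; volume-cone rigidity, combined with the fact that a noncollapsed Ricci-flat four-dimensional metric cone has cross-section of constant sectional curvature $1$, to identify the singularity model as $\mathbb{R}^4\slash\Gamma$; and max-curvature point-picking together with the BKN gap theorem for the ALE bubble. Two remarks. First, your closing paragraph about exhausting \emph{all} the $L^2$-energy by finitely many bubbles at intermediate scales overshoots what this theorem asserts---it only claims existence of one ALE manifold bubble per singular point, at the deepest (max-curvature) scale, which is precisely what the point-picking guarantees. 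The energy-exhaustion and bubble-tree refinement is Bando's result \cite{ban}, which the present paper invokes separately in Proposition \ref{description dégénérescence}. Second, you assert but do not derive the maximal volume growth of the rescaled limit: it follows because Bishop--Gromov on $(M_i,g_i)$ (with $\Ric\geqslant -3$, $\mathrm{diam}\leqslant D_0$, $\vol\geqslant v_0$) gives a scale-invariant lower bound $\vol(B(p_i,r))\geqslant c\,r^4$ for all $r\leqslant D_0$, which persists under the rescaling $g_i\mapsto g_i/t_i$ and is what feeds $\varepsilon$-regularity and noncollapsing at every scale.
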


\begin{rem}
The first hypothesis prevents the formation of cusps, and the second prevents our space to collapse to a lower dimensional space. The third hypothesis, which is actually a consequence of the two others by \cite[Theorem 1.13]{cn}, is a topological condition by Chern-Gauss-Bonnet formula.
\end{rem}

This theorem leaves several questions open, in particular the following one. 
\begin{quest}[{\cite[7.I.]{andsurv}}]\label{quest which orbifolds}
    "It has long been an open question whether Einstein orbifold metrics can be resolved to smooth Einstein metrics in the Gromov-Hausdorff topology. The idea here would be to reverse the process of formation of orbifold singularities described in [Theorem \ref{dégénérescence dim 4}]"
\end{quest}
For example, a natural question is whether or not \emph{any} Einstein orbifold can be desingularized by Einstein metrics, say with an expected topology. 

\subsection*{Desingularization of Einstein $4$-orbifolds and obstructions}

A natural technique to desingularize Einstein orbifolds is a gluing-perturbation procedure: we glue Ricci-flat ALE manifolds to the singularities of the orbifold to obtain an approximate Einstein metric, and then try to perturb it into an actual Einstein metric. The existence of such desingularizations in the \emph{real} Einstein context, and therefore a partial answer to Anderson's question was proven by Biquard for nondegenerate (i.e. which do not have any $L^2$-infinitesimal Einstein deformation) asymptotically hyperbolic Einstein manifolds with one singularity $\mathbb{R}^4\slash\mathbb{Z}_2$. This is realized by a particular gluing procedure of an Eguchi-Hanson metric to the singularity, see \cite[Theorem 0.1]{biq1}. 

Strikingly, this particular desingularization is only possible if an \emph{obstruction} is satisfied: denoting $\mathbf{R}$ the Riemannian curvature seen as an endomorphism on $2$-forms, the orbifold must satisfy $\det \mathbf{R} = 0$ at its singular point. This obstruction is restrictive, and for example not satisfied by hyperbolic orbifolds (which are rigid) which therefore cannot be desingularized by this particular procedure.

\subsection*{Description of the degeneration in suitable weighted Hölder spaces}

The main goal of this series of paper is to prove that the obstruction of \cite{biq1} holds under much less assumptions. The least satisfactory one is that the metrics of the sequence have to come from a particular gluing-perturbation procedure. It even turns out that a convergence in the function spaces of \cite{biq1} cannot be true in general if one drops any of the following assumption:
\begin{enumerate}
    \item the orbifold is rigid (it does not admit infinitesimal Einstein deformations),
    \item there is only one singular point,
    \item the singularity is $\mathbb{R}^4\slash\mathbb{Z}_2$.
\end{enumerate}
The reasons are the following:
\begin{enumerate}
    \item the convergence speed towards the orbifold could be arbitrarily slow compared to the singularity formation,
    \item different singularities may form at different speed,
    \item there might be trees of singularities forming.
\end{enumerate}
In this paper, we will prove the cornerstone result of our program which states that if an Einstein metric is sufficiently close to an Einstein orbifold in the Gromov-Hausdorff sense, then it is actually close to a glued metric in some weighted Hölder norms bounded on symmetric $2$-tensors decaying in the neck regions. These spaces are necessarily different from \cite{biq1} by the above reasons. The main result is the following theorem.

\begin{thm}\label{proximité désing naive}
	Let $D_0,v_0>0$, $l\in \mathbb{N}$, then, there exists $\delta = \delta(D_0,v_0,l) >0$ such that if $(M,g^\mathcal{E})$ is an Einstein manifold satisfying
	\begin{itemize}
		\item the volume is bounded below by $v_0>0$,
		\item the diameter is bounded by $D_0>0$,
		\item the Ricci curvature is bounded $|\Ric|\leq 3$.
	\end{itemize}
	and for which there exists an Einstein orbifold $(M_o,g_o)$ with
	$$d_{GH}\big((M,g^\mathcal{E}),(M_o,g_o)\big)\leqslant \delta,$$
	then, $(M,g^\mathcal{E})$ is the result of a gluing-perturbation procedure detailed in \cite{ozu2}.
\end{thm}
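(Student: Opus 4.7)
The plan is to upgrade the Gromov-Hausdorff proximity to $(M_o,g_o)$ into a quantitative closeness to a glued model in suitable weighted Hölder norms, which is precisely the setup required by the gluing-perturbation procedure of \cite{ozu2}. Invoking Theorem \ref{dégénérescence dim 4} together with an Anderson-type $\epsilon$-regularity principle, one first obtains a decomposition of $(M,g^\mathcal{E})$ into: a \emph{thick} piece on which $g^\mathcal{E}$ is smoothly $C^{k,\alpha}$-close to $g_o$ away from the singular points; finitely many \emph{bubble cores} around the points where $|\Rm(g^\mathcal{E})|$ concentrates and on which, after rescaling by the curvature scale, the metric is smoothly close to a Ricci-flat ALE model; and, between them, long \emph{neck regions} diffeomorphic to annuli in $\real^4/\Gamma$ on which the metric is only known a priori to be close to the flat cone in the Gromov-Hausdorff sense. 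The entire difficulty of the theorem is concentrated in upgrading the control in these necks.

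On each neck, I would construct \emph{optimal coordinates} in which $g^\mathcal{E}$ satisfies a Bianchi-type gauge against the flat cone metric and in which the slowly decaying obstructing modes—those corresponding to Killing fields, conformal Killing fields, and the lowest nontrivial indicial roots of the linearized Einstein operator on $\real^4/\Gamma$—are absorbed respectively into the second-order jet of $g_o$ at the singular point on the orbifold side and into the leading asymptotic data of the ALE bubble on the bubble side. A Schauder fixed-point argument applied to the nonlinear gauged Einstein equation on the neck then bounds $g^\mathcal{E}$ against a naive glued approximate metric in a weighted Hölder norm that decays at both ends of the neck, with size controlled by $\delta$.

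The main obstacle is this neck analysis itself: the linearized Einstein operator on $\real^4/\Gamma$ sits exactly between the natural ALE weight and the natural orbifold weight, so no single weighted Hölder space provides simultaneously a Fredholm framework and a good inverse estimate. One is therefore forced to run an iterative matching scheme which adjusts in tandem the orbifold-side diffeomorphism, the choice of ALE bubble, and the neck gauge, using each slow mode on one side as the boundary data absorbed on the other. This is further complicated when bubbles themselves degenerate into \emph{trees} of smaller bubbles at successively deeper scales, in which case the matching must be run recursively down the tree while the weights are rescaled. Once this iteration is shown to converge uniformly provided $\delta$ is small enough, $(M,g^\mathcal{E})$ is identified as the output of the gluing-perturbation procedure of \cite{ozu2}, proving the theorem.
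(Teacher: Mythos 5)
Your high-level decomposition into thick piece, bubble cores and necks, and your correct identification that the whole difficulty lives in the necks, match the paper. But the core technique you propose for the neck — Bianchi-type gauge against the flat cone, Fredholm theory for the linearized Einstein operator on $\mathbb{R}^4/\Gamma$, Schauder fixed point on the gauged equation, iterative matching of slow modes — is \emph{not} what the paper does, and as written it has several genuine gaps.

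First, to run any gauge-fixing or fixed-point argument in the neck you need an initial, rough diffeomorphism from an annulus in $\mathbb{R}^4/\Gamma$ to the neck in which $g^{\mathcal{E}}$ is already $C^l$-close to $g_e$ at each dyadic scale. You acknowledge that a priori the neck is only GH-close to a flat cone, but you never say how to promote this. The paper does this with Cheeger--Colding's almost volume cone theorem together with $\epsilon$-regularity (Section 2, Proposition \ref{Premières coordonnées}): the almost-conical volume growth plus smallness of $\int_{A}|\Rm|^2$ forces each rescaled annulus to converge smoothly to an annulus of a flat cone $\mathbb{R}^4/\Gamma$. Without some substitute for this step your Schauder argument cannot even be set up.

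Second, you claim the fixed-point argument yields a weighted Hölder estimate ``decaying at both ends of the neck, with size controlled by $\delta$.'' But a fixed-point or inverse-function argument at the critical weights does not by itself produce decay; it only preserves whatever decay is already present in the data, and near the indicial roots it famously loses it. The actual source of the two-sided polynomial decay $\eta(\rho)=C\epsilon[(\rho_1/\rho)^{\beta_1}+(\rho/\rho_2)^{\beta_2}]$ is Bando's pointwise curvature decay in the neck (\cite[Proposition 3]{ban}), which is an independent geometric input you never invoke. The paper feeds this curvature decay, via the formulas \eqref{equatino seconde forme fondamentale 1}--\eqref{equatino seconde forme fondamentale 2} and the Jacobi equation \eqref{est 4 hp}, into a system of ODEs along the normal flow of a CMC foliation, not into a PDE Fredholm problem. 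The role of the group $\Gamma\neq\{e\}$ is that $\Delta^{\mathbb{S}^{n-1}/\Gamma}+(n-1)$ is invertible, which makes the CMC perturbation (Proposition \ref{Existence pert CMC}) and the $s$-ODE well-posed — this is a much more elementary mechanism than the indicial-root bookkeeping you propose.

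Third, your ``iterative matching scheme'' which adjusts the orbifold diffeomorphism, the ALE bubble and the neck gauge in tandem, and then is run recursively down trees of bubbles, is left entirely at the level of intention; you give no criterion for its convergence, and no reason why the loss at a critical weight is recoverable at each step. The paper avoids having to design such a scheme at all: once the CMC foliation $\tilde\Sigma_s$ is in place (Proposition \ref{feuilletage}), it picks the single ``best'' leaf at scale $\bar\rho$ where $\eta$ is minimized, flows outward along $-\operatorname{grad}(1/s)$, and integrates the Riccati-type and Jacobi-type ODEs to propagate the control to both ends of the neck. This is a constructive, one-pass argument, and the decay of $\eta$ is what makes the integrals converge.

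In short: you have the right decomposition and the right target statement, but the proposed mechanism is a different (Biquard-style gauge-fixed gluing) route whose key analytic ingredients — initial coordinates, source of decay, convergence of the matching — are missing. The paper's route via Cheeger--Colding, CMC foliations, and Bando's decay bypasses all three difficulties.
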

Notice that we are exactly in the context of Theorem \ref{dégénérescence dim 4}, and that no assumption is made about the possible singularity models, the number of singular points or the orbifold metric. The goal of the present paper is the construction of suitable coordinates in which we will compare our metrics and prove a decay in the neck regions. 

Theorem \ref{dégénérescence dim 4} actually implies that there is a satisfactory $C^\infty$-convergence on compact regions of the orbifold and of the Ricci-flat ALE spaces (without their singular points) appearing. Our main concern here will be to analyze the neck regions linking them, and to prove that there is convergence in a weighted $C^\infty$-sense. The main idea is to foliate these neck regions by constant mean curvature hypersurfaces thanks to which we will construct coordinates in which we will finally control our Einstein metrics. 

\subsection*{Obstructions to the Gromov-Hausdorff desingularization of Einstein orbifolds}

In the compact case, we do not expect to desingularize general Einstein orbifolds by smooth Einstein metrics as there are infinitely many global (expected) obstructions, but we can identify obstructions to such a desingularization. In particular, we will identify the only local obstruction, $\det \mathbf{R} = 0$ at the singular points, to desingularizing orbifolds for a large class of manifolds.

There is a well-known family of Ricci-flat ALE spaces called \emph{gravitational instantons} which have been classified by \cite{kro} and the Kähler Ricci-flat ALE spaces which are quotients of gravitational instantons have been classified in \cite{suv}. It is a famous conjecture that all Ricci-flat ALE spaces are Kähler. One of our main results is an obstruction to the desingularization by this conjecturally exhaustive list of candidates.

\begin{thm}[{\cite{ozu2}}]\label{thm res min}
	Let $(M_i,g_i)$ be a sequence of Einstein manifolds converging in the Gromov-Hausdorff sense to an Einstein orbifold $(M_o,g_o)$, and assume that every singularity blow-up is a Kähler Ricci-flat ALE orbifold in the positive orientation.
	
	Then, at any singular point $p$ of $(M_o,g_o)$, we have
	$$\det \mathbf{R}_{g_o}(p) =0.$$
\end{thm}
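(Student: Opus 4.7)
The plan is to couple Theorem \ref{proximité désing naive} to the obstruction analysis of the gluing--perturbation procedure. For $i$ large enough, Theorem \ref{proximité désing naive} writes $(M_i,g_i)$ as the output of the gluing--perturbation developed in \cite{ozu2}: $g_i=g_i^{\textup{app}}+h_i$, where $g_i^{\textup{app}}$ is obtained by pre-gluing, at scales $t_{k,i}\to 0$, the prescribed Kähler Ricci-flat ALE models $(N_k,h_k)$ to the singular points $p_k$ of $(M_o,g_o)$, and $h_i$ is a perturbation small in the weighted Hölder spaces constructed in the present paper.

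I would then linearize the Einstein equation $\Ric(g_i)=\Lambda g_i$ around $g_i^{\textup{app}}$ and solve it by an implicit function theorem. The linearization on each Ricci-flat ALE model has a nontrivial $L^2$ cokernel generated by $L^2$ infinitesimal Einstein deformations, so the existence of the exact solution $g_i$ forces the $L^2$ projection of the Einstein defect onto this cokernel to vanish. Expanding the defect $\Ric(g_i^{\textup{app}})-\Lambda g_i^{\textup{app}}$ in powers of $t_{k,i}$, the leading term is controlled by $\mathbf{R}_{g_o}(p_k)$ transplanted to $N_k$, and the leading-order solvability condition takes the schematic form
$$\scal{\mathbf{R}_{g_o}(p_k)\,\bar\omega}{\bar\omega}=0$$
for every element $\omega$ of the cokernel, with $\bar\omega$ the image in $\Lambda^2 T_{p_k}M_o$ under the gluing identification. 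This is exactly the shape of the obstruction that Biquard found for Eguchi-Hanson in \cite{biq1}.

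It remains to show that, for a general Kähler Ricci-flat ALE model, this family of quadratic identities forces $\det \mathbf{R}_{g_o}(p_k)=0$. Here I would invoke the Kronheimer \cite{kro} and Suvaina \cite{suv} classifications: every such model is a (possibly trivial) quotient of a hyperkähler ALE manifold and carries enough $L^2$ harmonic $2$-forms---Kähler forms in the self-dual part, together with the contributions produced by the orbifold quotient---that their pushforwards in $\Lambda^2 T_{p_k}M_o$ meet both $\Lambda^+$ and $\Lambda^-$ in subspaces rich enough to saturate the determinantal identity. The main obstacle I anticipate is precisely this last combinatorial step: running through the Kronheimer--Suvaina list and checking that the obstructions, with their correct orientation conventions and their $L^2$ (as opposed to merely asymptotic) nature, combine into the single identity $\det \mathbf{R}_{g_o}(p_k)=0$ rather than into a strictly weaker or strictly stronger condition.
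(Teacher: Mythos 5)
This theorem is stated in the present paper as a result from the companion paper \cite{ozu2}; the article under review only supplies the analytical cornerstone, namely Theorem \ref{proximité désing naive} and its refined form Theorem \ref{dégénérescence et proximité C1alpha}. So there is no internal proof for you to match against. Your high-level plan is, however, consistent with the program announced in the introduction: use Theorem \ref{proximité désing naive} to realize $g_i$ as a gluing-perturbation at scales $t_{k,i}\to 0$ and then extract obstructions from the cokernel of the linearized Einstein operator on each ALE bubble.

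The gap you honestly flag at the end is, in my view, a genuine one, and the schematic obstruction you write down would not close it. You propose the condition $\scal{\mathbf{R}_{g_o}(p_k)\bar\omega}{\bar\omega}=0$ for each cokernel element $\omega$, i.e.\ a family of \emph{scalar} diagonal constraints on $\mathbf{R}^{\pm}$. That shape of condition cannot produce $\det\mathbf{R}=0$: if the $\bar\omega$ span $\Lambda^{+}$, the constraint forces the diagonal of $\mathbf{R}^{+}=W^{+}+\tfrac{\R}{12}\mathrm{Id}$ to vanish, contradicting $\mathrm{tr}\,\mathbf{R}^{+}=\tfrac{\R}{4}\neq 0$ whenever $\Lambda\neq 0$; if the $\bar\omega$ span less, you only get a few scalar equations, strictly weaker than a determinant. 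The actual mechanism, already present in \cite{biq1} and flagged in the paper's remark about the gluing gauge $\phi_k\in\mathrm{Isom}(\mathbb{R}^4/\Gamma_k)$, is that the leading obstruction is a \emph{vector}-valued condition: pairing the Einstein defect against the cokernel produces, to leading order, $\mathbf{R}^{\pm}_{g_o}(p_k)$ applied to the asymptotic self-dual (resp.\ anti-self-dual) $2$-form singled out by the Kähler structure of the bubble and by the gluing rotation. The rotation parameter is not known a priori, but the convergent sequence realizes \emph{some} choice of it; the vanishing of this vector for that choice means precisely that $\mathbf{R}^{\pm}$ has nontrivial kernel, i.e.\ $\det\mathbf{R}^{\pm}=0$, hence $\det\mathbf{R}=0$. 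Without this gauge degree of freedom the determinantal statement is simply not recoverable, so your sketch needs to be recast around the linear, gauge-dependent obstruction rather than a quadratic form. Relatedly, the Kronheimer--Suvaina classifications are needed mainly to identify the asymptotic form carried by a Kähler Ricci-flat ALE bubble and the structure of its $L^2$ Einstein deformations; the determinantal conclusion should then drop out uniformly, not from a case-by-case combinatorial check over the classification list.
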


The condition on the singularity model can be reformulated as a topological assumption by \cite{nak} and we have the following example.
\begin{exmp}[{\cite{ozu2}}]
    Consider $\mathbb{S}^4\slash\mathbb{Z}_2$ the Einstein orbifold with two $\mathbb{R}^4\slash\mathbb{Z}_2$ singularities obtained as the quotient of $\mathbb{S}^4$ by $\{\pm\}$ in a global geodesic chart. Then, there exists a differentiable manifold $M^4$, such that for any $1\leqslant p<\infty$, we can construct a sequence of metrics $(M,g_i)$ with both $$\|\Ric(g_i)-3g_i\|_{L^p(g_i)}\to 0, \text{ and }\Ric(g_i)\geqslant 3 g_i, $$
    while $$(M,g_i)\xrightarrow[]{GH} (\mathbb{S}^4\slash\mathbb{Z}_2,g_{\mathbb{S}^4\slash\mathbb{Z}_2}),$$
    but there \emph{does not} exist any sequence of Einstein metrics with
    $$\Ric(g_i)=3g_i,$$ and $$(M,g_i)\xrightarrow[]{GH} (\mathbb{S}^4\slash\mathbb{Z}_2,g_{\mathbb{S}^4\slash\mathbb{Z}_2}).$$
\end{exmp}

Let us finally mention one more application in the context of spin manifolds.

\begin{thm}[{\cite{ozu2}}]
    Let $(M_i,g_i)$ be a sequence of \emph{spin} Einstein $4$-manifolds converging in the Gromov-Hausdorff sense to an Einstein orbifold $(M_o,g_o)$. Then, $(M_o,g_o)$ is spin, and at each singular point with group in $SU(2)$, we have $$\det \mathbf{R}_{g_o} = 0.$$
\end{thm}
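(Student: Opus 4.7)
The plan is to reduce to a local version of Theorem \ref{thm res min}: the core task is to show that, under the spin hypothesis, the singularity blow-up at each point with group in $SU(2)$ is a Kähler Ricci-flat ALE manifold. I begin by verifying that $(M_o,g_o)$ is itself spin. By Theorem \ref{proximité désing naive}, for $i$ large the manifold $M_i$ arises from a gluing-perturbation of $(M_o,g_o)$ with Ricci-flat ALE bubbles, and the convergence is $C^\infty$ on compact subsets of the smooth part $M_o^{\mathrm{reg}}$. The spin structures on the $M_i$ therefore descend to a spin structure on $M_o^{\mathrm{reg}}$. Near a singular point $p$ with local group $\Gamma$, the relevant region of $M_i$ is diffeomorphic to a connected sum of a small ball in $M_o$ with the ALE model $X_\Gamma$, and the restriction of the spin structure of $M_i$ to the end of $X_\Gamma$ pulls back to a $\Gamma$-equivariant spin structure on $\mathbb{R}^4\setminus\{0\}$, i.e. a lift $\Gamma \to \mathrm{Spin}(4)$. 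This is exactly the data needed to extend the orbifold spin structure across $p$.

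The main step is to show that for $\Gamma \subset SU(2)$ the bubble $(X_\Gamma,g_X)$ is Kähler Ricci-flat. Through $SU(2) \subset \mathrm{Spin}(4)$, the asymptotic spin structure on $X_\Gamma$ forced by the argument above coincides with the one pulled back from the standard Calabi-Yau spin structure on $\mathbb{C}^2/\Gamma$, which carries two linearly independent parallel spinors on the flat end. I would then extend these asymptotically parallel spinors to globally parallel spinors on $X_\Gamma$: Ricci-flatness gives the Lichnerowicz identity $D^2=\nabla^{*}\nabla$ on spinors, so harmonic spinors are parallel, and a Fredholm argument for the Dirac operator on the weighted Hölder spaces adapted to the ALE end produces two independent harmonic spinors with the prescribed asymptotic behavior. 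On an oriented Ricci-flat $4$-manifold the existence of two parallel spinors is equivalent to a hyperkähler structure, so in particular $X_\Gamma$ is Kähler Ricci-flat ALE.

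With this in hand, the proof of Theorem \ref{thm res min} in \cite{ozu2}, which is local at each singularity, applies at each singular point whose bubble is Kähler Ricci-flat ALE and yields $\det\mathbf{R}_{g_o}(p)=0$ there. The main obstacle in this plan lies in the second step: one must identify the spin structure induced at infinity on $X_\Gamma$ with the Calabi-Yau one, and rigorously extend the asymptotically parallel spinors to globally parallel spinors on $X_\Gamma$ by controlling the Dirac operator on the weighted function spaces of the ALE end, with careful attention to the indicial roots and to the absence of stray harmonic spinors.
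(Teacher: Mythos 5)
The theorem you are asked to prove is, in this paper, only \emph{stated} and attributed to the companion paper \cite{ozu2}; no proof is given here, so there is nothing in the present text to compare your argument against line by line. I can therefore only assess the outline on its own terms and relative to the paper's framework. Your route is the natural one and is almost certainly the intended one: push the spin structure from the $M_i$ down onto the orbifold and up onto the bubbles, invoke a Nakajima/Witten positive-mass argument to get parallel spinors on the bubbles and hence hyperk\"ahler structure, and then feed this into the local obstruction of Theorem \ref{thm res min}. The use of Lichnerowicz $D^2=\nabla^*\nabla$, Fredholm theory on weighted spaces for the ALE end, and the equivalence \emph{two parallel spinors $\Leftrightarrow$ hyperk\"ahler in dimension $4$} are all correct.

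There are, however, two real gaps that you should not leave implicit. First, the assertion that the spin structure inherited by $X_\Gamma$ from $M_i$ ``coincides with the one pulled back from the standard Calabi-Yau spin structure on $\mathbb{C}^2/\Gamma$'' is exactly the crux, and it is not automatic. For $\Gamma\subset SU(2)$ with $\mathrm{Hom}(\Gamma,\mathbb{Z}/2)\neq 0$ (e.g.\ $\Gamma=\mathbb{Z}_{2k}$, binary dihedral, binary octahedral) the end $\mathbb{S}^3/\Gamma$ carries more than one spin structure; the lifts of $\Gamma\subset SO(4)$ to $\mathrm{Spin}(4)=SU(2)_+\times SU(2)_-$ form a torsor over $\mathrm{Hom}(\Gamma,\mathbb{Z}/2)$, and only the untwisted lift $\gamma\mapsto(\gamma,1)$ leaves a nonzero space of constant negative spinors invariant. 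For a twisted lift there are no asymptotically parallel spinors at all and the Witten--Nakajima argument has nothing to extend. You do flag this, but the plan needs an actual mechanism forcing the untwisted lift --- most plausibly via the APS index formula and the dependence of the $\eta$-invariant of $\mathbb{S}^3/\Gamma$ on the spin structure, or via a Mayer--Vietoris computation of $w_2$ across the neck tying the choice on the bubble to the choice on the orbifold. Without this, the step ``two asymptotically parallel spinors exist'' is unjustified.

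Second, Theorem \ref{thm res min} as stated requires \emph{every} singularity blow-up to be K\"ahler Ricci-flat, and the degeneration here may produce a whole tree of bubbles rooted at $p$ (this is exactly why the paper introduces desingularization patterns). Your argument only directly handles the top bubble at $p$. You should close the induction: once the top bubble $X_\Gamma$ is hyperk\"ahler, its orbifold singularities necessarily have groups in $Sp(1)=SU(2)$, and each of these bubbles again inherits a spin structure from the $M_i$, so the spinor argument re-applies at every depth. Combined with the (assumed, and plausible, but not visible in the present paper) locality of the proof of Theorem \ref{thm res min} at each singular point, this gives $\det \mathbf{R}_{g_o}(p)=0$. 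Both gaps are fixable, but as written the proposal is incomplete precisely at the two places where the spin hypothesis must do real work.
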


\subsection*{Outline of the paper}

In Section 1, we start by giving the principal needed definitions. We then precise the convergence in Theorem \ref{dégénérescence dim 4} thanks to the introduction of glued metrics which we call \emph{naïve desingularizations}. The goal of this article is to prove that they approximate Einstein metrics which are Gromov-Hausdorff close to an orbifold in the sense of appropriate weighted Hölder norms. We finally identify the neck regions in Einstein manifolds which are Gromov-Hausdorff close to an orbifold. The rest of the paper will be focused on these regions, and in Section 2, we construct some first coordinates in these neck regions by $\epsilon$-regularity.

In Section 3, we motivate the use of constant mean curvature hypersurfaces and prove that they are well controlled by the ambient geometry. 

We develop a perturbation technique in Section 4 which we use in Section 5 to foliate the neck regions by constant mean curvature hypersurfaces by perturbation of the spheres of the first coordinates of Section 2. 

In Section 6, we finally construct coordinates based on the foliation of Section 5 and use them to control the metric at the scale of the curvature which decays by the results of \cite{ban}.

\subsection*{Acknowledgements}

I would like to thank my PhD advisor, Olivier Biquard, for introducing me to the questions motivating this series of articles and for his constant support and encouragement.

\section{Trees of singularities and naïve desingularizations}

Let us start by giving some definitions and explaining why we can reduce our situation to the study of neck regions.

\subsection{Einstein orbifolds and Ricci-flat ALE orbifolds}

The singular limit spaces in Theorem \ref{dégénérescence dim 4} are Einstein orbifolds (with isolated singularities).

\begin{defn}[Orbifold (with isolated singularities)]\label{orb Ein}
    We will say that a metric space $(M_o,g_o)$ is an orbifold of dimension $n\in\mathbb{N}$ if there exists $\epsilon_0>0$ and a finite number of points $(p_k)_k$ of $M_o$ called \emph{singular} such that we have the following properties:
    \begin{enumerate}
        \item the space $(M_o\backslash\{p_k\}_k,g_o)$ is a manifold of dimension $n$,
        \item for each singular point $p_k$ of $M_o$, there exists a neighborhood of $p_k$, $ U_k\subset M_o$, a finite subgroup acting freely on $\mathbb{S}^{n-1}$, $\Gamma_k\subset SO(n)$, and a diffeomorphism $ \Phi_k: B_e(0,\epsilon_0)\subset\mathbb{R}^n\slash\Gamma_k \to U_k\subset M_o $ for which, the pull-back of $\Phi_k^*g_o$  on the covering $\mathbb{R}^n$ is smooth.
    \end{enumerate}
\end{defn}

\begin{rem}
    Orbifold metrics will most of the time be indexed by a little $o$ for \emph{orbifold}.
\end{rem}

Theorem \ref{dégénérescence dim 4} only describes the formation of singularities at the scale of the maximum of the curvature where a Ricci-flat ALE \emph{manifold} appears as a blow up, but there might actually be other singularities forming at different scales. They are modeled on Ricci-flat ALE \emph{orbifolds}. The relevant model spaces for us are therefore Einstein orbifolds and Ricci-flat ALE orbifolds with isolated singularities.

\begin{defn}[ALE orbifold (with isolated singularities)]\label{def orb ale}
    An ALE orbifold of dimension $n\in\mathbb{N}$, $(N,g_{b})$ is a metric space for which there exists $\epsilon_0>0$, singular points $(p_k)_k$ and a compact $K\subset N$ for which we have:
    \begin{enumerate}
        \item $(N,g_b)$ is a orbifold of dimension $n$,
        \item there exists a compact subset $K\subset N$ and a diffeomorphism $\Psi_\infty: (\mathbb{R}^n\slash\Gamma_\infty)\backslash B_e(0,\epsilon_0^{-1}) \to N\backslash K$ such that we have $$r_e^l|\nabla^l(\Psi_\infty^* g_b - g_e)|_{C^2(g_e)}\leqslant C_l r_e^{-n}.$$
    \end{enumerate}
\end{defn}
\begin{rem}
    ALE metrics will most of the time be indexed by a little $b$ for \emph{bubble}.
\end{rem}

\subsection{Naïve desingularizations of an orbifold by trees of singularities}

By \cite{ban}, under the assumptions of Theorem \ref{dégénérescence dim 4}, at each singular point, a tree of Ricci-flat orbifolds forms. We index them by $b_j$ for $j$-th bubble.

\begin{defn}[Tree of singularities or desingularization pattern]
    Consider $(M_o,g_o)$ an Einstein orbifold and $S_o$ a subset of its singular points, and $(N_j,g_{b_j})_j$ a family of Ricci-flat ALE spaces asymptotic at infinity to $\mathbb{R}^4\slash\Gamma_j$ and $(S_{b_j})_j$ a subset of their singular points. Let us also assume that there is a one to one "gluing" map  $p:j\mapsto p_j\in S_o\cup \bigcup_k S_{b_k}$, where the singularity at $p_j$ is $\mathbb{R}^4\slash\Gamma_j$. We will call $D:= \big((M_o,g_o,S_o),(N_j,g_{b_j},S_{b_j})_j, p\big)$ a \emph{tree of singularities} or a \emph{desingularization pattern} for $(M_o,g_o)$ depending on the point of view.
\end{defn}

Let us define metrics which will mimic the degeneration of Einstein manifolds of Theorem \ref{dégénérescence dim 4}, they will be called \emph{naïve desingularizations} and the main goal of this paper is to show that degenerations of Einstein manifolds are well approximated by these naïve desingularizations.

Let us start by defining the naïve gluing of an ALE space $(N,g_b)$ to a singularity of an Einstein orbifold $(M_o,g_o)$. Recall the constant $\epsilon_0>0$ of Definitions \ref{orb Ein} and \ref{def orb ale}.

    Let $0<2\epsilon<\epsilon_0$, $0<t<\epsilon^4$, and $(M_o,g_o)$ be an orbifold and $\Phi: B_e(0,\epsilon_0)\subset\mathbb{R}^4\slash\Gamma \to U$ a local chart around a singular point $p\in M_o$ which satisfies $\Phi^*g_o = g_e + \mathcal{O}(r_e^2)$. Let also $(N,g_b)$ be an ALE orbifold asymptotic to $\mathbb{R}^4\slash\Gamma$, and $\Psi_\infty: (\mathbb{R}^4\slash\Gamma_\infty)\backslash B_e(0,\epsilon_0^{-1}) \to N\backslash K$ a chart at infinity in wich $\Psi_\infty^* g_b = g_e + \mathcal{O}(r_e^{-4})$. Consider finally $\chi$ a $C^\infty$-function from $\mathbb{R}^+$ to $\mathbb{R}^+$ supported on $[0,2]$ and equal to $1$ on $[0,1]$ and for all $s>0$, define $\phi_s: x\in \mathbb{R}^4\slash\Gamma\to sx \in \mathbb{R}^4\slash\Gamma$.
    
    We then define the orbifold $M_o\#N$ as $N$ glued to $M_o$ thanks to the diffeomorphism $ \Phi\circ\phi_{\sqrt{t}}\circ\Psi^{-1} : \Psi(A_e(\epsilon_0^{-1},\epsilon_0t^{-1/2}))\to \Phi(A_e(\epsilon_0^{-1}\sqrt{t},\epsilon_0))$ which identifies their annuli $\Phi(A_e(\epsilon_0^{-1}\sqrt{t},\epsilon_0))$ and $\Psi(A_e(\epsilon_0^{-1},\epsilon_0t^{-1/2}))$.

\begin{defn}[Naïve gluing of an ALE space to an orbifold singularity]
    We define a \emph{naïve desingularization} of an orbifold $(M_o,g_o)$ at a singular point $p$ by an ALE metric $(N,g_b)$ at scale $t$, which we will denote $(M_o\#N,g_o\#_{p,t}g_b)$ by putting $g_o\#_{p,t}g_b=g_o$ on $M_o\backslash U$, $g_o\#_{p,t}g_b=tg_b$ on $K\subset N$, and 
    $$g_o\#_{p,t}g_b = \chi(t^{-1/4}r_e)\Phi^*g_o + \big(1-\chi(t^{-1/4}r_e)\big) \phi_{t^{1/2},*}\Psi_\infty^*g_b$$
    on $A(t,\epsilon):=A_e(\epsilon^{-1}\sqrt{t},b\epsilon)$.
\end{defn}

More generally, it is possible to desingularize iteratively by trees of Ricci-flat ALE orbifolds. Let $0<2\epsilon<\epsilon_0$, $(M_o,g_o)$ be an Einstein orbifold and $(N_j,g_{b_j})_j$ be a tree of ALE Ricci-flat orbifolds such that $M = M_o\#_jN_j$ is the result of a desingularization pattern $D$, and let $0<t_j<\epsilon^4$ be \emph{relative gluing scales}.

\begin{defn}[Naïve desingularization of an orbifold by a tree or Ricci-flat orbifolds] \label{def desing naive}

    The \emph{naïve desingularization} metric $g^D_t$ on $M = M_o\#_jN_j$ is then the result of the following iterative construction. 
    
    Start with a deepest bubble $(N_j,g_{b_j})$, that is, $j$ such that $S_j= \emptyset$. If $p_j\in N_k$ and we replace $(N_k,g_{b_k},S_j)$ and $(N_j,g_{b_j},\emptyset)$ by $(N_k\#N_j,g_{b_k}\#_{p_j,t_j}g_{b_j},S_k\backslash\{p_j\})$ and restrict $p$ as $l\to p_l$ for $l\neq j$ in $D$ and consider another deepest bubble, the same works if $p_j\in M_o$.
\end{defn}

    Moreover, if $N_j$ is glued to $p_j\in N_{j_1}$, and $N_{j_1}$ is glued to $p_{j_1}\in N_{j_2}$, ..., $N_{j_{k-1}}$ is glued to $N_{j_k}$, which is glued to $M_o$, we define $T_j:= t_{j_1}t_{j_2}...t_{j_k}$. This way, on each $N_j(2\epsilon)$, the metric is $T_jg_{b_j}$.

\begin{rem}
    Our construction depends on a gauge choice: the diffeomorphisms used to glue the infinity of our ALE spaces to orbifold singularities can be composed with any isometry of $\mathbb{R}^4\slash \Gamma_k$. There is therefore a gluing gauge $\phi_k\in Isom(\mathbb{R}^4\slash\Gamma_k)$ at each point. These choices are equivalent to gluing different ALE spaces and they lead to different metrics, hence we will not worry too much about this degree of freedom here.
\end{rem}

\subsection{Coordinates on a naïve desingularization}

The above metric $g^D_t$ on $M$ of Definition \ref{def desing naive} has some well identified regions in which the metric is that of the orbifold, of one of the Ricci-flat ALE spaces or close to an annulus of a flat cone

\begin{defn}[Coordinates on the model spaces: $M_o(\epsilon)$, $N_j(\epsilon)$ and $A_k(t,\epsilon)$]
With the notations of definitions \ref{orb Ein} and \ref{def orb ale}, for $0<\epsilon\leqslant\epsilon_0$, we will denote
\begin{itemize}
    \item $M_o(\epsilon):= M_o\backslash  \Big(\bigcup_k \Phi_k(B_e(0,\epsilon)) \Big)\subset M_o,$
    \item $N_j(\epsilon):= N_j\backslash  \Big(\bigcup_k \Psi_k(B_e(0,\epsilon)) \cup \Psi_\infty \big((\mathbb{R}^4\slash\Gamma_\infty)\backslash B_e(0,\epsilon^{-1})\big)\Big)\subset N_j,$
    \item $A_j(t,\epsilon):=A_e(\epsilon^{-1}\sqrt{T_j}\sqrt{t_j},\epsilon\sqrt{T_j})\subset \mathbb{R}^4\slash\Gamma_k$.
\end{itemize}
    We see that these sets naturally embed in our manifold $M$ thanks to the above naïve desingularization construction. 
\end{defn}

\begin{rem}
    The sets $M_o(\epsilon)$ and $N_j(\epsilon)$ are the compact sets of the orbifolds minus their singular points which appear in \cite[Theorem A]{ban}. They are exhaustive as $\epsilon$ tends to zero.
\end{rem}

They moreover form a covering of $M = M_o\#_jN_j$ adapted to a degeneration at relative scales $(t_j)_j$:
\begin{equation}
    M = \bigcup_j\big(N_j(\epsilon)\cup A_j(t,\epsilon)\big) \cup   M_o(\epsilon).
\end{equation}

\subsection{Coordinates on an Einstein metric}

The goal will be to identify each of these regions with a region of an Einstein manifold $(M,g)$ close enough to an Einstein orbifold $(M_o,g_o)$ in the Gromov-Hausdorff sense.

\begin{defn}[Manifold $\epsilon$-approximated by a naïve desingularization]
Let $\epsilon>0$, $l\in\mathbb{N}$ and fix $t = (t_j)_j$ with $t_j<\epsilon^4$ for all $j$. We will say that a Riemannian manifold $(M,g)$ is \textit{$\epsilon$-approximated in  $C^l$-norm} by a naïve desingularization $(M,g^D_{t})$ of $(M_o,g_o)$, if there exists a diffeomorphism $\Phi: M\to M$ such that if we denote $\Phi(M_o(\epsilon)) = \mathcal{M}_o(\epsilon)\subset M$ and $\Phi(N_j(\epsilon)) = \mathcal{N}_{j}(\epsilon)\subset M$ and $\mathcal{A}_k(t,\epsilon)$ the region of $ M\backslash \mathcal{M}_o(32\epsilon)\cup \bigcup_j \mathcal{N}_j(32\epsilon)$ with nonempty intersection with $\Psi_\infty \big( B_e(0,\epsilon^{-1})\big)\subset\mathcal{N}_k(\epsilon)$:
	\begin{enumerate}
	\item on $\mathcal{M}_o(\epsilon)$ in $M$, we have
		$$\left\|\Phi^*g-g_o\right\|_{C^l(g_o)}=\left\|\Phi^*g-g^D\right\|_{C^l(g^D)}\leqslant \epsilon,$$
	\item on the zone $\mathcal{N}_{j}(\epsilon)$ in $M$, we have
	$$\left\|\frac{\Phi^*g}{T_{j}}-g_{b_j}\right\|_{C^l(g_{b_j})} = \left\|\Phi^*g-g^D\right\|_{C^l(g^D)} \leqslant \epsilon,$$
	\item and on $\mathcal{A}_k(t,\epsilon)$, we have
	$$\int_{\mathcal{A}_k(t,\epsilon)}|\Rm|^2dv_g<\epsilon^2.$$
	\end{enumerate}
\end{defn}
\begin{prop}\label{description dégénérescence}
Under the assumptions of Theorem \ref{dégénérescence dim 4}, given a sequence $(M_i,g_i)_i$ of Einstein manifolds converging in the Gromov-Hausdorff sense to an Einstein orbifold $(M_o,g_o)$, there exist a subsequence still denoted $(M_i,g_i)_i$ and naïve desingularizations $(M_i,g^D_{t_i})$ such that for all $\epsilon>0$ and $l\in \mathbb{N}$, for $i$ large enough, the manifold $(M_i,g_i)$ is $\epsilon$-approximated in $C^l$-norm by $(M_i,g^D_{t_i})$.
\end{prop}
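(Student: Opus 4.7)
The plan is to iterate the singularity formation procedure of Theorem \ref{dégénérescence dim 4} to construct a finite tree of singularities, and then use Anderson-style $\epsilon$-regularity to upgrade GH-convergence to smooth convergence on each piece of the tree. Recall that $\epsilon$-regularity for Einstein $4$-manifolds asserts that at any point where the $L^2$-energy of $\Rm$ on a small ball is below a dimensional threshold, uniform $C^{l,\alpha}$ bounds hold in harmonic coordinates. This immediately yields $C^l_{\mathrm{loc}}$-convergence of $(M_i,g_i)$ to $(M_o,g_o)$ on the regular part of $M_o$, and hence diffeomorphisms $\Phi_i^{(o)}: M_o(\epsilon)\to M_i$ realizing condition (1).

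To build the tree, pick any singular point $p\in M_o$; by Theorem \ref{dégénérescence dim 4}, there is a scale $t_{1,i}\to 0$ at which $(M_i, t_{1,i}^{-1}g_i)$ converges in the pointed GH sense to a Ricci-flat ALE manifold $N_1$. If $N_1$ carries orbifold singular points, one applies the same rescaling argument at each such point to extract a second layer of bubbles $N_2$, and iterates. By \cite{ban}, this process terminates after finitely many steps, essentially because each bubble contributes a definite amount of $L^2$-curvature bounded below by an $\epsilon$-regularity threshold, while the total energy $\mathcal{E}_i$ is bounded by hypothesis. After a diagonal subsequence extraction one obtains a desingularization pattern $D=\bigl((M_o,g_o,S_o),(N_j,g_{b_j},S_{b_j})_j,p\bigr)$ together with relative gluing scales $t_{j,i}\to 0$; the same $\epsilon$-regularity then yields diffeomorphisms $\Phi_i^{(j)}: N_j(\epsilon)\to M_i$ with $T_{j,i}^{-1}(\Phi_i^{(j)})^* g_i \to g_{b_j}$ in $C^l$-norm, establishing condition (2).

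The smallness of $\int_{\mathcal{A}_k(t,\epsilon)}|\Rm|^2dv_g$ in condition (3) is automatic from the construction: at every intermediate scale covered by $\mathcal{A}_k$ no further curvature concentration occurs (otherwise the tree would have been extended at that scale), and the total $L^2$-energy is exhausted in the limit by the contributions on the pieces $M_o(\epsilon)$ and $N_j(\epsilon)$; hence the residual energy on the necks must tend to zero along the subsequence. Finally, the local diffeomorphisms $\Phi_i^{(o)}$ and $\Phi_i^{(j)}$ are glued into a single $\Phi_i$ on $M=M_o\#_j N_j$ by using that on each overlap both descriptions are $C^l$-close to a fixed flat cone $\mathbb{R}^4\slash\Gamma_k$ in their respective coordinates; the transition is thus $C^l$-close to an isometry of the cone, which can be absorbed into the gluing gauges $\phi_k$. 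The main obstacle will be this matching step combined with the termination of the iterative bubbling, both of which rely crucially on the uniform $L^2$-curvature bound provided by the Chern-Gauss-Bonnet formula.
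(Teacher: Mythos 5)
Your proposal follows essentially the same route as the paper's proof: iterate the bubbling of Theorem \ref{dégénérescence dim 4} to build the finite tree, upgrade Gromov--Hausdorff to $C^l$-convergence on the compact pieces $M_o(\epsilon)$ and $N_j(\epsilon)$ via $\epsilon$-regularity and smooth convergence away from singular points, and invoke Bando's energy quantization \cite{ban} for the $L^2$-curvature decay in the necks. The paper's write-up is terser and simply cites \cite{ban} for both the tree formation and the neck energy control, whereas you sketch the heuristics (termination by definite energy per bubble, patching of local charts via gauges on the flat cone), but the structure and key inputs are identical.
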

\begin{proof}
Let $(M_i,g_i) \xrightarrow[GH]{} (M_o,g_o)$ be a sequence of manifolds satisfying the assumptions of Theorem \ref{dégénérescence dim 4}. Up to taking a subsequence, for any singular point of $M_o$, there exists a sequence of scales $T_i>0$ and of points $p_{i}$ of $ M_i^4$ of high curvature $|\textup{Rm}(g_i)|(p_{i})\rightarrow\infty$ such that: $\left(M^4_{i},\frac{g_{i}}{T_{i}},p_{i}\right)$ converging smoothly on compact subsets to $(N,g_b,p)$ a Ricci-flat ALE orbifold without its singular points. The convergence is moreover smooth at a bounded from below distance from the singular points since we are considering Einstein metrics with bounded curvature and harmonic radii bounded from below.

Let $\epsilon>0$ and $l\in \mathbb{N}$, by considering the compact $N_j(\epsilon)$, there exists $i_j\in \mathbb{N}$ such that for all $i>i_j$, there is a diffeomorphism on its image $\Phi_i: N(\epsilon)\to M_i$ with $\Big\|\frac{\Phi_i^*g_{i}}{T_{i}}-g_{b_j}\Big\|_{C^l} \leq \epsilon$ and similarly, there exists a rank $i_o$ from which we have the $C^l$-closeness on $M_o(\epsilon)$. Since there are only finitely many scales in each tree of singularities, we can take the maximum of the ranks $i_j$ and $i_o$ to obtain the result.

The control of the $L^2$-norm of the Riemannian curvature is proven in \cite{ban}.
\end{proof}

The main objective of this paper will be to construct good coordinates in the neck regions $\mathcal{A}_{k}(t,\epsilon)$. These regions $\mathcal{A}_{k}(t,\epsilon)$ between $N_k$ and $N_j$ are included in metric annuli $A\Big(\frac{1}{8}T_{k}^\frac{1}{2}\epsilon^{-1},8T_{j}^\frac{1}{2}\epsilon\Big)$ centered at $p_{k}$, and in $A\Big(\frac{1}{8}t_{k}^\frac{1}{2}\epsilon^{-1},8\epsilon\Big)$ if $N_k$ is directly glued to $M_o$ (in this case, $t_k = T_k$ because it is the shallowest scale). Let us note that the volume growth is almost Euclidean on them.

\begin{lem}\label{volume cone}
    For any $0<\delta<1$, there exists $\epsilon>0$ such that if a manifold $(M,g)$ is $\epsilon$-approximated by a naïve desingularization $g^D_t$ in $C^0$-norm for $t<\epsilon^4$ with $\Ric(g)\geqslant -3g$, then, if $\mathcal{A}_{k}(t,\epsilon)\subset A\Big(\frac{1}{8}T_{k}^\frac{1}{2}\epsilon^{-1},8T_{j}^\frac{1}{2}\epsilon\Big) = A(\rho_1,\rho_2)$, we have
    $$\Big|\frac{\vol_g(B_g(\delta^{-1}\rho_2))}{\vol_g(B_g(\delta\rho_1))}-\frac{(\delta^{-1}\rho_2)^4}{(\delta\rho_1)^4}\Big|\leqslant\delta.$$
\end{lem}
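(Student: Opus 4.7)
The plan is to combine Bishop-Gromov volume comparison with an explicit volume estimate on the naïve desingularization model $g^D_t$, and then transfer the estimate to $g$ through the $\epsilon$-approximation diffeomorphism of Proposition~\ref{description dégénérescence}. Since $\Ric(g)\geq -3g$, the Bishop-Gromov function $r\mapsto \vol_g(B_g(p,r))/V_{-3}(r)$ is nonincreasing, where $V_{-3}(r)=\omega_4 r^4(1+\mathcal{O}(r^2))$ for small $r$ is the volume of a ball of radius $r$ in the $4$-dimensional space form with $\Ric=-3g$.

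I would take the centre $p$ of the balls to correspond, through the approximating diffeomorphism, to the apex $p_k$ of the cone $\mathbb{R}^4/\Gamma_k$ on the model side. The key observation is that the cone $\mathbb{R}^4/\Gamma_k$ controls the geometry of $g^D_t$ not only over the neck $\mathcal{A}_k(t,\epsilon)$ but over the whole metric annulus $A(T_k^{1/2},T_j^{1/2})$: on the outer side, the orbifold expansion $|g-g_e|_{g_e}=\mathcal{O}(r_e^2)$ from Definitions~\ref{orb Ein}~and~\ref{def orb ale} yields $\vol_{g^D_t}(B_{g^D_t}(p_k,r))=(\omega_4/|\Gamma_k|)r^4\bigl(1+\mathcal{O}(r^2/T_j)\bigr)$ for $r\ll T_j^{1/2}$, and on the inner side the ALE expansion $|g_{b_k}-g_e|_{g_e}=\mathcal{O}(r_e^{-4})$ at infinity, after rescaling by $T_k$, yields $\vol_{g^D_t}(B_{g^D_t}(p_k,r))=(\omega_4/|\Gamma_k|)r^4\bigl(1+\mathcal{O}((T_k^{1/2}/r)^4\log(r/T_k^{1/2}))\bigr)$ for $r\gg T_k^{1/2}$. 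Since $\delta\rho_1=\delta T_k^{1/2}/(8\epsilon)$ and $\delta^{-1}\rho_2=8\epsilon T_j^{1/2}/\delta$, choosing $\epsilon$ small enough in terms of $\delta$ places both radii strictly inside this cone regime and makes both relative errors small.

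The $\epsilon$-approximation then pulls these model estimates back to $\vol_g$: one has $C^l$-closeness on the bubble pieces $\mathcal{N}_k(\epsilon)$ and $\mathcal{N}_j(\epsilon)$ (or on $\mathcal{M}_o(\epsilon)$), and $C^0$-closeness on $\mathcal{A}_k(t,\epsilon)$, while Bishop-Gromov monotonicity fills the small transition windows around $T_k^{1/2}$ and $T_j^{1/2}$ where these pieces meet, by sandwiching $\vol_g(B_g(r))/V_{-3}(r)$ between its values at the reliable scales immediately above and below. Dividing the two estimates at $\delta\rho_1$ and $\delta^{-1}\rho_2$, the common cone constant $\omega_4/|\Gamma_k|$ cancels and one recovers the announced approximate identity. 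The main technical difficulty is uniformity across the bubble tree, in particular in $t_k$: since the only constraint on $t_k$ is $t_k<\epsilon^4$, the modulus $\rho_2/\rho_1=64\epsilon^2/\sqrt{t_k}$ may be arbitrarily large, and the cancellation of the cone constant in the ratio has to be genuine and not merely qualitative, which forces one to rely on the explicit polynomial decay rates in the ALE and orbifold expansions rather than on the bare $C^0$-closeness produced by the $\epsilon$-approximation.
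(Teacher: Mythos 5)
Your overall strategy shares the key computation with the paper: use the orbifold asymptotics ($|g_o - g_e| = \mathcal{O}(r_e^2)$) and the ALE asymptotics ($|g_b - g_e| = \mathcal{O}(r_e^{-4})$) to show that both $\vol(B(\delta\rho_1))$ and $\vol(B(\delta^{-1}\rho_2))$ carry the same cone density $\omega_4/|\Gamma_k|$, which cancels in the ratio. Where you diverge is in the transfer mechanism from the model $g^D_t$ to the actual metric $g$, and this is where the paper is both shorter and sharper: it observes that the relevant balls are Gromov--Hausdorff close to explicit model balls and then invokes \emph{Colding's volume continuity theorem} \cite{col} under the lower Ricci bound $\Ric\geq -3g$, which converts GH-closeness of balls directly into closeness of their volumes with no further work.

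Your transfer mechanism has a genuine gap. First, you write that the $\epsilon$-approximation gives ``$C^0$-closeness on $\mathcal{A}_k(t,\epsilon)$,'' but by the definition of a manifold $\epsilon$-approximated in $C^l$-norm, the neck region only carries an $L^2$-curvature bound $\int_{\mathcal{A}_k}|\Rm|^2\,dv_g<\epsilon^2$ --- there is no $C^0$-control of $\Phi^*g - g^D_t$ there. Second, Bishop--Gromov monotonicity only yields a one-sided estimate: it gives an \emph{upper} bound on $\vol(B(\delta^{-1}\rho_2))/V_{-3}(\delta^{-1}\rho_2)$ in terms of the inner radius, but no lower bound. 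For the lower bound you would need to control the volume of $B(T_j^{1/2}\epsilon)$, which contains the neck; the $C^0$-closeness only controls the outer annulus $A(T_j^{1/2}\epsilon, \delta^{-1}\rho_2)\subset\mathcal{N}_j(\epsilon)$, leaving a relative deficit of order $(\delta/8)^4$. Since the ratio $(\delta^{-1}\rho_2)^4/(\delta\rho_1)^4$ is unbounded as $t_k\to 0$, an error of relative size $(\delta/8)^4$ produces an \emph{absolute} error far exceeding the required $\delta$; the sandwich does not close. What you are missing is precisely the ingredient that converts GH-closeness across the neck into volume closeness without pointwise metric control --- i.e., Colding's theorem, or a Cheeger--Colding-type almost-rigidity statement. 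With that input (applied to the whole ball rather than piece by piece) your argument would go through, but it would then essentially coincide with the paper's proof.
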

\begin{proof}
    Let us consider the case of an annular region $\mathcal{A}_k(t,\epsilon)$ between a Ricci-flat ALE $(N,g_b)$ and an orbifold $(M_o,g_o)$, which are asymptotic to $\mathbb{R}^4\slash\Gamma$, at scale $0<t<\epsilon^4$ for $\epsilon>0$ which we will choose small enough. By definition of an $\epsilon$-approximated metric, the ball $(B(\delta^{-1}\rho_2),g)$ is arbitrarily close in the Gromov-Hausdorff sense to the ball of radius $\delta^{-1}\epsilon$ of $(M_o,g_o) $ for $\epsilon$ arbitrarily small, and by definition of an orbifold metric, as $\delta^{-1}\epsilon\to 0$, we have $$\frac{\vol_{g_o}(B_{g_o}(\delta^{-1}\epsilon))}{(\delta^{-1}\epsilon)^4}\to \frac{\omega_4}{|\Gamma|},$$
    the volume of the unit ball of $\mathbb{R}^4\slash\Gamma$. 
    
    Similarly, the ball $(B_{g/t}(\delta\epsilon^{-1}),g/t)$ is arbitrarily close in the Gromov-Hausdorff sense to the ball of radius $\delta\epsilon^{-1}$ of $(N,g_b)$ whose volume satisfies for $\delta^{-1}\epsilon\to 0$
    $$\frac{\vol_{g_b}(B_{g_b}(\delta\epsilon^{-1}))}{(\delta\epsilon^{-1})^4}\to \frac{\omega_4}{|\Gamma|}.$$
    As a consequence, by the continuity of volume for the Gromov-Hausdorff distance between manifolds with $\Ric\geqslant- 3g$, \cite{col}, we have the stated result for $\epsilon$ small enough.
\end{proof}

The goal is now to construct a diffeomorphism from a flat cone of $\big(\mathbb{R}^n\slash\Gamma,g_e\big)$ to our intermediate annulus. Thanks to the controls on the curvature of \cite[Proposition 3]{ban}, we expect that a "natural" coordinate system must also have good enough controls. Here we choose to foliate our annuli by constant mean curvature hypersurfaces before constructing coordinates relying on them.

	\section{First coordinates}
The main difficulty here in constructing our coordinates when compared to the context of \cite{bkn} is that we do not have an asymptotic behavior of our metric on which we can base our construction. We will need to start by using a first set of local coordinates which is only partially satisfactory. The main goal of this section is the proof of the following proposition.

\begin{prop}\label{Premières coordonnées}
		For all $\delta >0$, $D_0>0$, $v_0>0$, $E>0$ and $l\in\mathbb{N}$, there exists $\epsilon_2>0$ such that if an Einstein manifold satisfies:
	\begin{itemize}
	\item its diameter is bounded by $D_0>0$,
	\item its volume is bounded below by $v_0>0$,
	\item its Ricci curvature is bounded below by $-3$,
	\item there exists an annulus $A(\rho_1,\rho_2)$ with $4\rho_1<\rho_2$ satisfying: 
	$$\int_{A(\rho_1,\rho_2)}|\Rm|^{2} \leqslant \epsilon_2^2,$$
	and $\int_{B(\rho_2)}|\Rm|^{2} \leqslant E$,
	\item the volume growth in the annulus is almost Euclidean $$\Big|\frac{\vol(B(\rho_2))}{\vol(B(\epsilon_2\rho_1))}-\frac{\rho_2^4}{(\epsilon_2\rho_1)^4}\Big|\leqslant\epsilon_2. $$
	\end{itemize}
	Then, for any $\rho\in \big[2\rho_1,\frac{1}{4}\rho_2\big]$ there exists a region $\hat{A}\big(\rho,2\rho\big)$ close to the annulus $A\big(\rho,2\rho\big)$ in the following sense,
	$$A\big((1+\delta)\rho,(2-\delta)\rho\big) \subset \hat{A}\big(\rho,2\rho\big)\subset A\big((1-\delta)\rho,(2+\delta)\rho\big),$$ 
	such that there exists $\Gamma$ a finite subgroup of $SO(4)$ and a diffeomorphism $$\Phi_\rho : A_{e}(1,2)\subset \mathbb{R}^4\slash\Gamma\to \hat{A}\big(\rho,2\rho\big)\subset M,$$
	for which we have :
	$$\Big\|\frac{\Phi_\rho^*g}{\rho^2}-g_e\Big\|_{C^{l}(A_{e}(1,2))}\leqslant \delta.$$
	\end{prop}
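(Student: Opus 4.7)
The plan is to argue by contradiction and reduce the global statement to a scale-by-scale statement obtained from Anderson's $\epsilon$-regularity, then patch the resulting local charts together. Suppose the proposition fails: there exist $\delta_0 > 0$, fixed constants $D_0, v_0, E$, an integer $l$, and a sequence of Einstein manifolds $(M_i, g_i)$ with annuli $A_i(\rho_1^i, \rho_2^i)$ satisfying the hypotheses with $\epsilon_2 = 1/i$, for which no diffeomorphism as stated exists.

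First, at any intermediate scale $\rho \in [2\rho_1^i, \rho_2^i/4]$, the sub-annulus $A(\rho, 2\rho)$ still satisfies $\int |\Rm|^2 \leq \epsilon_2^2 \to 0$, and the volume-pinching hypothesis combined with Bishop-Gromov yields almost Euclidean volume ratios at this scale. Anderson's $\epsilon$-regularity then provides a pointwise bound $|\Rm|(x) \leq C_i/\rho^2$ with $C_i \to 0$ on a slightly shrunk annulus, together with a harmonic radius bounded below by a fixed multiple of $\rho$ at interior points. Setting $\tilde g_\rho := \rho^{-2} g_i$, the rescaled region $(A(1,2), \tilde g_\rho)$ has bounded geometry with $|\Ric(\tilde g_\rho)| \leq 3\rho^2 \to 0$ and is arbitrarily close to Ricci-flat in $C^l$.

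A subsequential smooth limit of these rescalings is a Ricci-flat Riemannian manifold with Euclidean volume ratios. By Cheeger-Colding's almost-volume-cone implies almost-metric-cone (applicable because $\Ric \geq -3g$ and volume ratios are pinched), the limit is a flat metric cone, and in dimension $4$ a flat cone with smooth cross-section must be $\mathbb{R}^4/\Gamma$ for some finite subgroup $\Gamma \subset SO(4)$ acting freely on $S^3$. Connectedness of the annulus together with preservation of the topological type $S^3/\Gamma$ of the cross-section spheres under smooth convergence forces $\Gamma$ to be constant along the annulus.

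To assemble a single diffeomorphism $\Phi: A_e(2\rho_1, \rho_2/2) \to \hat A$, I would cover $[2\rho_1, \rho_2/2]$ by finitely many dyadic scales $\rho_k := 2^k \rho_1$, construct harmonic coordinates $\Phi_k: A_e(1, 4) \to M_i$ at each scale realizing the scale-$\rho_k$ $C^l$-closeness, and patch them on the overlap $A_e(2, 4)$ via a Karcher mean interpolation between the two adjacent diffeomorphisms (both viewed as maps into $\mathbb{R}^4/\Gamma$ via a single reference chart). Since each $\Phi_k$ is $C^l$-close to an isometry in the rescaled metric, the Karcher mean preserves $C^l$-closeness, and the glued diffeomorphism inherits $\|\Phi_\rho^* g_i/\rho^2 - g_e\|_{C^l} \leq \delta$ at every intermediate scale $\rho \in [2\rho_1, \rho_2/4]$, contradicting the assumed failure.

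The main obstacle will be the coherent choice of frame across all scales. Each local chart at scale $\rho_k$ is canonical only up to composition with an element of the normalizer of $\Gamma$ in $SO(4)$, and a consistent gluing requires propagating a single frame choice from the innermost scale outward using the overlaps. Making the Karcher-mean construction uniform, and verifying that the estimate passes to every non-dyadic scale $\rho \in [\rho_k, 2\rho_k]$ and not only at the $\rho_k$'s, requires uniform smooth dependence of harmonic coordinates on the metric, which is standard given the pointwise curvature and harmonic radius bounds already established.
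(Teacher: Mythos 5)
Your overall strategy — compactness by contradiction, Cheeger--Colding's almost-volume-cone theorem, identification of the rescaled limit as a flat cone $\mathbb{R}^4/\Gamma$, and scale-by-scale local charts patched together — is broadly the same as the paper's. The paper packages the scale-by-scale compactness into Lemma~\ref{presque cone variété E dim 4} and then simply verifies its hypotheses, whereas you run the contradiction directly and add a more explicit Karcher-mean gluing scheme. Your gluing discussion is actually more forthcoming than the paper's, which essentially leaves the patching implicit.

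However, there is a genuine gap in the step that makes Cheeger--Colding applicable. You write that the rescaled metric satisfies $|\Ric(\tilde g_\rho)| \leqslant 3\rho^2 \to 0$. This is false: $\rho$ ranges over the fixed interval $[2\rho_1^i, \rho_2^i/4]$ with $\rho_2^i$ bounded only by $D_0$, so $\rho^2$ does not tend to zero as $i\to\infty$ — at the outer scales $\rho \sim \rho_2$ the rescaled Ricci bound is only $O(D_0^2)$, which is not small. The hypothesis of Cheeger--Colding's Lemma~\ref{almost volume cone} requires $\Ric \geqslant -(n-1)\kappa r^{-2}$ with $\kappa$ small, i.e.\ $|\Lambda|\rho^2 \ll 1$, and you cannot get this from the ambient bound $|\Ric|\leqslant 3$ alone. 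The paper handles this with a short but crucial computation you omit: because $g$ is Einstein with $\Ric = \Lambda g$ and the annulus has volume comparable to $\rho_2^4$, the $L^2$ curvature bound $\int_{A(\rho_1,\rho_2)}|\Rm|^2 \leqslant \epsilon_2^2$ forces $|\Lambda| \leqslant C\epsilon_2 \rho_2^{-2}$, whence $|\Lambda|\rho^2 \leqslant C\epsilon_2 (\rho/\rho_2)^2 \leqslant C\epsilon_2 \to 0$ uniformly over $\rho \in [\rho_1, \rho_2/4]$. Without this step your rescaled sequence does not satisfy the almost-nonnegative-Ricci hypothesis at the larger scales, and the invocation of Cheeger--Colding there is unjustified. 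A related imprecision: you assert that Anderson's $\epsilon$-regularity gives $|\Rm| \leqslant C_i/\rho^2$ with $C_i\to 0$, but the version stated in the paper only gives $|\Rm|\leqslant 1/\rho^2$; the decay of the constant is again obtained in the paper by a compactness argument (in Lemma~\ref{presque cone variété E dim 4}), not by the $\epsilon$-regularity inequality alone. Both issues are repairable, but as written the proposal skips the one quantitative ingredient that distinguishes the Einstein setting from a mere Ricci lower bound.
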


\subsection{$\epsilon$-regularity in the neck regions}

\begin{defn}[Hölder spaces]\label{norme holder}
	The $C^{k,\alpha}$-norms will be taken at the scale of the injectivity radius. Which means that we define the $C^{k,\alpha}$-norm of a tensor $s$ in the following way:
	$$\|s\|_{C^{k,\alpha}}:= \|s\|_{C^{0}}+...+\|\nabla_g^ks\|_{C^{0}} + \sup_x[\nabla_g^ks]_{C^\alpha(g)}(x),$$
	where, for a tensor $u$, a point $x$, $\alpha>0$ and a metric $g$, if we denote $\exp_x$ the exponential map at $x$ whose injectivity radius is $\textup{inj}_g(x)$, we define the Hölder seminorm of $u$ on $M$ as
    $$ [u]_{C^\alpha(g)}(x):= \sup_{\{y\in T_xM,|y|< \textup{inj}_g(x)\}} \Big| \frac{\exp_x^*u(0)-\exp_x^*u(y)}{|y|^\alpha} \Big|_{\exp_x^*g}.$$
\end{defn}

We will also sometimes use the \emph{harmonic radius} which is defined at $x\in M$, as the supremum of the $r>0$ for which there exists a diffeomorphism $\Phi: B(x,r)\to \mathbb{R}^n$, $\Phi(y)=(h_1(y),...,h_n(y))$ such that for all $i\in\{1,...,n\}$, $$\Delta_g h_i =0,$$
and denoting $g_{ij}$ the push forward of $g$ by $\Phi$,
$$ \|g_{ij}-\delta_{ij}\|_{C^0}+ r^{1+\alpha} \|g_{ij}\|_{C^{1+\alpha}}\leqslant \frac{1}{100}.$$

\begin{lem}\label{contrôle du rayon d'injectivité}
	Under the assumptions of Proposition \ref{Premières coordonnées}, there exists a constant $r_0>0$ only depending on $D_0$ and $v_0$, such that for all points of $A\big(2\rho_1,\frac{1}{2}\rho_2\big)$ at distance $2\rho_1<\rho<\frac{1}{2}\rho_2$ from the center, the injectivity radius and the harmonic radius are bounded below by $r_0 \rho$.
\end{lem}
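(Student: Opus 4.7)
The plan is to rescale to unit scale and apply Anderson's $\epsilon$-regularity for Einstein four-manifolds. For a point $p$ at distance $\rho \in [2\rho_1, \rho_2/2]$ from the centre, set $\tilde g := \rho^{-2}\,g$. Since $p\in A(2\rho_1, \rho_2/2)$, the ball $B_g(p, \rho/4)$ sits inside $A(\rho_1, \rho_2)$, so the conformal invariance of $\int|\Rm|^2$ in dimension four yields
$$\int_{B_{\tilde g}(p, 1/4)} |\Rm(\tilde g)|^2\, dv_{\tilde g} \;=\; \int_{B_g(p, \rho/4)} |\Rm(g)|^2\, dv_g \;\leqslant\; \epsilon_2^2.$$
The Einstein condition together with the bound $\rho \leqslant D_0$ also gives $|\Ric(\tilde g)| \leqslant C(D_0)$.

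Volume non-collapsing at unit scale then follows from Bishop-Gromov with $\Ric(g) \geqslant -3g$: the monotonicity of $r\mapsto \vol_g(B_g(p,r))/V_{-3,4}(r)$ between $r=\rho/4$ and $r=D_0$, combined with the global lower bound $\vol_g(M)\geqslant v_0$ and the Euclidean asymptotics $V_{-3,4}(r) \sim \omega_4 r^4$ as $r \to 0$, gives
$$\vol_g(B_g(p,\rho/4)) \;\geqslant\; \frac{v_0}{V_{-3,4}(D_0)}\, V_{-3,4}(\rho/4) \;\geqslant\; c(D_0, v_0)\,\rho^4,$$
so $\vol_{\tilde g}(B_{\tilde g}(p, 1/4)) \geqslant c(D_0, v_0) > 0$.

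With these three ingredients in place, I apply Anderson's $\epsilon$-regularity for Einstein metrics: for $\epsilon_2$ chosen small enough in terms of $D_0$ and $v_0$, the bounded Ricci curvature, the definite non-collapsing, and the small $L^2$-energy on $B_{\tilde g}(p, 1/4)$ together force a pointwise bound $|\Rm(\tilde g)| \leqslant C$ on $B_{\tilde g}(p, 1/8)$. Cheeger's lemma then provides a uniform lower bound on the injectivity radius of $\tilde g$ at $p$, and Anderson's harmonic-coordinates theorem upgrades this to a lower bound $\geqslant r_0(D_0, v_0) > 0$ on the harmonic radius. Rescaling back by $\rho$ yields the two claimed bounds for $g$.

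The main obstacle is to ensure that the constants in the $\epsilon$-regularity step depend only on $D_0$ and $v_0$, and not on the large-energy bound $E$ or on finer features of the annular region. This reduces to a careful inspection of the Bochner-type identity satisfied by $|\Rm|$ for Einstein metrics: once $L^2$ smallness and non-collapsing are in force, Moser iteration produces a pointwise curvature bound whose constants depend only on the Ricci bound and the non-collapsing constant, exactly as required.
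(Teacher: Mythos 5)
Your proof follows essentially the same route as the paper's: both establish scale-invariant volume non-collapsing via Bishop--Gromov, apply the Anderson/Bando--Kasue--Nakajima $\epsilon$-regularity theorem to turn small $L^2$-energy plus non-collapsing into a pointwise curvature bound $|\Rm| \lesssim \rho^{-2}$, and then derive the injectivity-radius lower bound from Cheeger--Gromov--Taylor and the harmonic-radius bound from the curvature and injectivity controls. The rescaling to unit scale is a purely notational reorganization of the same scale-invariant argument, and your closing remark about tracing the constants through Moser iteration correctly identifies why $r_0$ depends only on $D_0$ and $v_0$ and not on $E$.
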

\begin{proof}
Let us consider a point $x$ at distance $\rho\in [2\rho_1,\frac{1}{2}\rho_2]$ from the center of the annulus and the ball $B(x,\frac{\rho}{2})$. Thanks to Bishop-Gromov inequality, the volume of $B(x,\frac{\rho}{2})$ is bounded from below by $C_1\rho^4$ for a constant $C_1>0$ only depending on $v_0$ and $D_0$. To control the curvature, we use the following $\epsilon$-regularity theorem:
\begin{lem}[{\cite[Theorems 4.8, 4.9 and 4.11]{gao}}]
    In an Einstein manifold $(M,g)$ satisfying the assumptions of Proposition \ref{Premières coordonnées}, for any $C_0>0$ there exists a constant $\epsilon=\epsilon(C_0,v_0,D_0)$ such that if at a point $x\in M$ and for $\rho>0$ we have $\vol_g(B_g(x,\rho))>C_0 \rho^4$ and
\begin{equation}
\int_{B(x,2\rho)}|\Rm(g)|^2 dv(g) < \epsilon^2,\label{condition L2}
\end{equation} 
then, we have
\begin{equation}
|\Rm(g)|\leqslant \frac{1}{\rho^2} \label{conclusion Linfty}
\end{equation} on $B(x,\rho)$.
\end{lem}

Therefore for $\rho\in [4\rho_1, \frac{1}{4} \rho_2]$, up to shrinking the constant $\epsilon_2$ of the statement to be smaller than $\epsilon(16C_1)>0$, the ball $B(x,\frac{\rho}{2})$ satisfies \eqref{condition L2}. Hence, the curvature at distance $\rho$ is smaller than $\frac{1}{\rho^2}$ by \eqref{conclusion Linfty}.

By \cite[Theorem 4.3]{cgt}, these controls on the curvature and the volume at scale $\rho$ imply that there exists a constant $r_1>0$ depending on the volume of the ball of radius $\rho$ and on a bound on the curvature at this scale such that at any point at distance $\rho$ from the center of the annulus, the injectivity radius is bounded below by $r_1\rho$. Since the curvature is bounded, there also exists $0<r_0<r_1$ depending on $r_1$ and the bound on the curvature such that the harmonic radius is bounded below by $r_0\rho$.
	\end{proof}

\subsection{Construction of the first coordinates}

	Let us start by constructing first coordinates which we will improve later.
	
	The crucial tool to prove this result is the almost volume cone theorem of Cheeger and Colding.
		\begin{lem}[{\cite[Theorem 4.85]{cc}}]\label{almost volume cone} 
		For all $\delta>0$, there exists $\kappa>0$ such that for any $r>0$, if $(M^n,g)$ satisfies, $\Ric(g) \geqslant -(n-1)\kappa r^{-2}$ and there exists a point $p\in M$ at which,
		$$\frac{\vol(B(p,\kappa r))}{v_{-\kappa r^{-2}}(\kappa r)}-\frac{\vol(B(p,2r))}{v_{-\kappa r^{-2}}(2r)}<\kappa,$$ 
		where $v_{-\kappa r^{-2}}(s)$ is the volume of the ball of radius $s$ in the simply connected space with sectional curvatures constant equal to $-\kappa r^{-2}$,
	    then, there exists a metric cone $(C(X),d_{C(X)},0)$ such that $diam (X)\leqslant \pi$ and,
		$$d_{GH}\Big(\big(B(p,r),p\big),\big(B_{C(X)}(0,r),0\big)\Big)\leq \delta r.$$
		\end{lem}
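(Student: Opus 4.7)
The plan is to prove the contrapositive by a compactness--contradiction argument, reducing the quantitative statement to the ``volume cone implies metric cone'' rigidity theorem of Cheeger--Colding. After rescaling $g \mapsto r^{-2}g$, I may assume $r=1$, so the hypothesis reads $\Ric(g)\geqslant -(n-1)\kappa$ together with
$$
\frac{\vol(B(p,\kappa))}{v_{-\kappa}(\kappa)} - \frac{\vol(B(p,2))}{v_{-\kappa}(2)} < \kappa,
$$
and what must be produced is a metric cone whose unit ball is $\delta$-Gromov--Hausdorff close to $B(p,1)$.

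The strategy is then to suppose the conclusion fails: there should exist $\delta_0>0$ and a sequence $(M_i^n,g_i,p_i)$ with $\kappa_i\to 0$ satisfying the hypothesis, but with $d_{GH}(B(p_i,1),B_{C(X)}(0,1)) > \delta_0$ for every metric cone $C(X)$. Since $\Ric(g_i)\geqslant -(n-1)\kappa_i\to 0$, Bishop--Gromov yields uniform volume doubling on $B(p_i,2)$, and Gromov's precompactness theorem extracts a subsequential pointed GH limit $(X_\infty,d_\infty,p_\infty)$. I would then invoke Colding's volume continuity theorem to conclude $\vol(B(p_i,s))\to \mathcal{H}^n(B(p_\infty,s))$ for every $s\in(0,2]$; since $v_{-\kappa_i}(s)\to\omega_n s^n$ uniformly on compacts, Bishop--Gromov monotonicity passes to the limit, so that $s\mapsto \mathcal{H}^n(B(p_\infty,s))/(\omega_n s^n)$ is non-increasing on $(0,2]$.

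The next step will be to upgrade the two-scale pinching to constancy of the density in the limit. For any $r_0\in (0,2)$ and $i$ large enough that $\kappa_i<r_0$, smooth monotonicity sandwiches $\vol(B(p_i,r_0))/v_{-\kappa_i}(r_0)$ between the two ratios appearing in the hypothesis, giving the same $<\kappa_i$ pinching at radius $r_0$. Sending $i\to\infty$ and then $r_0\to 0$ forces $\mathcal{H}^n(B(p_\infty,s))/(\omega_n s^n)$ to be constant on $(0,2]$. I then apply the Cheeger--Colding rigidity theorem in the singular setting: a GH limit of manifolds with $\Ric\geqslant -(n-1)\kappa_i\to 0$ whose volume density is constant on $B(p_\infty,2)$ must be isometric, on that ball, to $B_{C(X)}(0,2)$ for some metric cone $C(X)$; the bound $\operatorname{diam}(X)\leqslant \pi$ follows from the Cheng-type diameter estimate under the inherited $\Ric\geqslant 0$ condition on the link. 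Restricting to radius $1$, GH convergence of $B(p_i,1)$ to $B_{C(X)}(0,1)$ contradicts the choice of the sequence.

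The main obstacle is precisely the rigidity theorem used as a black box at the last step: that constancy of the volume density on $B(p_\infty,2)$ forces isometry to a metric cone. In the smooth setting this is just the equality case of Bishop--Gromov (the radial vector field satisfies $\nabla^2(r^2/2)=g$), but transporting this identity across a Gromov--Hausdorff limit requires the full Cheeger--Colding analytic machinery, i.e.\ the segment inequality, integral Hessian estimates on distance functions, and the approximate-splitting technique, which is the substantive content of \cite[Theorem 4.85]{cc}. Everything else in the plan reduces to standard precompactness, Bishop--Gromov monotonicity, and Colding's volume continuity theorem.
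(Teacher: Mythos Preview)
The paper does not give its own proof of this lemma: it is quoted verbatim as \cite[Theorem~4.85]{cc} and used as a black box in the proof of Lemma~\ref{presque cone variété E dim 4}. There is therefore no argument in the paper to compare your proposal against.

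As for the proposal itself, you have written the standard compactness--contradiction repackaging, and you correctly identify the crux: the step where constant $\mathcal H^n$-density on the limit forces a metric-cone structure \emph{is} the content of \cite[Theorem~4.85]{cc}. In the literature the rigid statement on Ricci limit spaces is obtained as a corollary of the quantitative theorem you are trying to prove, not as an independent input; to establish it without circularity you would have to reproduce the segment inequality and the integral Hessian estimates for the almost-radial function on the smooth approximants and then pass to the limit, which is exactly Cheeger--Colding's proof. So what you have is an accurate explanation of why the quantitative and rigid formulations are equivalent modulo Gromov precompactness and Colding's volume continuity, but not an alternative route to the result.

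One further caution: your appeal to Colding's volume continuity tacitly assumes noncollapsing, while the lemma as stated carries no lower volume bound. If the sequence collapses, $\mathcal H^n$ vanishes on the limit and your constant-density argument is vacuous, yet the Cheeger--Colding conclusion still holds with a possibly lower-dimensional cone. In the paper this is harmless because the only use of the lemma (Lemma~\ref{presque cone variété E dim 4}) comes with an explicit lower volume bound $\vol(B(p,r))\geqslant v_0 r^4$, but your outline does not cover the statement in full generality.
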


		We can refine this result in the case of $4$-dimensional Einstein manifolds.
		\begin{lem}[Almost volume cone in a $4$-dimensional Einstein manifold]\label{presque cone variété E dim 4}
		For all $\delta>0$, $l\in \mathbb{N}$, $v_0>0$, and $E>0$, there exists $\kappa>0$ such that for all $r>0$, if $(M^4,g)$ satisfies $\Ric(g) = \Lambda g$ with, $ |\Lambda|\leqslant 3\kappa r^{-2}$ and if there exists a point $p\in M$ for which, 
		$$\vol(B(p,r)) \geqslant v_0 r^4,$$
		$$\frac{\vol(B(p,\kappa r))}{v_{-\kappa r^{-2}}(\kappa r)}-\frac{\vol(B(2r))}{v_{-\kappa r^{-2}}(2r)}<\kappa,$$
		and $$\int_{B(p,2r)} |\Rm|^2dv \leqslant E,$$
		then, there exists a finite subgroup $\Gamma\subset SO(4)$ (with order bounded in terms of $v_0$), and a diffeomorphism
		$\Phi:A_e(\delta , (1-\delta )) \to \hat{A}(\delta r, (1-\delta )r) $, where,
		  \begin{equation}
		 A(2\delta r, (1-2\delta )r) \subset \hat{A}(\delta r, (1-\delta )r) \subset A\Big(\frac{1}{2}\delta r, \Big(1-\frac{1}{2}\delta \Big)r\Big), \label{arrivée}
		 \end{equation}
		 such that on the annulus $A_e(\delta r, (1-\delta )r)$, we have:
		 \begin{equation}
		 \Big\|\frac{\Phi^*g}{r^2}-g_e\Big\|_{C^l(g_e)}\leqslant \delta .\label{controle diffeo}
		 \end{equation}
		\end{lem}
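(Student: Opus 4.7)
I would argue by contradiction and compactness. Suppose no such $\kappa$ exists, producing a sequence of $4$-dimensional Einstein manifolds $(M_i, g_i, p_i)$ and scales $r_i>0$ violating the conclusion. After rescaling so that $r_i=1$, the Einstein constants satisfy $|\Lambda_i|\to 0$, the non-collapsing $\vol(B(p_i,1))\ge v_0$ persists, $\int_{B(p_i,2)}|\Rm|^2\le E$, and the almost volume cone hypothesis together with Lemma \ref{almost volume cone} gives, after extracting a subsequence, pointed Gromov-Hausdorff convergence of $(B(p_i,1),p_i)$ to the unit ball in some metric cone $(C(X),o)$ with $\operatorname{diam}(X)\le \pi$.

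\textbf{Upgrading to smooth convergence on an annular region.} On the annular region at distance $\rho \in [\delta/2,\, 1-\delta/2]$ from $p_i$, I would first establish a uniform lower bound on the harmonic radius proportional to $\rho$, exactly as in Lemma \ref{contrôle du rayon d'injectivité}: Bishop-Gromov combined with the almost-cone volume ratio and the lower volume bound provides a uniform non-collapsing estimate, and then the $\epsilon$-regularity lemma recalled above (going back to \cite{and,bkn}) yields a curvature bound $|\Rm|\le C\rho^{-2}$ once one knows that the $L^2$ curvature concentration on balls of radius comparable to $\rho$ is small. Since the total energy $E$ is fixed, the number of balls of a fixed scale $\gtrsim\delta$ on which this smallness fails is bounded independently of $i$, so by a covering argument (and possibly further shrinking $\delta$ of the final annulus to absorb bad balls into the $\delta$-neighborhood of the tip) one obtains uniform $C^{l+1,\alpha}$ bounds on $g_i$ in harmonic coordinates on $A(p_i,\delta,1-\delta)$ via the standard elliptic bootstrap for the Einstein equation $\Ric(g)=\Lambda g$ in harmonic coordinates. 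By Arzelà-Ascoli and uniqueness of the Gromov-Hausdorff limit, the $g_i$ converge in $C^{l+1,\alpha}$ to a smooth Einstein metric on the corresponding annulus of the cone $C(X)$.

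\textbf{Identification of the cone and conclusion.} Because $\Lambda_i\to 0$, the limit metric is smooth Ricci-flat on $C(X)\setminus\{o\}$. A Ricci-flat cone $dr^2+r^2 g_X$ over a $3$-dimensional base is equivalent to $\Ric_{g_X}=2g_X$, and in dimension $3$ this forces $g_X$ to have constant sectional curvature $1$, so $(X,g_X)$ is a spherical space form $\mathbb{S}^3/\Gamma$ with $\Gamma\subset SO(4)$ a finite subgroup acting freely on $\mathbb{S}^3$. The lower bound $\vol(B(p_i,1))\ge v_0$ combined with Bishop-Gromov gives an upper bound on $|\Gamma|$ in terms of $v_0$. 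The smooth convergence then produces diffeomorphisms $\Phi_i$ from an annulus of $\mathbb{R}^4/\Gamma$ onto subsets $\hat{A}$ satisfying the inclusions \eqref{arrivée} and the closeness \eqref{controle diffeo} for $i$ large, contradicting the assumption. The main technical obstacle is the uniform control of curvature concentration on the annulus: this is precisely what the $L^2$-energy bound and the almost Euclidean volume growth deliver through $\epsilon$-regularity, and it is the key input that allows the Gromov-Hausdorff convergence to be upgraded to the desired $C^l$ convergence.
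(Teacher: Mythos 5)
Your overall strategy — argue by contradiction, rescale to $r=1$, invoke Cheeger--Colding's almost volume cone theorem, and extract a limiting metric cone after a diagonal argument — matches the paper's. Your identification of the cone, via the fact that the cross-section $(X,g_X)$ of a Ricci-flat $4$-dimensional cone satisfies $\Ric_{g_X}=2g_X$ and, being $3$-dimensional, must therefore have constant curvature $1$, is a correct variant of the paper's shorter argument (which observes that the limit is an Einstein orbifold, so has bounded curvature, so by scale-invariance the cone must be flat).

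However, the regularity upgrade has a genuine gap. Your covering argument only yields that the number of balls of scale $\gtrsim\delta$ on which $L^2$ curvature concentrates is bounded; it does not locate those balls near the tip. The parenthetical fix — "possibly further shrinking $\delta$ of the final annulus to absorb bad balls into the $\delta$-neighborhood of the tip" — is not available, because $\delta$ is the given parameter of the lemma and the conclusion must hold on the prescribed annulus $A_e(\delta,1-\delta)$. If curvature concentrated at $q_i\to q$ with $d(q,p)=1/2$, your argument as written gives no contradiction. Ruling this out requires a blow-up argument: near such a $q$ (away from the vertex) the almost-cone volume convergence forces small balls to have almost-Euclidean volume (a flat cone is a smooth manifold off its vertex), so a curvature blow-up there would produce a Ricci-flat ALE space asymptotic to $\mathbb{R}^4$ itself, which is flat by the Bishop--Gromov equality case and thus carries no curvature — a contradiction. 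This bubble analysis is precisely what the Anderson/Bando--Kasue--Nakajima orbifold compactness theorem packages; the paper invokes it directly to conclude that the Gromov--Hausdorff limit is simultaneously a metric cone and an Einstein orbifold with isolated singularities, hence the flat cone $\mathbb{R}^4/\Gamma_\infty$ whose only singular point is the vertex, with smooth convergence on compact subsets of the complement. Citing that compactness theorem, as the paper does, is both cleaner and necessary to close your argument.
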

\begin{rem}
    The assumptions on the Riemannian curvature and the volume are there to bound the order of $\Gamma$. They prevent our Einstein manifolds to be too close to $\mathbb{R}^4\slash\Gamma_i$ where the order of $\Gamma_i$ goes to infinity. In such a situation, the sequence could converge to the cone $\mathbb{R}^+$. These hypotheses are moreover redundant by \cite[Theorem 1.13]{cn}, the hypotheses on the Ricci curvature and the volume imply a bound on the $L^2$-norm of the curvature.
\end{rem}		
		\begin{proof}
		By rescaling we can restrict our attention to $r =1$ and assume towards a contradiction that there exist $\delta>0$, $E>0$ and $v_0>0$, and a sequence of counterexamples $(M^4_i,g_i,p_i)$ satisfying \begin{itemize}
		    \item $\vol(B_i(p_i,1))>v_0>0$,
		    \item $\int_{B_i(p_i,2)} |\Rm_i|^2\textup{d}v_i \leqslant E$,
		    \item there exists $\kappa_i\to 0$ and $\Lambda_i$, $ |\Lambda_i|\leqslant 3\kappa_i$ for which $\Ric(g_i) = \Lambda_i g_i$ and $\frac{\vol(B_i(p_i,\kappa_i ))}{v_{-\kappa_i }(\kappa_i )}-\frac{\vol(B_i(p_i,2))}{v_{-\kappa_i }(2)}<\kappa_i$,
		\end{itemize} 
		but such that for any $\Gamma$ finite subgroup of $SO(4)$, there does not exist a diffeomorphism $\Phi:A_e(\delta , (1-\delta )) \to \hat{A}(\delta r, (1-\delta )r)$ where $\hat{A}(\delta r, (1-\delta )r)$ satisfies \eqref{arrivée}, and such that we have the control \eqref{controle diffeo}.
		
		Since $\vol(B_i(p_i,1))>v_0>0$ and $\Ric(g_i)\geqslant -3g_i$, a subsequence of the balls $B_i(p_i,1)$ converges to a metric space $(Y,d_Y,p)$ in the Gromov-Hausdorff sense. By Lemma \ref{almost volume cone}, up to taking a subsequence, for all $i$, we can assume that $\kappa_i$ is small enough to have $$d_{GH}\Big(\big(B_i(p,1),p_i\big),\big(B_{C(X_i)}(0,1),0\big)\Big)\leqslant \frac{1}{i},$$ for a particular metric cone $(C(X_i),d_{C(X_i)},0)$ with $diam (X_i)\leqslant \pi$. By the triangular inequality, the sequence $\big(B_{C(X_i)}(0,1),0\big)$ also converges to $(Y,d_Y,p)$ in the Gromov-Hausdorff sense. Now, since the $C(X_i)$ with the distances $d_{C(X_i)}\big((r,x),(s,y)\big):= \sqrt{r^2+s^2-2rs \cos(d_{X_i}(x,y))}$ converge, we deduce by fixing $r$ and $s$ that the $(X_i,d_{X_i})$ converge to $(X,d_{X})$, and therefore that the limit is a metric cone, $ (Y,d_Y,p)=(C(X),d_{C(X)},0)$.

        By the compactness theorem for noncollapsed Einstein metrics with bounded $L^2$-norm for the curvature \cite{bkn,and}, $(Y,d_Y,p)$ is an Einstein orbifold. The limit of $(B_i(p_i,1),g_i,p_i)$ in the Gromov-Hausdorff sense is therefore both the radius $1$ ball of a metric cone and of an Einstein orbifold of dimension $4$. Since an Einstein orbifold has bounded curvature, the cone has to be flat. The limit is therefore $\mathbb{R}^4\slash\Gamma_\infty$ for $\Gamma_\infty$ a finite subgroup of $SO(4)$. 
		
		Now, the convergence of Einstein metrics is smooth on compact subsets of the complement of the singularities by \cite{bkn}, and there is only one here at the singular point of $\mathbb{R}^4\slash\Gamma_\infty$, hence, there exists a diffeomorphism $\Phi:A_e(\delta , (1-\delta )) \to \hat{A}(\delta r, (1-\delta )r)$ satisfying \eqref{arrivée} and \eqref{controle diffeo}, which is a contradiction.
		\end{proof}

We can now use these results to construct some coordinates in our annulus.

\begin{proof}[Proof of Proposition \ref{Premières coordonnées}]
	Let $\delta >0$, $D_0>0$, $v_0>0$, $E>0$, $\Lambda \in [-3,3]$ and $l\in\mathbb{N}$, and a $4$-dimensional manifold, $(M,g)$ satisfiying :
	\begin{itemize}
	\item $\Ric(g) = \Lambda g$,
	\item the diameter is bounded by $D_0>0$,
	\item the volume is bounded below by $v_0>0$,
	\item there exists $p\in M$, and $\rho_2>0$, such that we have : 
	$\int_{B(p,\rho_2)}|\Rm|^{2} \leqslant E$.
	\end{itemize}
 Let us first remark that these imply a control on $\Lambda\rho_2^2$ by Bishop-Gromov inequality
\begin{align}
E&\geqslant \int_{B(\rho_2)}|\Rm|^2\textup{d}v\nonumber\\
&\geqslant 6\int_{B(\rho_2)}|\Ric|^2\textup{d}v\nonumber\\
&\geqslant 24\Lambda^2\vol(B(\rho_2))\nonumber\\
&\geqslant 24v_0\Lambda^2\frac{\vol_{g_{-|\Lambda|}}(B_{g_{-|\Lambda|}}(\rho_2))}{\vol_{g_{-|\Lambda|}}(B_{g_{-|\Lambda|}}(D_0))}\nonumber\\
&\geqslant 24v_0\Lambda^2\frac{\vol_{g_{-3}}(B_{g_{-3}}(\rho_2))}{\vol_{g_{-3}}(B_{g_{-3}}(D_0))}\nonumber\\
&\geqslant C(D_0,v_0)\Lambda^2\rho_2^4,\label{controle Lambda par E}
\end{align}
hence $\Lambda\rho^2_2\leqslant \frac{E^{1/2}}{C(D_0,v_0)^{1/2}} =: C'(E,D_0,v_0)$. 

Let $l\in \mathbb{N}$, and denote $0<\kappa_0 <\frac{1}{4}$ the constant of Lemma \ref{presque cone variété E dim 4} associated to our constants $\delta$, $l$ et $E$. We are looking for $\epsilon_2>0$ small enough to satisfy the hypotheses of Lemma \ref{presque cone variété E dim 4} with constant $\kappa_0>0$. 

For any $0<4\rho_1\leq \rho_2$ for which we have :
$$\int_{A(\rho_1,\rho_2)}|\Rm|^{2} \leqslant \epsilon_2^2,$$
and
$$\Big|\frac{\vol(B(\rho_2))}{\vol(B(\epsilon_2\rho_1))}-\frac{\rho_2^4}{(\epsilon_2\rho_1)^4}\Big|\leqslant\epsilon_2, $$
we actually have, thanks to Bishop-Gromov inequality, for
$C''(E,D_0,v_0)>0$,
\begin{equation}
    \Big|\frac{\vol(B(\rho_2))}{\vol(B(\rho_1))}-\frac{\rho_2^4}{\rho_1^4}\Big|\leqslant C''(E,D_0,v_0)\epsilon_2,\label{croissance volume rho1 rho2}
\end{equation}
and for any $\rho$ and $\kappa_0>0$ for which $ \rho_1<\kappa_0\rho<2\rho<\rho_2 $,
\begin{equation}
    \Big|\frac{\vol(B(2\rho))}{\vol(B(\kappa_0\rho))}-\frac{(2\rho)^4}{(\kappa_0\rho)^4}\Big|\leqslant C''(E,D_0,v_0)\epsilon_2.\label{croissance volume rho}
\end{equation}
We can therefore assume that $\vol(A(\rho_1,\rho_2))\geqslant c(E,D_0,v_0)\rho^4_2$ for $4\rho_1<\rho_2$ and $c(E,D_0,v_0)>0$ by choosing $\epsilon_2$ small enough. Let us then look for a bound on $\Lambda$ in order to verify the assumptions of Lemma \ref{presque cone variété E dim 4}. Since $\Ric = \Lambda g$, we have
\begin{align*}
\epsilon_2^2&\geqslant \int_{A(\rho_1,\rho_2)}|\Rm|^2\textup{d}v\\
&\geqslant 6\int_{A(\rho_1,\rho_2)}|\Ric|^2\textup{d}v\\
&\geqslant 24\Lambda^2\vol(A(\rho_1,\rho_2))\\
&\geqslant 24c(E,D_0,v_0)\Lambda^2\rho^4_2
\end{align*}
which rewrites $|\Lambda| \leqslant  \epsilon_2(24c(E,D_0,v_0))^{-1/2} \rho_2^{-2}.$ Therefore, for $\epsilon_2$ small enough, we verify the assumptions of Lemma \ref{presque cone variété E dim 4} for any $\rho$ by choosing $\epsilon_2$ for which we have $$ \kappa_0 = \epsilon_2\max((24c(E,D_0,v_0))^{-1/2},1,C''(E,D_0,v_0)), $$ because we control the volume growth between $\kappa_0 \rho$ and $2\rho$ thanks to \eqref{croissance volume rho}. The conclusion of Proposition \ref{Premières coordonnées} therefore holds for any $\rho_1<\rho<\frac{1}{4}\rho_2$ with $\delta(\kappa_0)>0$.
	\end{proof}

	\subsection{Consequences of the existence of the first coordinates.}
	    The diffeomorphism of Proposition \ref{Premières coordonnées} allows us to pull back our Einstein metric on a flat annulus $A_{e}(\rho_1, \rho_2) \subset \mathbb{R}^4\slash \Gamma$, where the Einstein metric $g$ is $C^\infty$-close to the flat metric $g_e$. This will let us construct hypersurfaces of almost constant second fundamental form for our Einstein metric.
		
	    \begin{rem}
	    Here and in the following, we will use the convention that our normal vectors point outwards, and we define the second fundamental form as $$A^\Sigma(X,Y):= \langle X,\nabla_{Y}N^\Sigma\rangle,$$ which ensures that spheres in the Euclidean space have positive second fundamental form.
	    \end{rem}
		
			\begin{lem}\label{hypersurfaces presque sphériques}
			For any $l\in\mathbb{N}$, there exists $C>0$ and $\delta_0>0$ such that if a metric $g$ on $A_e\big(\frac{1}{4},4\big)$ satisfies,
				$$\|g-g_e\|_{C^{l+1}\big(A_e\big(\frac{1}{4},4\big)\big)}\leqslant \delta_0,$$
		we have, for all $s\in \big[\frac{1}{4},4\big]$, the sphere of radius $s$ of $\mathbb{R}^n\slash \Gamma$, $\mathcal{S}_s$, has a second fundamental form for $g$, $A_g^{\mathcal{S}_s}$, which satisfies
		$$\Big\|\frac{A_g^{\mathcal{S}_s }}{s}-\frac{g^{\mathcal{S}_s}}{s^2}\Big\|_{C^{l}\big(\frac{g^{\mathcal{S}_s }}{s^2}\big)}\leqslant C\|g-g_e\|_{C^{l+1}\big(A_e\big(\frac{1}{4},4\big)\big)}.$$
		We moreover have
		$$\Big\|\frac{g^{\mathcal{S}_s }}{s^2}-g^{\mathbb{S}^{n-1}\slash \Gamma}\Big\|_{C^{l+1}(g^{\mathbb{S}^{n-1}\slash \Gamma})} \leqslant C\|g-g_e\|_{C^{l+1}\big(A_e\big(\frac{1}{4},4\big)\big)}.$$
		\end{lem}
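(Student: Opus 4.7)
The plan is to reduce to the case $s=1$ by a homothety and then use smooth dependence of the induced metric and second fundamental form on the ambient metric.

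First I would set up the scaling. For $s\in[\tfrac14,4]$, let $\phi_s:\mathbb{R}^n\slash\Gamma\to\mathbb{R}^n\slash\Gamma$ be multiplication by $s$, and put $\tilde g_s:=s^{-2}\phi_s^*g$, defined on $A_e(1/(4s),4/s)\supset A_e(\tfrac12,2)$. Since $\phi_s^*g_e=s^2g_e$, the chain rule yields
\[
\|\tilde g_s-g_e\|_{C^{l+1}(A_e(\frac12,2))}\;\le\;C_l\,\|g-g_e\|_{C^{l+1}(A_e(\frac14,4))},
\]
with $C_l$ depending only on $l$, the range of $s$ being fixed. The map $\phi_s$ restricts to a diffeomorphism $\mathcal{S}_1\to\mathcal{S}_s$, and under a conformal change $h\mapsto\lambda^2h$ the unit normal is divided by $\lambda$, so the second fundamental form is multiplied by $\lambda$. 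Combining, $\phi_s^*A^{\mathcal{S}_s}_g=s\,A^{\mathcal{S}_1}_{\tilde g_s}$ and $\phi_s^*g^{\mathcal{S}_s}=s^2\,\tilde g_s^{\mathcal{S}_1}$, so both tensors appearing in the conclusion are rescalings of unit-scale objects built from $\tilde g_s$, and the whole statement reduces to the case $s=1$.

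Next I would establish the unit-scale perturbation estimate: for $h$ a $C^{l+1}$ metric on $A_e(\tfrac12,2)$ with $\|h-g_e\|_{C^{l+1}}\le\delta_0$,
\[
\|A^{\mathcal{S}_1}_h-g_e^{\mathcal{S}_1}\|_{C^l(g_e^{\mathcal{S}_1})}+\|h^{\mathcal{S}_1}-g_e^{\mathcal{S}_1}\|_{C^{l+1}(g_e^{\mathcal{S}_1})}\;\le\;C\,\|h-g_e\|_{C^{l+1}}.
\]
The restriction bound on $h^{\mathcal{S}_1}$ is immediate. For the second fundamental form, the outward unit normal is $N_h=\nabla^h r/|\nabla^h r|_h$ and $A^{\mathcal{S}_1}_h(X,Y)=h(X,\nabla^h_Y N_h)$; explicitly $A^{\mathcal{S}_1}_h$ is a smooth algebraic function of the entries of $h$, $h^{-1}$, $\partial h$, and of the radial function $r$. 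Consequently $h\mapsto A^{\mathcal{S}_1}_h$ is a smooth nonlinear operator from an open neighborhood of $g_e$ in the Banach space of $C^{l+1}$ symmetric $2$-tensors into $C^l(\mathcal{S}_1)$; since its value at $g_e$ equals $g_e^{\mathcal{S}_1}$, the mean value inequality delivers the required Lipschitz bound.

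Applying this estimate to $h=\tilde g_s$ and translating back through the first step yields both inequalities of the lemma for every $s\in[\tfrac14,4]$; the equivalence between the reference norms $C^l(g^{\mathcal{S}_s}/s^2)$ and $C^l(g^{\mathbb{S}^{n-1}\slash\Gamma})$ needed to phrase the first bound as stated follows from the second bound once $\delta_0$ is small enough. The hard part is merely the routine bookkeeping of perturbation calculus: checking that the nonlinear operator $h\mapsto A^{\mathcal{S}_1}_h$ is genuinely smooth on the relevant Banach ball (which forces $\delta_0$ small enough to keep $\det h$ and $|\nabla^h r|_h$ bounded away from zero) and that the scaling constant $C_l$ is uniform on the bounded range $s\in[\tfrac14,4]$. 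No conceptual difficulty is expected.
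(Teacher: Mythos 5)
Your argument is correct and is essentially the same as the paper's, which simply notes that the second fundamental form is a first-order differential operator on the metric (hence controlled in $C^l$ by the $C^{l+1}$-norm of $g-g_e$) and that $g_e$ gives exactly the round values. Your explicit scaling reduction to $s=1$ and the smooth-dependence/mean-value argument are a careful unpacking of the paper's terse "we can conclude," and in particular make the uniformity in $s\in[\tfrac14,4]$ transparent.
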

		\begin{proof}
		The second fundamental form involves one derivative of the metric which is controlled in $C^{l+1}$-norm by assumption. Since for $g_e$, the hypersurfaces $\mathcal{S}_s$ have a second fundamental form constant equal to $\frac{g^{\mathbb{S}^{n-1}\slash \Gamma}}{s^2}$, we conclude that we indeed control their second fundamental form $$\Big\|\frac{A_g^{\mathcal{S}_s }}{s}-\frac{g^{\mathcal{S}_s}}{s^2}\Big\|_{C^{l}\big(\frac{g^{\mathcal{S}_s }}{s^2}\big)}\leqslant C\|g-g_e\|_{C^{l+1}\big(A_e\big(\frac{1}{4},4\big)\big)}.$$ In a similar fashion, we get the control on the metric.
		\end{proof}
		
	Given a smooth and orientable hypersurface, $\Sigma$, we define for $x\in \Sigma$ and $s \in \mathbb{R}$, $F^g(x,s):=\gamma^g_x(s)$, where $\gamma^g_x$ is the geodesic for $g$ such that $\gamma^g_x(0) = x$, and ${\gamma'}_x^g(0)$ is the normal to $\Sigma$ at $x$. We define the \emph{normal injectivity radius of $\Sigma$}, denoted $\textup{inj}_g^\perp(\Sigma)$ as the supremum of the constants $r>0$ for which the map $F^g: \Sigma\times [-r,r]\to M$ is a diffeomorphism on its image.
	
		\begin{lem}\label{injectivité normale}
		For all $l\in\mathbb{N}$ and  $\epsilon>0$, there exists $C>0$ and $\delta>0$ such that if a metric $g$ satisfies $\|g-g_e\|_{C^{l+2}\big(A_e\big(\frac{1}{4},4\big)\big)}\leqslant \delta,$
		then for any hypersurface $\Sigma \subset A_e\big(\frac{1}{2},2\big) $ satisfying $\|A^{\Sigma}_{g_e}\|_{C^{l+1}}\leqslant 10$, we have the two following properties:
 \begin{itemize}
 		\item the normal injectivity radii of $\Sigma$ for $g$ $\textup{inj}_g^\perp(\Sigma)$, and $g_e$,   $\textup{inj}_{g_e}^\perp(\Sigma)$ satisfy
 $$\Big|\frac{\textup{inj}_g^\perp(\Sigma)}{\textup{inj}_{g_e}^\perp(\Sigma)} - 1\Big| \leqslant \epsilon.$$
 
		\item the two normal exponential maps are also close to each other on the set $$\Sigma\times \big[-(1-\epsilon)\textup{inj}_{g_e}^\perp(\Sigma),(1-\epsilon)\textup{inj}_{g_e}^\perp(\Sigma)\big]:$$ there exists a vector field $X$ on $\mathbb{R}^n\slash\Gamma$, $\|X\|_{C^{l+1}(g_e)}<\epsilon$,  for which we have,
			$$F^{g}  \circ  (F^{g_e})^{-1} = \phi_X,$$
			where $\phi_X:p\mapsto \exp^{g_e}_{p}(X(p))$.
			\end{itemize}
	\end{lem}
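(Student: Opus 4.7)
The plan is to compare the two normal exponential maps $F^g$ and $F^{g_e}$ directly via continuous dependence of ODE solutions on their coefficients, using $\|g-g_e\|_{C^{l+2}}\leq \delta$ as the small parameter. The starting point is to secure a uniform lower bound $r_0>0$ on $\textup{inj}_{g_e}^\perp(\Sigma)$ depending only on the $C^{l+1}$-bound on $A^\Sigma_{g_e}$ and on the ambient region $A_e(1/2,2)$. The focal distance along $\Sigma$ is bounded below by $1/\|A^\Sigma\|_\infty\geq 1/10$, and the $C^1$-bound on $A^\Sigma$ pins down the $C^2$-geometry of $\Sigma$, so that normal lines from nearby points of $\Sigma$ stay almost parallel over distances of order $1/10$ and cannot meet in that range. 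After shrinking $r_0$ slightly one also ensures $F^{g_e}(\Sigma\times[-r_0,r_0])\subset A_e(1/4,4)$, so that the comparison with $g$ makes sense on the whole tube.

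Next I would compare unit normals and geodesics. The $g$-unit normal $N^g(x)$ is obtained from $T_x\Sigma$ by $g$-orthogonal projection and $g$-normalization, a smooth operation in the entries of $g$, so a $C^{l+2}$-perturbation of $g$ induces a $C^{l+1}$-perturbation of $N^g$ along $\Sigma$ of size $O(\delta)$. Writing the geodesic equation in local coordinates, $F^g$ then solves an ODE whose right-hand side (the Christoffel symbols of $g$) is $C^{l+1}$-close to that of $g_e$ with error $O(\delta)$, with initial velocity $C^{l+1}$-close to $N^{g_e}$ with error $O(\delta)$. Standard continuous dependence of ODE solutions on the right-hand side and on the initial data, applied on the fixed interval $[-r_0,r_0]$, yields $\|F^g - F^{g_e}\|_{C^{l+1}(\Sigma\times[-r_0,r_0])}\leq C\delta$. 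Defining $X$ on the tubular neighborhood $U:=F^{g_e}(\Sigma\times[-r_0,r_0])$ by $X(F^{g_e}(x,s)):=(\exp^{g_e}_{F^{g_e}(x,s)})^{-1}(F^g(x,s))$ gives $\|X\|_{C^{l+1}(g_e)}\leq \epsilon$ for $\delta$ small relative to $\epsilon$, and $F^g=\phi_X\circ F^{g_e}$ by construction. A standard cutoff then extends $X$ to all of $\mathbb{R}^n/\Gamma$ without spoiling the estimate.

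Finally, the injectivity radius comparison follows because $\phi_X=F^g\circ(F^{g_e})^{-1}$ is an $O(\delta)$-perturbation of the identity in $C^{l+1}$, hence remains a diffeomorphism onto its image on $U$; therefore $F^g$ is a diffeomorphism on $\Sigma\times[-(1-\epsilon)r_0,(1-\epsilon)r_0]$, yielding $\textup{inj}_g^\perp(\Sigma)\geq (1-\epsilon)\,\textup{inj}_{g_e}^\perp(\Sigma)$, and the reverse inequality follows by swapping the roles of $g$ and $g_e$. The main obstacle is the first step: extracting a uniform lower bound on $\textup{inj}_{g_e}^\perp(\Sigma)$ from the $C^{l+1}$-bound on $A^\Sigma$ alone. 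The focal-point estimate is immediate, but ruling out distant self-intersections of the normal exponential map uniformly across all admissible $\Sigma$ requires using the $C^{1}$-control on $A^\Sigma$ to realize $\Sigma$, in normal coordinates about any of its points, as a graph over its tangent plane with uniformly bounded Hessian. Once this uniform $r_0$ is in hand, the rest reduces to routine ODE perturbation theory and a quantitative inverse function theorem.
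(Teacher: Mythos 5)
Your approach matches the paper's: the paper likewise compares the geodesic ODEs for $g$ and $g_e$, notes that $\|g-g_e\|_{C^{l+2}}\leq\delta$ forces $C^{l+2}$-closeness of the unit normals and $C^{l+1}$-closeness of the connections, and then invokes continuous dependence of solutions of second-order ODEs on initial data and coefficients to produce the small vector field $X$ with $F^g=\phi_X\circ F^{g_e}$. Your deduction of the injectivity-radius ratio from $\phi_X$ being a $C^{l+1}$-small perturbation of the identity (and the swap-roles trick for the reverse inequality) is the expected way to get the first bullet from the second.

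The uniformity concern you flag as the ``main obstacle'' is genuine, and the paper's own proof does not resolve it: it says ``given a fixed hypersurface $\Sigma$,'' which yields constants depending on $\Sigma$ rather than the uniform $C,\delta$ the statement claims. A $C^{l+1}$-bound on $A^\Sigma_{g_e}$ controls the focal distance ($\geq 1/10$) and rules out local folding, which is what your graph-with-bounded-Hessian argument captures, but it does \emph{not} prevent two distant, nearly parallel portions of $\Sigma$ from lying at an arbitrarily small Euclidean distance inside $A_e\big(\tfrac12,2\big)$. In that regime $\textup{inj}_{g_e}^\perp(\Sigma)$ can be arbitrarily small, and the ratio conclusion genuinely fails for a fixed $\delta$: an $O(\delta)$ perturbation of the metric shifts the first normal self-intersection time by $O(\delta)$, so the ratio of injectivity radii moves by $O\big(\delta/\textup{inj}_{g_e}^\perp(\Sigma)\big)$, which exceeds $\epsilon$ once $\textup{inj}_{g_e}^\perp(\Sigma)\ll\delta/\epsilon$. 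So a uniform lemma in the stated generality needs an extra hypothesis, e.g.\ a lower bound on $\textup{inj}_{g_e}^\perp(\Sigma)$. In every subsequent use in the paper the hypersurfaces are $C^{1,\alpha}$-close to round spheres $\mathbb{S}^{n-1}/\Gamma$, for which $\textup{inj}_{g_e}^\perp$ is uniformly bounded below; with that added hypothesis both your argument and the paper's go through cleanly, and the rest of your write-up (the cutoff extension of $X$, the tube inclusion in $A_e\big(\tfrac14,4\big)$) is standard and correct.
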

	\begin{proof}
		The curves $F^{g}(x,.) = \gamma^{g}_x(.)$ and $F^{g_e}(x,.) = \gamma^{g_e}_x(.)$ are respectively the solutions of the geodesic equation for $g$ and $g_e$,
		\begin{equation}
  \left\{
      \begin{aligned}
        \nabla_{\gamma_x'} \gamma_x' &\;= 0,\\
        \gamma_x(0) &\;= x,\\
        \gamma_x'(0) &\;= N_x.
      \end{aligned}
    \right.
\end{equation}
where $N^{g}_x$ and $N^{g_e}_x$ are the outwards normal to $\Sigma$ for the two metrics. By assumption, the metrics $g$ and $g_e$ are $C^{l+2}$-close. This implies that $N_x^g$ and $N^{g_e}_x$ are $C^{l+2}$-close, and that $\nabla^{g}$ and $\nabla^{g_e}$ are $C^{l+1}$-close.

By continuity with respect to the initial conditions and parameters of order $2$ differential equations, given a fixed hypersurface $\Sigma$, for all $\epsilon>0$, there exists $\delta>0$ such that if $\|g-g_e\|_{C^{l+2}}\leqslant\delta$, then, the difference between the normal geodesics is small in the following $C^l$-sense since we work in local charts. There exists a vector field $X$ defined along each $\gamma^{g_e}_x$ such that $$\gamma^{g}_x(s) = \phi_X(\gamma^{g_e}_x(s)):= \exp_{\gamma^{g_e}_x(s)}\big(X(\gamma^{g_e}_x(s))\big),$$
and $\|X\|_{C^{l+1}(g_e)}<\epsilon$. Hence, the equality $\gamma^{g}_x(s) = \phi_X(\gamma^{g_e}_x(s))$ rewrites:
$$F^{g} = \phi_X\circ F^{g_e}.$$
	\end{proof}
	
	In particular, we deduce that if a hypersurface $\Sigma'$ is a small normal perturbation of $\Sigma$ for the metric $g$, then, it is also a small normal perturbation of $\Sigma$ for the metric $g_e$, and conversely.
	
			\begin{cor}\label{pert normale de chaque côté}
		For all $\epsilon_0$ $l\in\mathbb{N}$, there exists $C>0$ and $\delta_0>0$, such that for any metric $g$ satisfying,
				$$\|g-g_e\|_{C^{l+2}\big(A_e\big(\frac{1}{4},4\big)\big)}\leqslant \delta_0,$$
		and for any hypersurface $\Sigma \subset A_e\big(\frac{1}{2},2\big) $ satisfying $\|A^{\Sigma}_{g_e}\|_{C^{l+1}}\leqslant 10$,
			we have: for any function $w:\Sigma\to \mathbb{R}$ with $\|w\|_{C^l}\leqslant \epsilon_0$, and denoting $\Sigma(w):= \{F(x,w(x)), x\in \Sigma\}$,
			\begin{enumerate}
			\item there exists $w':\Sigma\to \mathbb{R}$ with $\|w'\|_{C^l}\leqslant 2\epsilon_0$ such that
			$$\Sigma_{g_e}(w) = \Sigma_g(w'),$$
			\item there exists $w'':\Sigma\to \mathbb{R}$ with $\|w''\|_{C^l}\leqslant 2\epsilon_0$ such that
			$$\Sigma_g(w) = \Sigma_{g_e}(w'').$$
			\end{enumerate}
		\end{cor}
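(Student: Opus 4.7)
The plan is to exploit Lemma \ref{injectivité normale}, which already provides the essentially only nontrivial ingredient: the identity $F^{g} = \phi_X\circ F^{g_e}$ for some vector field $X$ with $\|X\|_{C^{l+1}(g_e)}<\epsilon$ (where $\epsilon$ will be chosen small in terms of $\epsilon_0$). Using this identification, parts (1) and (2) become symmetric statements about rewriting a graph in one set of normal coordinates as a graph in the other.

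To prove (1), I first observe that
\[
\Sigma_{g_e}(w)
= \bigl\{F^{g_e}(x,w(x))\,:\,x\in\Sigma\bigr\}
= \bigl\{\phi_X^{-1}\bigl(F^{g}(x,w(x))\bigr)\,:\,x\in\Sigma\bigr\}
= \phi_X^{-1}\bigl(\Sigma_g(w)\bigr).
\]
Since $\|w\|_{C^l}\leqslant \epsilon_0$ and $\|X\|_{C^{l+1}(g_e)}<\epsilon$, for $\delta_0$ (hence $\epsilon$) sufficiently small and $\epsilon_0$ bounded by the normal injectivity radius estimate of Lemma \ref{injectivité normale}, the perturbed hypersurface $\Sigma_{g_e}(w)$ lies inside the $g$-tubular neighborhood of $\Sigma$ on which $F^g:\Sigma\times(-r_0,r_0)\to M$ is a diffeomorphism onto its image. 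I can then \emph{define} $w':\Sigma\to\mathbb{R}$ by taking the second coordinate of $(F^g)^{-1}$ applied to each point of $\Sigma_{g_e}(w)$; concretely, for each $y\in\Sigma$ the implicit function theorem yields a unique $w'(y)\in(-r_0,r_0)$ and $x(y)\in\Sigma$ such that $F^g(y,w'(y))=\phi_X^{-1}(F^g(x(y),w(x(y))))$, giving $\Sigma_g(w')=\Sigma_{g_e}(w)$.

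The $C^l$-bound on $w'$ then follows by chasing dependence through the identity $w'=(\pi_2\circ (F^g)^{-1}\circ\phi_X^{-1}\circ F^g)(\cdot,w(\cdot))$, where $\pi_2$ is the second projection on $\Sigma\times\mathbb{R}$. When $X=0$ and $\phi_X=\mathrm{id}$ this formula returns $w'=w$; hence a Taylor expansion in $X$ (or a direct contraction-mapping argument) shows
\[
\|w'-w\|_{C^l(\Sigma)}\;\leqslant\; C\,\|X\|_{C^{l+1}(g_e)}\;\leqslant\; C\epsilon,
\]
with $C$ depending only on $l$ and the $C^{l+1}$ geometry of $\Sigma$ (which is controlled by the hypothesis $\|A^\Sigma_{g_e}\|_{C^{l+1}}\leqslant 10$). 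Choosing $\delta_0$ small enough that $C\epsilon\leqslant \epsilon_0$ gives $\|w'\|_{C^l}\leqslant 2\epsilon_0$, as required. Statement (2) is proved identically by swapping the roles of $g$ and $g_e$, using $F^{g_e}=\phi_X^{-1}\circ F^g$ and the same tubular neighborhood/implicit function argument to produce $w''$.

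The one technical point worth flagging is ensuring that the tubular neighborhoods of $\Sigma$ for the two metrics genuinely overlap where we need them: this is exactly what the first bullet of Lemma \ref{injectivité normale} supplies, guaranteeing that the $g$- and $g_e$-normal injectivity radii are comparable up to a factor $1+\epsilon$. Provided $\epsilon_0$ is chosen smaller than, say, half of $\min\bigl(\mathrm{inj}^\perp_g(\Sigma),\mathrm{inj}^\perp_{g_e}(\Sigma)\bigr)$, both $\Sigma_{g_e}(w)$ and $\Sigma_g(w)$ remain inside the common diffeomorphism image of $F^g$ and $F^{g_e}$, and the implicit function step is unobstructed. This is the main place where one must be careful; everything else is a routine application of the chain rule in local charts.
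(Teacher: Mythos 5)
Your argument is correct and follows essentially the same route as the paper: both start from the identity $F^{g} = \phi_X\circ F^{g_e}$ of Lemma \ref{injectivité normale} and use the smallness of $X$ (together with the $C^{l+1}$ control of $\Sigma$ via $\|A^\Sigma_{g_e}\|_{C^{l+1}}\leqslant 10$) to re-express the graph in one set of normal coordinates as a nearby graph in the other. One small imprecision worth noting: your displayed formula $w'=(\pi_2\circ (F^g)^{-1}\circ\phi_X^{-1}\circ F^g)(\cdot,w(\cdot))$ is really a formula for $w'$ precomposed with the base reparameterization $y(x):=(\pi_1\circ (F^g)^{-1}\circ\phi_X^{-1}\circ F^g)(x,w(x))$, exactly the point the paper makes explicit by factoring the conjugated map into a near-product $(\phi^\Sigma_{X^p_\Sigma},\phi^\mathbb{R}_{X^p_\mathbb{R}})$ and writing $w'=\phi^\mathbb{R}_{X^p_\mathbb{R}}\circ w\circ(\phi^\Sigma_{X^p_\Sigma})^{-1}$; this is harmless bookkeeping and does not affect the validity of your contraction/Taylor estimate.
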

		\begin{proof}
		According to Lemma \ref{injectivité normale}, we have $F^{g} = \phi_X\circ F^{g_e}$, where $X$ is arbitrarily small in $C^{l+1}$-norm for $\delta_0$ arbitrarily small. Adding the assumption $\|A^{\Sigma}_{g_e}\|_{C^{l+1}}\leqslant 10$, ensures that for $\epsilon_0>0$, the metrics $g_{e}:=(F^{g_e})^*g_e$ and $g_{p}:=(g_e)_{|\Sigma} + ds^2$ on $\Sigma\times [-2\epsilon_0,2\epsilon_0]$,  are arbitrarily close in $C^{l}$-norm for $\epsilon_0$ arbitrarily small. This implies that
		$$\Big[ (F^{g_e})^{-1} \circ \phi_X\circ F^{g_e} \Big] = \phi^{e}_{X^{e}},$$
		where $ X^e $ is the pull-back of $X$ by $F^{g_e}$, and $\phi^{e}_{X^{e}}(p):= \exp^{g_{e}}_{p}(X^{e}(p))$. Hence, we have $\|X^e\|_{C^{l+1}(g_e)}\leqslant \epsilon_0$ by the proximity of the metrics $g_e$ and $g_p$, $ \phi^{e}_{X^{e}} = (\phi^\Sigma_{X_\Sigma^{p}},\phi^\mathbb{R}_{X_\mathbb{R}^p}) $, where $\|X_\Sigma^{p}\|_{C^l(g^\Sigma)}+ \|X_\mathbb{R}^{p}\|_{C^l(ds^2)} \leqslant 2\epsilon_0$.
		
	    Now, for all $(x,s) \in \Sigma\times [-2\epsilon_0,2\epsilon_0]$, we have
			\begin{align*}
				F^{g} (x,s) &\;= F^{g_e} \circ \Big[ (F^{g_e})^{-1} \circ \phi_X\circ F^{g_e} \Big](x,s),\\
				&\;= F^{g_e}  \circ (\phi^\Sigma_{X_\Sigma^{p}},\phi^\mathbb{R}_{X_\mathbb{R}^p})(x,s)\\
				&\;= F^{g_e} \big(\phi^\Sigma_{X_\Sigma^{p}}(x),\phi^\mathbb{R}_{X_\mathbb{R}^p}(s)\big).
			\end{align*}
			In particular, for all $w: \Sigma \to [-2\epsilon_0,2\epsilon_0]$, we have $$F^{g} (x,w(x)) = \big(\phi^\Sigma_{X_\Sigma^{p}}(x),w'( \phi^\Sigma_{X_\Sigma^{p}}(x))\big),$$
			where
			$w':=\phi^\mathbb{R}_{X_\mathbb{R}^p}\circ w\circ \big(\phi^\Sigma_{X_\Sigma^{p}}\big)^{-1}$. By the controls of $X_\mathbb{R}^p$ and $X_\Sigma^{p}$ which we can assume as small as wanted by choosing $\delta_0$ and $\epsilon_0$ small enough, we get the stated result.
		\end{proof}

\section{Controls on constant mean curvature hypersurfaces in low curvature regions}

Let us motivate the use of constant mean curvature hypersurfaces. We control both the way they are embedded in our manifold and their intrinsic geometry thanks to the ambient curvature which we will control in our neck regions by \cite[Proposition 3]{ban}.

\begin{rem}
All along this section, we will denote constant mean curvature hypersurfaces with a tilde to insist on their particularity.
\end{rem}

\begin{prop}\label{contrôle CMC}
For all $k\in \mathbb{N}$ and $0<\alpha<1$, there exists $\epsilon>0$ and $C>0$ for which, if $\tilde{\Sigma}$ is a hypersurface of a manifold $(M^n,g)$ satisfying:
\begin{enumerate}
\item its mean curvature is constant equal to $n-1$, 
\item there exists $\Gamma$ a finite subgroup of $SO(n)$, and a diffeomorphism $\Phi: \mathbb{S}^{n-1}\slash\Gamma\to \tilde{\Sigma} $ such that we have
	$$\left\|\Phi^*g^{\tilde{\Sigma}} - g^{\mathbb{S}^{n-1}\slash\Gamma}\right\|_{C^{1,\alpha}(\mathbb{S}^{n-1}\slash\Gamma)} \leqslant \epsilon.$$
	\item the ambient curvature is small on $\tilde{\Sigma}$,
	$$\|\Rm(g)\|_{C^k(g)}\leqslant \epsilon,$$
	\end{enumerate}
	then:
	\begin{itemize}
		\item the second fundamental form is almost constant at the scale of curvature,
		$$\big\|A^{\tilde{\Sigma}} - g^{\tilde{\Sigma}}\big\|_{C^{k,\alpha}(g^{\tilde{\Sigma}})}\leqslant C\|\Rm(g)\|_{C^k(g)},$$	
	\item there exists a diffeomorphism $\Psi: \mathbb{S}^{n-1}\slash \Gamma \to \tilde{\Sigma}$ such that,
	$$\big\|\Psi^*g^{\tilde{\Sigma}} - g^{\mathbb{S}^{n-1}\slash\Gamma}\big\|_{C^{k+1,\alpha}(\mathbb{S}^{n-1}\slash\Gamma)} \leqslant C\|\Rm(g)\|_{C^k(g)}.$$
		\end{itemize}
\end{prop}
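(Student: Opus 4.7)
The plan is a contradiction-and-rescaling argument, driven by elliptic regularity for the second fundamental form and the rigidity of CMC hypersurfaces in a flat ambient. First I would record the two fundamental equations that $A^{\tilde{\Sigma}}$ satisfies. Since $H=n-1$ is constant, the contracted Codazzi identity yields
\[ \mathrm{div}_{\tilde{\Sigma}}(A^{\tilde{\Sigma}}) = \Ric^M(N,\cdot)|_{T\tilde{\Sigma}}, \]
whose right-hand side is of size $O(\|\Rm(g)\|_{C^0})$. Simons' identity for CMC hypersurfaces then furnishes an elliptic equation of the form
\[ \Delta_{\tilde{\Sigma}} A^{\tilde{\Sigma}} = -|A^{\tilde{\Sigma}}|^2 A^{\tilde{\Sigma}} + H\, A^{\tilde{\Sigma}} \circ A^{\tilde{\Sigma}} + T\bigl(\Rm(g), \nabla \Rm(g)\bigr), \]
where $T$ is a universal expression in the ambient curvature and its first derivative. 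Together with the trace condition $\mathrm{tr}\,A^{\tilde{\Sigma}} = n-1$, this is an elliptic system for the tensor $A^{\tilde{\Sigma}} - g^{\tilde{\Sigma}}$.

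The first estimate is then proven by contradiction. Assuming it fails, one produces a sequence $(M_i^n, g_i, \tilde{\Sigma}_i)$ satisfying the three hypotheses with $\epsilon_i := \|\Rm(g_i)\|_{C^k(g_i)} \to 0$, yet $\lambda_i := \|A^{\tilde{\Sigma}_i} - g^{\tilde{\Sigma}_i}\|_{C^{k,\alpha}}$ satisfies $\lambda_i / \epsilon_i \to \infty$. A preliminary qualitative step, using the $C^{1,\alpha}$-control of the induced metrics and passing to a limit in a flat ambient via geodesic normal coordinates (in which $g_i$ is $C^{k+1,\alpha}$-close to $g_e$ by hypothesis (3)), combined with Alexandrov rigidity on the universal cover $\mathbb{R}^n$, forces $\lambda_i \to 0$. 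Setting $B_i := (A^{\tilde{\Sigma}_i} - g^{\tilde{\Sigma}_i})/\lambda_i$, one has $\|B_i\|_{C^{k,\alpha}} = 1$, $\mathrm{div}(B_i) = O(\epsilon_i/\lambda_i) \to 0$, and a rescaled Simons equation for $B_i$ whose source term is $O(\lambda_i + \epsilon_i/\lambda_i) \to 0$.

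Via the diffeomorphism $\Phi_i$ from hypothesis (2), a subsequence of $\Phi_i^* B_i$ converges in $C^{k-1,\alpha'}(\mathbb{S}^{n-1}/\Gamma)$, for some $\alpha' < \alpha$, to a symmetric, trace-free, divergence-free tensor $B_\infty$ solving the linearised Simons equation on the round sphere. By Alexandrov's theorem, the only closed CMC hypersurfaces in $\mathbb{R}^n/\Gamma$ near the round unit sphere are translates of it, and $A - g^{\tilde{\Sigma}}$ vanishes identically on each such translate; hence any variation preserving the CMC condition yields a trivial variation of $A - g^{\tilde{\Sigma}}$, and so $B_\infty \equiv 0$. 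Schauder estimates applied to the rescaled equation then give
\[ \|B_i\|_{C^{k,\alpha}} \leq C \bigl( \|B_i\|_{C^0} + O(\lambda_i + \epsilon_i/\lambda_i) \bigr) \to 0, \]
contradicting $\|B_i\|_{C^{k,\alpha}} = 1$.

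For the metric estimate, the Gauss equation combined with the newly obtained control on $A^{\tilde{\Sigma}}$ yields the intrinsic sectional curvature of $(\tilde{\Sigma}, g^{\tilde{\Sigma}})$ in $C^{k-1,\alpha}$, close to the sectional curvature $1$ of the round sphere. Harmonic coordinates on $(\tilde{\Sigma}, g^{\tilde{\Sigma}})$ then provide the improved diffeomorphism $\Psi$ with the stated $C^{k+1,\alpha}$-closeness. The principal technical difficulty is the non-trivial kernel of the linearised Simons operator on the round sphere when $\Gamma = \{1\}$, corresponding to translations. This kernel is harmless here because translating a round sphere preserves the identity $A = g^{\tilde{\Sigma}}$, so it contributes nothing to the variation of $A - g^{\tilde{\Sigma}}$ and no explicit gauge fixing is required to obtain $B_\infty = 0$.
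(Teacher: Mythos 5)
Your proposal is in the same spirit (elliptic theory for the second fundamental form of a CMC hypersurface near a round sphere) but takes a structurally different and, as written, incomplete route. The paper does not argue by contradiction or compactness: it uses the \emph{full} Codazzi identities, namely $d^{\nabla^{\tilde\Sigma}}A_0^{\tilde\Sigma}(X,Y,Z)=g(\Rm(X,Y)N,Z)$ together with $\delta^{\nabla^{\tilde\Sigma}}A_0^{\tilde\Sigma}=\Ric(N,\cdot)$, and views $d^{\nabla}+\delta^{\nabla}$ as a \emph{first-order} elliptic operator on trace-free symmetric $2$-tensors. A Weitzenb\"ock computation shows this operator has trivial kernel on $\mathbb{S}^{n-1}$, from which it has a bounded right inverse in $W^{1,p}$; the $k=0$ estimate follows by Sobolev embedding, and higher $k$ by a Gauss-equation bootstrap against the appendix lemma on optimal $C^{k,\alpha}$-closeness to the round metric. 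Two things go wrong in your version. First, your crucial step $B_\infty\equiv 0$ is not justified: you are given that $B_\infty$ is trace-free, divergence-free, and solves a \emph{linearized Simons equation}, but the argument you offer (translations of the round sphere preserve $A=g^{\tilde\Sigma}$) only concerns the kernel of the Jacobi operator $\Delta+(n-1)$ acting on normal perturbations $w$, not the kernel of a tensor-valued second-order operator on TT tensors; there is no reason $B_\infty$ arises as the variation of $A-g^{\tilde\Sigma}$ along a one-parameter family of CMC hypersurfaces in a fixed ambient. What actually closes this step is the \emph{full} Codazzi identity: since $d^{\nabla}A_0 = \Rm(\cdot,\cdot)N = O(\epsilon_i)$, the rescaled tensors also satisfy $d^{\nabla}B_i\to 0$, so $B_\infty$ lies in $\ker(d^{\nabla}+\delta^{\nabla})$ among trace-free tensors, which is trivial by the Bochner argument. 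You only invoked the \emph{contracted} Codazzi identity, so you lose exactly the equation $d^{\nabla}B_\infty=0$ that makes the kernel statement accessible.

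Second, there is a regularity mismatch in the final Schauder step. Your hypotheses only control the induced metric in $C^{1,\alpha}$, hence the Christoffel symbols of $g^{\tilde\Sigma}$ only in $C^{0,\alpha}$; the connection Laplacian in the Simons equation is a genuinely second-order operator whose lower-order coefficients involve $\partial\Gamma$, which is not H\"older at this level of regularity, so interior Schauder does not apply directly to the rescaled equation, and the claimed bound $\|B_i\|_{C^{k,\alpha}}\le C(\|B_i\|_{C^0}+o(1))$ is not justified. The paper's first-order system $d^{\nabla}+\delta^{\nabla}$ has coefficients controlled at the $C^{0,\alpha}$ level, which is exactly enough for $L^p\to W^{1,p}$ estimates by freezing coefficients, and avoids this issue entirely. (There is also a minor issue for $k=0$: the convergence space ``$C^{k-1,\alpha'}$'' you name is not meaningful.) In short, the contradiction/rescaling strategy could be repaired, but only by reintroducing the uncontracted Codazzi identity and working with the first-order system, at which point one is essentially doing the paper's direct argument.
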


Let us precise the notations we will use for this proof:
	\begin{itemize}
		\item $\nabla^{\tilde{\Sigma}} $: the Levi-Civita connection of the hypersurface,
		\item $\nabla$: the Levi-Civita connection of the ambient space,
		\item $N^{\tilde{\Sigma}}$: the outwards normal to the hypersurface,
		\item $A^{\tilde{\Sigma}}(X,Y):= \langle X, \nabla_Y N^{\tilde{\Sigma}}\rangle$: the second fundamental form of the hypersurface which will be seen as a symmetric $2$-tensor of the tangent space to $\tilde{\Sigma}$,
		\item $H^{\tilde{\Sigma}}$: the mean curvature of the hypersurface,
		\item for $B$ a $1$-form on $\tilde{\Sigma}$ taking values in $T^*\tilde{\Sigma}$, we define 
				$$(d^{\nabla^{\tilde{\Sigma}}} B)(X,Y):= \nabla^{\tilde{\Sigma}}_XB(Y,.)-\nabla^{\tilde{\Sigma}}_YB(X,.),$$
		\item we will denote $\delta^{\nabla^{\tilde{\Sigma}}}$ the formal adjoint of $d^{\nabla^{\tilde{\Sigma}}}$.
	\end{itemize}
	
	We have the following classical formulas (see for instance \cite[Chapter 4]{pet}) for the second formula for any hypersurface $\Sigma$: for all vector fields $X,Y,Z$ on $\Sigma$,
	$$d^{\nabla^{\Sigma}} A^{\Sigma}(X,Y,Z)= g\big(\Rm(X,Y)N^{\Sigma},Z\big),$$
	and
	$$\delta^{\nabla^{\Sigma}} A^{\Sigma}(X) =\Ric\big(N^{\Sigma},X\big) - dH(X).$$
	Denoting $A_0^{\tilde{\Sigma}}$ the trace-free part of $A^{\tilde{\Sigma}}$ for $g^{\tilde{\Sigma}}$, if $\tilde{\Sigma}$ is a constant mean curvature hypersurface, by the above formulas, we even have
	\begin{equation}
	    d^{\nabla^{\tilde{\Sigma}}} A_0^{\tilde{\Sigma}}(X,Y,Z)= g\big(\Rm(X,Y)N^{\tilde{\Sigma}},Z\big),\label{equatino seconde forme fondamentale 1}
	\end{equation}
	and
	\begin{equation}
	    \delta^{\nabla^{\tilde{\Sigma}}} A_0^{\tilde{\Sigma}}(X) =\Ric\big(N^{\tilde{\Sigma}},X\big).\label{equatino seconde forme fondamentale 2}
	\end{equation}
	The right hand sides being well controlled, we control $A_0^{\tilde{\Sigma}}$ by noticing that the operator $d^{\nabla^{\tilde{\Sigma}}}+\delta^{\nabla^{\tilde{\Sigma}}}$ is elliptic.
	\begin{lem}\label{elliptique et noyau sur la sphère}
		The operator $d^{\nabla^{\tilde{\Sigma}}}+\delta^{\nabla^{\tilde{\Sigma}}}$ acting on symmetric $2$-tensors is elliptic. 
		Moreover, for any metric $g$ sufficiently $C^0$-close to $g^{\mathbb{S}^{n-1}}$, the operator $d^{\nabla^{\mathbb{S}^{n-1}}}+\delta^{\nabla^{\mathbb{S}^{n-1}}}$ is injective on the symmetric $2$-tensors which are traceless for $g$.
	\end{lem}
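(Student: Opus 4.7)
The plan decomposes into an ellipticity check, a kernel computation on the round sphere, and a perturbation step for metrics near $g^{\mathbb{S}^{n-1}}$.

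For ellipticity I would compute the principal symbols directly. At a nonzero covector $\xi$ and a symmetric $2$-tensor $h$, the symbol of $d^{\nabla^{\tilde{\Sigma}}}$ sends $h$ to the $3$-tensor $(X,Y,Z)\mapsto\xi(X)h(Y,Z)-\xi(Y)h(X,Z)$, and that of $\delta^{\nabla^{\tilde{\Sigma}}}$ sends $h$ to the $1$-form $X\mapsto -h(\xi^\sharp,X)$. If both symbols vanish, setting $X=\xi^\sharp$ in the first identity and using the symmetry of $h$ forces $h=c\,\xi\otimes\xi$ for some scalar $c$; the second identity then forces $c=0$. Hence the joint symbol is injective and the operator is elliptic.

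For the injectivity on the round sphere, let $h$ be a symmetric $2$-tensor, trace-free for $g^{\mathbb{S}^{n-1}}$, with $d^{\nabla^{\mathbb{S}^{n-1}}}h=\delta^{\nabla^{\mathbb{S}^{n-1}}}h=0$. The Codazzi condition combined with the symmetry of $h$ makes the $3$-tensor $T_{ijk}:=\nabla_i h_{jk}$ totally symmetric, while the trace-free and divergence-free conditions make every contraction of $T$ vanish. A short Bochner computation, using the explicit curvature $R_{ijkl}=g_{ik}g_{jl}-g_{il}g_{jk}$ of $\mathbb{S}^{n-1}$, gives the eigentensor identity $\nabla^*\nabla h=(n-1)h$. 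I would then invoke the classical rigidity statement that Codazzi tensors on a compact space of constant positive curvature are parallel; combined with the fact that parallel symmetric $2$-tensors on $\mathbb{S}^{n-1}$ are multiples of $g^{\mathbb{S}^{n-1}}$, trace-freeness forces $h=0$. Consequently the kernel on all of $S^2 T^*\mathbb{S}^{n-1}$ is exactly the line $\mathbb{R}\cdot g^{\mathbb{S}^{n-1}}$.

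For $g$ that is $C^0$-close to $g^{\mathbb{S}^{n-1}}$, observe that the operator itself is unchanged since it depends only on $\nabla^{\mathbb{S}^{n-1}}$, while the condition of being $g$-traceless is a continuous linear constraint. As $\operatorname{tr}_{g^{\mathbb{S}^{n-1}}}g^{\mathbb{S}^{n-1}}=n-1\neq 0$, the quantity $\operatorname{tr}_g g^{\mathbb{S}^{n-1}}$ remains nonzero for $g$ in a $C^0$-neighborhood of $g^{\mathbb{S}^{n-1}}$; the one-dimensional kernel therefore has trivial intersection with the $g$-traceless subspace, giving the claimed injectivity. The main obstacle I anticipate is the rigidity step on the sphere: the naive Bochner identity only exhibits $h$ as an eigentensor of $\nabla^*\nabla$ at the level $(n-1)$, and actually ruling out such eigentensors subject to the Codazzi and divergence-free constraints requires either the Codazzi rigidity theorem—whose proof rests on a more refined integration by parts exploiting the total symmetry and tracelessness of $T=\nabla h$—or an explicit $SO(n)$-isotypic decomposition of trace-free symmetric $2$-tensors on $\mathbb{S}^{n-1}$.
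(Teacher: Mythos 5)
Your ellipticity argument is in substance the same symbol computation as the paper's, so no comment there. For the kernel on the round sphere, however, you have a sign error in the Bochner step. The paper applies Besse's Weitzenb\"ock formula [12.69], $\delta^{\nabla}d^{\nabla}+d^{\nabla}\delta^{\nabla} = \nabla^*\nabla - \mathring{\R} + (\cdot\circ\Ric)$, and on $\mathbb{S}^{n-1}$ the curvature terms give, for $h$ traceless in the kernel,
$$\nabla^*\nabla h + (n-1)h = 0,$$
not $\nabla^*\nabla h = (n-1)h$. Since $\nabla^*\nabla$ is nonnegative, a single integration by parts forces $h=0$ directly; your worry that the Bochner identity ``only exhibits $h$ as an eigentensor at level $(n-1)$'' is a symptom of the wrong sign, and the Codazzi rigidity theorem you then bring in is superfluous. (That rigidity statement is also overstated: Codazzi tensors on $\mathbb{S}^{n-1}$ with nonconstant trace need not be parallel — one has to impose constant trace, which your $h$ does satisfy, so the application happens to land correctly.)

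Your treatment of the perturbation step is a genuinely different route, but as written it has a gap. You assert that ``consequently'' the full kernel of $d^{\nabla}+\delta^{\nabla}$ on $S^2T^*\mathbb{S}^{n-1}$ is exactly $\mathbb{R}\,g^{\mathbb{S}^{n-1}}$, but this does not follow from the traceless computation: $d^{\nabla}$ and $\delta^{\nabla}$ do not preserve the pointwise splitting $S^2=S^2_0\oplus\mathbb{R}g$ (e.g.\ $\delta^{\nabla}(fg)=-df$ couples the pieces), and the pure-trace summand is an infinite-dimensional function space, so there is no codimension-one argument to invoke. One can close the gap by taking the trace of the Weitzenb\"ock identity applied to $h=h_0+fg$, which yields $\nabla^*\nabla f=0$, hence $f$ constant, and then $h_0=0$; but the paper sidesteps the full-kernel computation entirely. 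It simply observes that if $h$ is $g$-traceless and $\|g-g^{\mathbb{S}^{n-1}}\|_{C^0}\le\epsilon$ then $|\operatorname{tr}^{\mathbb{S}^{n-1}}h|\le C\epsilon|h|$, and feeds this into the Weitzenb\"ock inequality to get $0\ge\|\nabla h\|^2+(n-1-C\epsilon)\|h\|^2$, forcing $h=0$ for $\epsilon$ small. That quantitative version is shorter, self-contained, and is what gets used downstream.
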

	\begin{proof}	
	    It is well-known that the operator $d^{\nabla}+\delta^{\nabla}$ is elliptic. According to \cite[12.69]{bes}, the associated Laplacian: $-\hat{\Delta}:= \delta^{\nabla} d^{\nabla}+d^{\nabla} \delta^{\nabla}$ takes the following form for a symmetric $2$-tensor $h$,
		$$-\hat{\Delta}^{\tilde{\Sigma}}h = (\nabla^{\tilde{\Sigma}})^*\nabla^{\tilde{\Sigma}}h -\mathring{\R}^{\tilde{\Sigma}}h+ h\circ \Ric^{\tilde{\Sigma}}, $$ 
		where $\mathring{\R}$ is the action of curvature on symmetric $2$-tensors: for an orthonormal basis $(e_i)_i$,
		$$\mathring{\R}(h)(X,Y)= \sum_i h\big(\Rm(e_i,X)Y,e_i\big),$$
		or $\mathring{\R}(h)_{ij}:= \Rm_{iklj} h_{mn}g^{km}g^{ln}$ in local coordinates. Hence, on the unit sphere of dimension $n-1$, we have $\Rm^{\mathbb{S}^{n-1}}_{iklj} = g_{ij}g_{kl}-g_{il}g_{kj}$, and therefore $\mathring{\R}^{\mathbb{S}^{n-1}}(h)_{ij}=tr(h)g_{ij} - h_{ij}$, and $\alpha\circ \Ric^{\mathbb{S}^{n-1}} = (n-2)\alpha$. This gives
		$$-\hat{\Delta}^{\mathbb{S}^{n-1}}h = (\nabla^{\mathbb{S}^{n-1}})^*\nabla^{\mathbb{S}^{n-1}}h +(n-1)h - tr(h)\mathrm{Id}.$$
		
		Let $h_0$ be a $C^{1,\alpha}(\mathbb{S}^{n-1})$ symmetric $2$-tensor whose trace for $g^{\mathbb{S}^{n-1}}$ vanishes and which is in the kernel of the operator. The trace of $h_0$ vanishes so we have,
		$$ (\nabla^{\mathbb{S}^{n-1}})^*\nabla^{\mathbb{S}^{n-1}}h_0 +(n-1)h_0 = 0.$$
		Integrating by parts against $h_0$ yields
		\begin{equation}
		\|\nabla h_0\|_{L^2}^2+ (n-1)\|h_0\|_{L^2}^2 =0,\label{L2 sphere}
		\end{equation}
		and finally $h_0 = 0$. The operator $d^{\nabla^{\mathbb{S}^{n-1}}}+\delta^{\nabla^{\mathbb{S}^{n-1}}}$ is therefore injective on symmetric $2$-tensors whose trace for $g^{\mathbb{S}^{n-1}}$ vanishes.
		\\
		
		Let us now consider a metric $g$ close to $g^{\mathbb{S}^{n-1}}$, that is, assume that there exists $\epsilon>0$ which we will choose small enough such that
		$$\|g-g^{\mathbb{S}^{n-1}}\|_{C^0(\mathbb{S}^{n-1})}\leq \epsilon.$$
		We want to show that the operator $d^{\nabla^{\mathbb{S}^{n-1}}}+\delta^{\nabla^{\mathbb{S}^{n-1}}}$ is also invertible on tensors whose trace vanishes for $g$. Let $h$ be a symmetric $2$-tensor  which is traceless for $g$. By proximity of the metrics, for $\epsilon$ small enough, there exists $C=C(n)$ for which
		\begin{equation}
		|tr^{\mathbb{S}^{n-1}} h|_{\mathbb{S}^{n-1}} \leqslant C\epsilon |h|_{\mathbb{S}^{n-1}}.\label{trace}
		\end{equation}
		We still have
		$$-\hat{\Delta}^{\mathbb{S}^{n-1}}h = (\nabla^{\mathbb{S}^{n-1}})^*\nabla^{\mathbb{S}^{n-1}}h +(n-1)h - tr^{\mathbb{S}^{n-1}}(h)\textup{Id},$$
		and thanks to \eqref{trace}, if $ \big(d^{\nabla^{\mathbb{S}^{n-1}}}+\delta^{\nabla^{\mathbb{S}^{n-1}}}\big) h = 0$, then
		$$0 \geqslant \|\nabla^{\mathbb{S}^{n-1}}h\|^2 +(n-1-C\epsilon)\|h\|^2,$$
		which gives $h =0$ if $\epsilon$ is small enough to have $n-1>C\epsilon$. We finally conclude that $d^{\nabla^{\mathbb{S}^{n-1}}}+\delta^{\nabla^{\mathbb{S}^{n-1}}}$ is injective on symmetric $2$-tensors whose trace for $g$ vanishes.
	\end{proof}
	
	\begin{cor}\label{est d + delta}
        For any $1\leqslant p<+\infty$ and $l\in \mathbb{N}$, there exists $\epsilon>0$ such that if $\Sigma$ is a hypersurface of a Riemannian manifold $(M^n,g)$ for which there exists $\Gamma$ a finite subgroup of $SO(n)$, and a diffeomorphism $\Phi : \mathbb{S}^{n-1}\slash\Gamma\to \tilde{\Sigma} $ such that we have 
	    $$\left\|\Phi^*g^{\tilde{\Sigma}} - g^{\mathbb{S}^{n-1}\slash\Gamma}\right\|_{W^{l+1,p}(\mathbb{S}^{n-1}\slash\Gamma)} \leqslant \epsilon,$$
	    then, there exists $C>0$ such that for any symmetric $2$-tensor $u$ on $\tilde{\Sigma}$ whose $g^{\tilde{\Sigma}}$-trace vanishes, we have,
	    $$\|u\|_{W^{l+1,p}(\mathbb{S}^{n-1}\slash\Gamma)}\leqslant C\Big(\big\|\delta^{\nabla^{\tilde{\Sigma}}}u\big\|_{W^{l,p}(\mathbb{S}^{n-1}\slash\Gamma)}+\big\|d^{\nabla^{\tilde{\Sigma}}}u\big\|_{W^{l,p}(\mathbb{S}^{n-1}\slash\Gamma)}\Big).$$
	\end{cor}
	\begin{proof}
	We have the following control: for all $p\geqslant 1$, there exists a constant $C>0$ such that for all symmetric $2$-tensor $u$ on $\mathbb{S}^{n-1}$ we have
	$$\Big\| \Phi^*\big(d^{\nabla^{\tilde{\Sigma}}}+\delta^{\nabla^{\tilde{\Sigma}}}\big)u-\big(d^{\nabla^{\mathbb{S}^{n-1}}}+\delta^{\nabla^{\mathbb{S}^{n-1}}}\big)u\Big\|_{L^{p}(\mathbb{S}^{n-1}\slash\Gamma)} \leqslant C\epsilon\left\|u\right\|_{W^{1,p}(\mathbb{S}^{n-1}\slash\Gamma)}.$$
	
	The conclusion comes then from estimates for injective Fredholm operators which are consequences of the open mapping theorem between Banach spaces.
	\end{proof}

\begin{proof}[Proof of Proposition \ref{contrôle CMC}]
	Let $0<\alpha<1$, and $p > \frac{n}{1-\alpha}$. Then, $W^{1,p}(g^{\tilde{\Sigma}})$ embeds continuously in $C^\alpha(g^{\tilde{\Sigma}})$, which means that there exists $C_1 =C_1(p,\alpha,n)>0$ such that for all symmetric $2$-tensor $u$, we have:
	$$\|u\|_{C^\alpha(g^{\tilde{\Sigma}})}\leq C_1\|u\|_{W^{1,p}(g^{\tilde{\Sigma}})}.$$
	
	Define on $\mathbb{S}^{n-1}$ the operators, $P:=\delta^{\nabla^{\mathbb{S}^{n-1}}}+d^{\nabla^{\mathbb{S}^{n-1}}}$ and $P':= \Phi^*\big( \delta^{\nabla^{\tilde{\Sigma}}}+d^{\nabla^{\tilde{\Sigma}}}\big)$ between the Banach spaces $W^{1,p}(\mathbb{S}^{n-1})$ and $L^p(\mathbb{S}^{n-1})$. According to Lemma \ref{elliptique et noyau sur la sphère}, $P$ is injective on traceless symmetric $2$-tensors. Thanks to the expressions \eqref{equatino seconde forme fondamentale 1} and \eqref{equatino seconde forme fondamentale 2} of the operator $\delta^\nabla+d^\nabla$ applied to $A^{\tilde{\Sigma}}_0$, by Corollary \ref{est d + delta}, we have,	
	\begin{align*}
	\big\|A_0^{\tilde{\Sigma}}\big\|_{W^{1,p}(g^{\tilde{\Sigma}})}&\;\leqslant C\left(\big\|\delta^{\nabla^{\tilde{\Sigma}}}A_0^{\tilde{\Sigma}}\big\|_{L^{p}(g^{\tilde{\Sigma}})}+\big\|d^{\nabla^{\tilde{\Sigma}}}A_0^{\tilde{\Sigma}}\big\|_{L^{p}(g^{\tilde{\Sigma}})}\right) \\
	&\;= C\left(\big\| \Rm(.,.)N^{\tilde{\Sigma}}\big\|_{L^{p}(g^{\tilde{\Sigma}})} + \big\|\Ric(N^{\tilde{\Sigma}}) \big\|_{L^{p}(g^{\tilde{\Sigma}})}\right)
	\end{align*}
	Since the hypersurface has bounded volume (actually close to the volume of $\mathbb{S}^{n-1}\slash \Gamma$), there exists $C_2 = C_2(p,n)>0$ such that we have
	$$\|\Ric(N^{\tilde{\Sigma}}) \|_{L^{p}(g^{\tilde{\Sigma}})}\leqslant C_2 \| \Rm(.,.)N^{\tilde{\Sigma}}\|_{C^{0}(g^{\tilde{\Sigma}})} ,$$
	and
	$$\| \Rm(.,.)N^{\tilde{\Sigma}}\|_{L^{p}(g^{\tilde{\Sigma}})}\leqslant C_2 \| \Rm(.,.)N^{\tilde{\Sigma}}\|_{C^{0}(g^{\tilde{\Sigma}})}.$$
	Hence, by continuous embedding of $W^{1,p}(g^{\tilde{\Sigma}})$ in $C^\alpha(g^{\tilde{\Sigma}})$, we have
	$$\|A_0^{\tilde{\Sigma}}\|_{C^\alpha(g^{\tilde{\Sigma}})}\leqslant 2C_1C_2C \| \Rm(.,.)N^{\tilde{\Sigma}}\|_{C^{0}(g^{\tilde{\Sigma}})},$$
	which is the stated result for $k=0$.
	\\
	
	Let us finally explain how to control the higher levels of regularity by a bootstrap argument once we have found optimal coordinates on our hypersurface $\tilde{\Sigma}$. Assume that we have the following controls for $l\in \mathbb{N}$, on the hypersurface $\tilde{\Sigma}$: there exists $C>0$ such that,
	\begin{enumerate}
	\item $\|\Rm(g)\|_{C^{l+1}(g)}\leqslant \epsilon$,
	\item $\big\|A^{\tilde{\Sigma}} - g^{\tilde{\Sigma}}\big\|_{C^{l,\alpha}(g^{\tilde{\Sigma}})}\leqslant C\|\Rm(g)\|_{C^{l}(g)}$,
	\item there exists a diffeomorphism $\Psi:\mathbb{S}^{n-1}\slash\Gamma\to \tilde{\Sigma}$ with $$\left\|\Psi^*g^{\tilde{\Sigma}} - g^{\mathbb{S}^{n-1}\slash\Gamma}\right\|_{C^{l+1,\alpha}(\mathbb{S}^{n-1}\slash\Gamma)} \leqslant C\|\Rm(g)\|_{C^{l}(g)}.$$
	\end{enumerate}
	Notice that at this point, we have proven the estimates for $l=0$. Gauss equation gives:
	\begin{align}
	 g(\Rm^{\tilde{\Sigma}}(X,Y)Z,W) =&\; \;  g(\Rm(X,Y)Z,W)  \nonumber\\
	 &\;+  A^{\tilde{\Sigma}}(X,Z)A^{\tilde{\Sigma}}(Y,W)-A^{\tilde{\Sigma}}(Y,Z)A^{\tilde{\Sigma}}(X,W).\label{gauss eq}
	\end{align}
	And controls 1 and 2 above imply therefore that there exists $C=C(\alpha,l,n)>0$ which gives
	$$\big\|\Ric(g^{\tilde{\Sigma}})-(n-1)g^{\tilde{\Sigma}}\big\|_{C^{l,\alpha}(g^{\tilde{\Sigma}})}\leqslant C \|\Rm(g)\|_{C^{l+1}(g)},$$
	and we can use Proposition \ref{contrôles optimaux sphere} proven in the appendix.
	\begin{prop} \label{contrôles optimaux sphere}
Let $(M,g_0)$ be an $n$-dimensional manifold with sectional curvatures equal to $1$, and $k\in\mathbb{N}$.
    Then, there exists $\delta>0$ and $C>0$, such that if a metric $g$ on $M$ satisfies:
$$\|g-g_0\|_{C^{1,\alpha}(g_0)}\leqslant \delta$$
    then, there exists a diffeomorphism $\Phi: M\to M$, such that we have,
    $$\|\Phi^*g-g_0\|_{C^{k+1,\alpha}(g_0)}\leqslant C\big\|\Ric(g)-(n-1)g\big\|_{C^{k-1,\alpha}(g_0)}.$$
    \end{prop}
	Hence, in our case, there exists a diffeomorphism (independent of $l$) $\tilde{\Psi}_{\tilde{\Sigma}}: \mathbb{S}^{n-1}\slash\Gamma \to \tilde{\Sigma}$ with
	$$\left\|\tilde{\Psi}_{\tilde{\Sigma}}^*g^{\tilde{\Sigma}} - g^{\mathbb{S}^{n-1}\slash\Gamma}\right\|_{C^{l+2,\alpha}(\mathbb{S}^{n-1}\slash\Gamma)} \leqslant C\|\Rm(g)\|_{C^{l+1}(g)}.$$
	This diffeomorphism combined to the controls in $C^{l+1}$-norm on the curvature imply by Corollary \ref{est d + delta} (just like in the proof with $k=0$) that there exists a constant $C'=C'(l+1,\alpha,n)>0$ such that $$\big\|A^{\tilde{\Sigma}} - g^{\tilde{\Sigma}}\big\|_{C^{l+1,\alpha}(g^{\tilde{\Sigma}})}\leqslant C'\|\Rm(g)\|_{C^{l+1}(g)}.$$
	We therefore have the property for $l+1$ and the control on the metric $\tilde{\Psi}_{\tilde{\Sigma}}^*g^{\tilde{\Sigma}}$ improves thanks to Proposition \ref{contrôles optimaux sphere} and we can iterate this argument for all $l\leqslant k-1$.
\end{proof}

\section{Perturbation of hypersurfaces with almost constant second fundamental form}

During all of our construction, without loss of generality by rescaling, we will always reduce our construction to the case where the mean curvature is constant equal to $n-1$. Let us first show that we can perturb a hypersurface with almost constant second fundamental form to a constant mean curvature hypersurface.
\begin{prop}\label{Existence pert CMC}
For all $n\in \mathbb{N}$ and $0<r_0< \frac{1}{6}$, there exists $C>0$ and $0<\epsilon<\frac{r_0}{C}$ such that for any hypersurface $\Sigma$ diffeomorphic to $\mathbb{S}^{n-1}\slash\Gamma$, for $\Gamma \neq \{\mathrm{Id}\}$, of a manifold $(M,g)$ satisfying:
	\begin{enumerate}
		\item the normal injectivity radius of $\Sigma$ and the injectivity radius of the points of $\Sigma$ in $M$ are bounded below by $3r_0$,
		\item $\big\|A^\Sigma - g^\Sigma\big\|_{C^{\alpha}(g^\Sigma)}\leqslant \epsilon$,
		\item on the annulus $\{x,d^M_g(x,\Sigma)<3r_0\}$, we have $\|\Rm(g)\|_{C^2(g)}<\epsilon$,
	\end{enumerate}
	
	Then, there exists a unique function $w$ satisfying $\|w\|_{C^{2,\alpha}(g^\Sigma)}\leqslant C\epsilon$ and such that we have $$H^{\Sigma(w)} \equiv n-1,$$ where we denoted $\Sigma(w):= \{\gamma_x(w(x)), x\in \Sigma\}$, where $\gamma_x$ is the geodesic with $\gamma_x(0) =x$, and $\gamma'_x(0)$ is the unit outwards normal to $\Sigma$. We moreover have the following control,
	$$\|w\|_{C^{2,\alpha}(g^\Sigma)}\leqslant C \big\|H^{\Sigma}-(n-1)\big\|_{C^\alpha(g^\Sigma)}.$$
\end{prop}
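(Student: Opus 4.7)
The plan is to solve the equation $F(w) := H^{\Sigma(w)} - (n-1) = 0$ by a quantitative inverse function theorem applied to $F$ as a smooth map $C^{2,\alpha}(\Sigma) \to C^{\alpha}(\Sigma)$. First, $F(0)$ is already small: since $H^\Sigma = \mathrm{tr}_{g^\Sigma} A^\Sigma$ and $\mathrm{tr}_{g^\Sigma}(g^\Sigma) = n-1$, the hypothesis gives $\|F(0)\|_{C^\alpha} = \|H^\Sigma - (n-1)\|_{C^\alpha} \leq (n-1)\|A^\Sigma - g^\Sigma\|_{C^\alpha} \leq (n-1)\epsilon$, and this quantity is precisely the right-hand side appearing in the sharp estimate at the end of the statement.

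Next, one computes the linearization of $F$ at $w=0$. The classical first-variation formula for the mean curvature along a normal graph yields
$$dF_0 \cdot w \;=\; L^\Sigma w \;:=\; -\Delta^{g^\Sigma} w - \bigl(|A^\Sigma|_{g^\Sigma}^2 + \Ric_g(N^\Sigma, N^\Sigma)\bigr) w.$$
Under the hypotheses, the zeroth-order coefficient $|A^\Sigma|^2 + \Ric(N,N)$ is $C^\alpha$-close to $n-1$ (since $A^\Sigma \sim g^\Sigma$ and the ambient curvature is tiny), and by Gauss equation combined with Proposition \ref{contrôles optimaux sphere}, $(\Sigma, g^\Sigma)$ admits a parametrization in which $g^\Sigma$ is $C^{2,\alpha}$-close to the round metric $g^{\mathbb{S}^{n-1}/\Gamma}$. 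Consequently $L^\Sigma$ is a small $C^\alpha$-perturbation of the model operator $L_0 := -\Delta^{\mathbb{S}^{n-1}/\Gamma} - (n-1)$.

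The crux of the proof is the invertibility of $L_0$, which is the step where the hypothesis $\Gamma \neq \{e\}$ becomes essential. The spectrum of $-\Delta$ on the round $\mathbb{S}^{n-1}$ is $\{k(k+n-2) : k \geq 0\}$, and the eigenvalue $n-1$ occurs only at $k=1$, whose eigenspace consists of the restrictions of linear functions $x \mapsto \langle a, x\rangle$. Such a function descends to $\mathbb{S}^{n-1}/\Gamma$ if and only if $a$ is fixed by every element of $\Gamma$; but $\Gamma$ acts freely on $\mathbb{S}^{n-1}$, so any $\gamma \neq e$ fixes no unit vector, hence no nonzero vector in $\mathbb{R}^n$. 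When $\Gamma \neq \{e\}$ this forces $\ker L_0 = \{0\}$, and self-adjointness upgrades this to an isomorphism $C^{2,\alpha}(\mathbb{S}^{n-1}/\Gamma) \to C^{\alpha}(\mathbb{S}^{n-1}/\Gamma)$. By perturbation, $L^\Sigma$ is then also invertible with an operator-norm bound on $(L^\Sigma)^{-1}$ depending only on $n$, $\Gamma$, and $r_0$.

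With uniform invertibility of $L^\Sigma$ in hand, write $F(w) = F(0) + L^\Sigma w + Q(w)$, where $Q$ collects the nonlinear remainder. The controls $\|\Rm\|_{C^2} \leq \epsilon$ and $\mathrm{inj}^{\perp}(\Sigma) \geq 3r_0$ yield uniform $C^{2,\alpha}$ bounds on the coefficients in the exponential-graph expression for $H^{\Sigma(w)}$, hence a quadratic estimate $\|Q(w_1) - Q(w_2)\|_{C^\alpha} \leq C(\|w_1\|_{C^{2,\alpha}} + \|w_2\|_{C^{2,\alpha}})\|w_1 - w_2\|_{C^{2,\alpha}}$ on a small $C^{2,\alpha}$-ball. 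A Banach fixed point argument applied to $w \mapsto -(L^\Sigma)^{-1}(F(0) + Q(w))$ then produces a unique $w$ in the ball $\{\|w\|_{C^{2,\alpha}} \leq C\|F(0)\|_{C^\alpha}\}$, giving simultaneously existence, uniqueness, and the sharp estimate $\|w\|_{C^{2,\alpha}} \leq C\|H^\Sigma - (n-1)\|_{C^\alpha}$. The main obstacle is the invertibility step above: without $\Gamma \neq \{e\}$ the $n$-dimensional translational kernel would obstruct a uniform inverse, and one would have to impose a gauge such as centering the barycenter of $\Sigma(w)$ or proceed by Lyapunov--Schmidt reduction.
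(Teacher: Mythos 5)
Your proposal is correct and follows essentially the same route as the paper: linearize $w\mapsto H^{\Sigma(w)}$ to obtain the Jacobi operator $J_{\Sigma,M}=\Delta^\Sigma+|A^\Sigma|^2+\Ric(N,N)$, observe that under the hypotheses it is a small perturbation of $\Delta^{\mathbb{S}^{n-1}/\Gamma}+(n-1)$, invoke the fact that $-(n-1)$ is not in the spectrum of $\Delta^{\mathbb{S}^{n-1}/\Gamma}$ precisely because linear functions on $\mathbb{R}^n$ have no nonzero $\Gamma$-invariant restriction when $\Gamma\neq\{e\}$, and close with a quantitative inverse function/Banach fixed-point argument (Lemma \ref{fonctions inverses} in the paper). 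Your extra remarks---bounding $\|H^\Sigma-(n-1)\|_{C^\alpha}$ by $(n-1)\|A^\Sigma-g^\Sigma\|_{C^\alpha}$, and passing through the Gauss equation plus Proposition \ref{contrôles optimaux sphere} to put $g^\Sigma$ in a nearly round gauge---are consistent with what the paper uses implicitly, so the two proofs match in all essentials.
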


\begin{rem}
    By adapting the arguments of Section 3, we know that a hypersurface satisfying the assumptions of Proposition \ref{Existence pert CMC} is arbitrarily close (up to a diffeomorphism) in the $C^{1,\alpha}$-topology to the round $\mathbb{S}^{n-1}\slash\Gamma$.
\end{rem}

\begin{rem}
	Recall that we defined our Hölder norms at the scale of the injectivity radius, see Definition \ref{norme holder}. In our applications, the controls on the injectivity radius will be consequences of Lemma \ref{contrôle du rayon d'injectivité}, and the different curvature controls will be consequences of the previous section. 
\end{rem}

    In general, the construction of spheres with constant mean curvature can be tricky (see for example \cite{ye,px}) because it is achieved by constructing a right-inverse for the operator $\Delta_{\mathbb{S}^{n-1}}+(n-1)$, where $\Delta_{\mathbb{S}^{n-1}}$ is the Laplace-Beltrami operator (with nonpositive eigenvalues) which is not invertible. In our case, our hypersurfaces are not close to $\mathbb{S}^{n-1}$, but to $\mathbb{S}^{n-1}\slash \Gamma$ for $\Gamma \neq \{\mathrm{Id}\}$. The crucial difference it makes is that $-(n-1)$ is not an eigenvalue anymore (see \cite{ce} for the use of the same remark in the context of asymptotically conical manifolds). Indeed, the eigenfunctions with eigenvalue $-(n-1)$ are the restrictions of nonzero linear functions of $\mathbb{R}^n$ to the sphere which cannot be $\Gamma$-invariant.
	
	\begin{rem}
	    Geometrically, this comes from the fact that given $\mathbb{S}^{n-1}\subset \mathbb{R}^n$, the translations provide constant mean curvature perturbations of the spheres. In the case of $\mathbb{S}^{n-1}\slash\Gamma\subset \mathbb{R}^n\slash\Gamma$, the center cannot move as it is the fixed point of the group action.
	\end{rem}

	     \paragraph{Normal geodesic coordinates.}	
        Let us first give some controls on the geometry of the hypersurfaces equidistant to a given hypersurface $\tilde{\Sigma}_1$.
    \begin{lem}\label{seconde forme fondamentale equidistantes}
		For all $n\in\mathbb{N}$ and $0<r_0<\frac{1}{6}$, there exists $C>0$ and $0<\epsilon<\frac{r_0}{C}$ such that if a hypersurface $\tilde{\Sigma}_1$ of a manifold $(M,g)$ satisfies:
	\begin{itemize}
		\item the normal injectivity radius of $\tilde{\Sigma}_1$ and the injectivity radius of points of $\tilde{\Sigma}_1$ in $M$ are bounded below by $3r_0$,
		\item $H^{\tilde{\Sigma}_1} \equiv n-1$, and $\|A^{\tilde{\Sigma}_1}-g^{\tilde{\Sigma}_1}\|_{C^{\alpha}(g^{\tilde{\Sigma}_1})}<\epsilon$,
		\item on the annulus $\{x,d^M_g(x,\Sigma)<3r_0\}$, we have $\|\Rm\|_{C^2(g)}\leqslant \epsilon$,
	\end{itemize}
	then,
		\begin{align}
	\Big\|\frac{A^{\Sigma_s}}{s}-\frac{g^{\Sigma_s}}{s^2}\Big\|_{C^{\alpha}\big(\frac{g^{\Sigma_s}}{s^2}\big)}&\;\leqslant C\big(\|A^{\tilde{\Sigma}_1}-g^{\tilde{\Sigma}_1}\|_{C^{\alpha}g^{\tilde{\Sigma}_1}}+ |s-1|\|\Rm\|_{C^\alpha(g)} + |s-1|^2\|\Rm\|_{C^2(g)}\big)\nonumber\\
	&\;\leqslant C(1 + r_0^2)\epsilon,\label{contrôle A equidist}
	\end{align}
and
	\begin{equation}
	\Big\|\frac{H^{\Sigma_s}}{s}-\frac{n-1}{s^2}\Big\|_{C^{\alpha}\big(\frac{g^{\Sigma_s}}{s^2}\big)}\leqslant C\big(|s-1|\|\Ric\|_{C^\alpha(g)} + |s-1|^2 \|\Rm\|_{C^2(g)}\big)\leqslant C\big( r_0 + r_0^2\big)\epsilon. \label{contrôle H equidist}
	\end{equation}
	\end{lem}
	\begin{proof}
	Given a hypersurface $\Sigma_1$ whose normal injectivity radius is larger than $3r_0>0$, we define a vector field $ N$ on the region $\{x,d^M_g(x,\Sigma)<3r_0\}$ by setting $N(x)$, for $x\in\Sigma_s$, as the outwards normal vector of the hypersurface $\Sigma_s$. 
	
	Denoting, $S(X):= \nabla_X N$, for $X$ a vector field on $M$, we have
	\begin{equation}
	(\nabla_NS)(X) = - (S^2(X)+\Rm(X,N)N). \label{riccati seconde forme}    
	\end{equation}
    Assuming $\|S_{|\Sigma_1}-\mathrm{Id}\|_{C^\alpha(g^{\Sigma_1})}<\epsilon$ for $\epsilon>0$ small enough, we can follow $S$ along the normal geodesics, and integrating the previous Riccati equation \eqref{riccati seconde forme}, we find that there exits $C>0$ such that for $|s-1| \leqslant r_0 \leqslant \frac{1}{6}$,  $$\Big\|\frac{S_{|\Sigma_s}}{s} - \frac{\mathrm{Id}}{s^2} \Big\|_{C^\alpha(g^{\Sigma_s})} \leqslant \|S_{|\Sigma_1}-\mathrm{Id}\|_{C^\alpha(g^{\Sigma_1})} + C|s-1|\|\Rm_g\|_{C^\alpha(g)} + C|s-1|^2 \|\Rm_g\|_{C^2(g)}.$$ Denoting $\kappa_i(s)$ the eigenvalues of the second fundamental form of $\Sigma_s$, and $(e_i(s))$ an associated orthonormal basis of eigenvectors, if we project \eqref{riccati seconde forme} on $e_i(s)$, we get:
	$$\kappa_i'(s) =  - \left(\kappa_i(s)^2 + g\big(\Rm(N^{s},e_i(s)) e_i(s),N^s\big)\right)$$
    and finally, by summing on $i$, $H'(s) = - \big(|A|^2(s)+ \Ric(N^{s},N^{s})\big)$. With the initial condition $H(1) \equiv n-1$, we finally have the second stated estimate.
	\end{proof}

    Let us now construct coordinates in the neighborhood of a hypersurface satisfying the assumptions of Proposition \ref{Existence pert CMC}.
    \begin{lem}\label{coord geod}
        Let $\Sigma$ be a hypersurface satisfying the assumptions of Proposition \ref{Existence pert CMC}.
        Then, there exists a diffeomorphism $\Phi : A_e(1-2r_0,1+2r_0)\to \{x,d(x,\Sigma)<2r_0\}$ such that we have for $C>0$,
        $$\|\Phi^*g-g_e\|_{C^{1,\alpha}(g_e)}\leqslant C\epsilon,$$
        where $\epsilon>0$ is the constant of Proposition \ref{Existence pert CMC}.
    \end{lem}
    \begin{proof}
        Let us consider the diffeomorphism $F_g : \Sigma\times [-2r_0,2r_0]$ obtained from the outwards normal geodesic of $\Sigma$.
        
        We then use our controls on the second fundamental form and the ambient curvature and integrate the Riccati equation satisfied by the Jacobi fields generated by the infinitesimal change of starting point on the hypersurface $\Sigma$. We therefore obtain like in Lemma \ref{seconde forme fondamentale equidistantes} the following control for $C= C(n,v_0)>0$, by noting $s$ the parameter of the geodesics vanishing on $\Sigma$ :
        \begin{equation}
            \|F_g^*g-(ds^2+s^2g^\Sigma)\|_{C^{1,\alpha}}\leqslant C\epsilon.\label{contrôle coordonnées Fg*g}
        \end{equation}
        By the assumptions of Proposition \ref{Existence pert CMC}, up to choosing $\epsilon$ arbitrarily small, the second fundamental form is arbitrarily close to being constant and the ambient curvature arbitrarily small. In particular, according to Gauss equation \eqref{gauss eq} together with Lemma \ref{sec pincée ch gro} and Proposition \ref{contrôles optimaux sphere}, this implies that there exists $\Gamma\subset SO(4)$ such that, up to reparametrizing $\Sigma$, we have for some $C=C(n,v_0)>0$, 
        $$\|g^\Sigma-g^{\mathbb{S}^{n-1}\slash\Gamma}\|_{C^{1,\alpha}}\leqslant C\epsilon.$$
        Combining this with the control \eqref{contrôle coordonnées Fg*g}, we get the stated result.
    \end{proof}
	
	\paragraph{Normal perturbations of the hypersurface.}
	
	Consider normal perturbations of a hypersurface $\Sigma$ of the form $\Sigma(w):= \{\gamma_x(w(x)), x\in \Sigma\}$. The computations of \cite{px} adapted to our situation yield the following development of the mean curvature.
	
	\begin{lem}\label{Courbure moyenne pert orth}
		For any $0<r_0<\frac{1}{6}$ and any smooth hypersurface $\Sigma\subset M$ whose injectivity radius and normal injectivity radius are larger than $3r_0$, there exists $\epsilon>0$ such that for all $w: \Sigma \to \mathbb{R}$ such that $\| w\|_{C^{2,\alpha}(g^\Sigma)}\leqslant \epsilon$, the mean curvature of $\Sigma(w)$ has the development:
		$$H(\Sigma(w)) = H(\Sigma) - J_{\Sigma,M} w + Q_{\Sigma,M}(w),$$
		where $$J_{\Sigma,M}:= \Delta_\Sigma + |A^\Sigma|^2 + \emph{Ric}^M(N,N)$$ is the Jacobi operator of the hypersurface, and where $Q_{\Sigma,M}$ is such that for a constant $C>0$ depending on the $C^2$-norm of the curvature and the $C^1$-norm of the second fundamental form and $r_0$, we have :
		$$ \|Q_{\Sigma,M}(w) - Q_{\Sigma,M}(w')\|_{C^\alpha(g^\Sigma)}\leqslant  C \| w-w' \|_{C^{2,\alpha}(g^\Sigma)} \big(\| w\|_{C^{2,\alpha}(g^\Sigma)}+\| w' \|_{C^{2,\alpha}(g^\Sigma)}\big).$$
	\end{lem}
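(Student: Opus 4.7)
The plan is to work in Fermi coordinates around $\Sigma$. The assumption $\textup{inj}^\perp_g(\Sigma) \geqslant 3r_0$ makes $F : (x,s) \in \Sigma \times (-3r_0,3r_0) \to M$, $F(x,s) = \gamma_x(s)$, a diffeomorphism onto a tubular neighborhood, and in these coordinates the metric takes the form $g = ds^2 + g_s$, where $s \mapsto g_s$ is a smooth one-parameter family of metrics on $\Sigma$ with $g_0 = g^\Sigma$ and $\partial_s g_s\big|_{s=0} = 2A^\Sigma$ (using the sign convention of the Note). For $\|w\|_{C^{2,\alpha}}$ small enough the perturbed hypersurface $\Sigma(w)$ is precisely the graph $\{(x,w(x)) : x \in \Sigma\}$ inside this neighborhood. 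Its tangent vectors read $\partial_i + w_i \partial_s$, so the induced metric is $(g_{w(x)})_{ij} + w_i w_j$ and the outward unit normal is proportional to $\partial_s - w^j \partial_j$ (indices raised with $g_{w(x)}$); a direct computation then expresses the mean curvature as a nonlinear second-order operator
$$H(\Sigma(w)) = \mathcal{H}\bigl(x,\, w,\, \nabla^\Sigma w,\, (\nabla^\Sigma)^2 w\bigr),$$
whose coefficients are smooth functions of $x,w,\nabla w$ and of $g_s,\partial_s g_s,\partial_s^2 g_s$ evaluated at $s = w(x)$. The hypotheses on $\|\Rm\|_{C^2}$ and $\|A^\Sigma\|_{C^1}$ bound these coefficients uniformly in $C^\alpha$ on the tubular neighborhood.

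The first step is then to identify the linearization, which is the classical first variation of mean curvature under normal deformation. Differentiating $t \mapsto H(\Sigma(tw))$ at $t=0$, i.e.\ computing the partial derivatives of $\mathcal{H}$ at the origin, and using $\partial_s g_s|_{s=0} = 2A^\Sigma$ together with the Gauss and Ricci identities to identify the contribution of the Riemann tensor along $N$, one recovers
$$\frac{d}{dt}\bigg|_{t=0} H(\Sigma(tw)) = -\bigl(\Delta_\Sigma w + |A^\Sigma|^2 w + \mathrm{Ric}^M(N,N)\, w\bigr) = -J_{\Sigma,M} w.$$
Setting $Q_{\Sigma,M}(w) := H(\Sigma(w)) - H(\Sigma) + J_{\Sigma,M} w$ then defines the remainder, which by construction is a smooth nonlinear second-order differential operator vanishing to order two in $\bigl(w,\nabla w,\nabla^2 w\bigr)$ at the origin.

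For the quadratic estimate, I would write $Q_{\Sigma,M}$ in Taylor--integral form. Denoting by $D\mathcal{H}$ the differential of $\mathcal{H}$ in its $(w,\nabla w,\nabla^2 w)$ arguments and using $Q_{\Sigma,M}(0) = 0$, $DQ_{\Sigma,M}(0) = 0$, one gets
$$Q_{\Sigma,M}(w) - Q_{\Sigma,M}(w') = \int_0^1 \bigl[D\mathcal{H}(\tau w + (1-\tau)w') - D\mathcal{H}(0)\bigr] \cdot \bigl(w-w',\nabla(w-w'),\nabla^2(w-w')\bigr)\, d\tau.$$
Since $D\mathcal{H}(\xi) - D\mathcal{H}(0) = O(|\xi|)$ with derivatives controlled in terms of $r_0$, $\|\Rm\|_{C^2}$, $\|A^\Sigma\|_{C^1}$, the Hölder product and composition rules yield the desired estimate $\|Q_{\Sigma,M}(w) - Q_{\Sigma,M}(w')\|_{C^\alpha} \leqslant C \|w-w'\|_{C^{2,\alpha}} (\|w\|_{C^{2,\alpha}} + \|w'\|_{C^{2,\alpha}})$.

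The main obstacle is not any single conceptual step but the bookkeeping of how the $s$-dependence of $g_s$, $\partial_s g_s$ and $\partial_s^2 g_s$ interacts with the composition $s = w(x)$ in Hölder norms: one must check that the smallness condition $\|w\|_{C^{2,\alpha}} \leqslant \epsilon < r_0/C$ keeps the graph inside the tubular neighborhood so that Taylor expansion in $s$ is valid, and that the $C^\alpha$-coefficients of $\mathcal{H}$ are preserved under evaluation at $s = w(x)$. The identification of the linearization, once the graph formula is written down, is classical and requires only the standard Jacobi computation.
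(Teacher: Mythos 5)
Your proof is correct and is essentially the computation the paper outsources: the paper simply invokes the constant-mean-curvature graph computation of Pacard--Xu, which is exactly the Fermi-coordinate derivation you carry out (metric $ds^2 + g_s$ with $\partial_s g_s|_{s=0} = 2A^\Sigma$, mean curvature of the graph as a second-order nonlinear operator, linearization giving $-J_{\Sigma,M}$, Taylor--integral form of the remainder yielding the quadratic $C^\alpha$ estimate with constant controlled by $r_0$, $\|\Rm\|_{C^2}$ and $\|A^\Sigma\|_{C^1}$). The only thing I would make explicit, which you flag but do not spell out, is that $C^{0,\alpha}$ control of the coefficients of $\mathcal{H}$ after evaluating $g_s,\partial_s g_s,\partial_s^2 g_s$ at $s=w(x)$ uses the Riccati equation for the shape operator of the level sets to convert the ambient $C^2$ curvature bound and the $C^1$ bound on $A^\Sigma$ into bounds on those coefficients throughout the tube.
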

	\begin{proof}
	    
	\end{proof}

Proposition \ref{Existence pert CMC} is then a consequence of the following Lemma applied to $\Sigma$.
	
	\begin{lem}\label{existence perturbation}
	For any $r_0>0$, there exists $\epsilon>0$ such that if a metric $g$ on the annulus $A_e(1-r_0,1+r_0)\subset \mathbb{R}^n\slash\Gamma$ for $\Gamma\subset SO(n)$ with $\Gamma\neq \{\textup{Id}\}$ satisfies
	\begin{itemize}
	    \item $\|g-g_e\|_{C^{1,\alpha}(g_e)}\leqslant \epsilon$,
	    \item on $S$ the unit sphere centered at zero in $S\subset\mathbb{R}^n\slash\Gamma$, we have $$\|A_g^{S}-g_{|S}\|_{C^\alpha(S)}\leqslant \epsilon ,$$
	    \item on $A_e(1-r_0,1+r_0)$, we have
	    $$\|\Rm_g\|_{C^4(g_e)}\leqslant \epsilon,$$
	\end{itemize}
	Then, there exists $\epsilon_1=\epsilon_1(r_0,\epsilon,n,\alpha)>0$ such that there exists a unique solution $w$ satisfiying $\|w\|_{C^{2,\alpha}(S)}\leqslant \epsilon_1$ to the equation
		$$H_g(S(w)) \equiv n-1,$$
		where $H_g$ is the mean curvature for $g$ and where $S(w)= \{\gamma_x(w(x)),x\in S\}$ for $\gamma_x$ the geodesic starting $x$ and outwards normal to $S$.
		
		We moreover have the control:
		$$ \|w\|_{C^{2,\alpha}(S)}\leqslant C \left\|H_g(S) - (n-1)\right\|_{C^\alpha(S)}.$$
	\end{lem}
	\begin{proof}
	    This is a consequence of the following quantitative version of the inverse function theorem proven by Banach fixed point theorem.
		\begin{lem}\label{fonctions inverses}
				Let $\Phi: E\to F$, be a smooth map between Banach spaces and let $Q:= \Phi - \Phi(0)- d_0\Phi$.
				
				Assume that there exist $q>0$, $r_0>0$ and $c>0$ such that
				\begin{enumerate}
				\item for all $x$ and $y$ in $B(0,r_0)$, we have the following control on the nonlinear terms
				$$\|Q(x)-Q(y)\|\leqslant q (\|x\|+\|y\|)\|x-y\|.$$
				\item the linearization $d_0\Phi$ is an isomorphism, and more precisely, we have
				$$\|(d_0\Phi)^{-1}\|\leq c.$$
				\end{enumerate}
				
				If $r\leqslant \min\Big(r_0,\frac{1}{2qc}\Big)$ and $\|\Phi(0)\| \leqslant \frac{r}{2c}$, then, the equation $\Phi(x) =0$ admits a unique solution in $B(0,r)$.
		\end{lem}
		We will equip $S$ with the Hölder norms given by the restriction of $g_e$ on $S$.	Let us consider the operator $\Phi : C^{2,\alpha}(S) \to C^{0,\alpha}(S)$ defined by
		$$\Phi :w\in C^{2,\alpha}(S) \mapsto H_g(S(w))-(n-1)\in C^{0,\alpha}(S),$$
		and show that it satisfies the assumptions of Lemma \ref{fonctions inverses}, with $\Phi(0) = H_g(S)-(n-1) $, $d_0\Phi = J_{S,g}$.
		\begin{rem}
		    In this proof, we will precise the metric instead of the manifold for the Jacobi operator and the nonlinear terms of the mean curvature variations.
		\end{rem}
		\begin{enumerate}
			\item The operator $J_{S,g}$ is invertible and has a bounded inverse for $\epsilon>0$ small enough. Indeed, $-(n-1)$ is not an eigenvalue of the Laplacian because $\Gamma\neq\{\textup{Id}\}$, therefore
			$J_{S,g_e}$ is invertible with a bounded inverse. Moreover, by proximity of the metrics and the control of the $C^\alpha(S)$-norm of the second fundamental form and the $C^4$-norm of the ambient curvature, there exists $C_1>0$ such that, 
			$$\|J_{S,g_e} - J_{S,g}\|\leqslant C_1\epsilon,$$ for the operator norm from $C^{2,\alpha}(S)$ to $C^{\alpha}(S)$. The operator $J_{S,g}$ is therefore invertible with a bounded inverse for $\epsilon$ small enough since this is an open condition.
			
			\item The nonlinear terms are controlled in a quadratic way. There exists a constant $C>0$ depending on $r_0, \alpha$ and $\epsilon$  :
			$$ \|Q_{S,g}(w) - Q_{S,g}(w')\|_{C^\alpha}\leqslant  C \| w-w' \|_{C^{2,\alpha}(S)}(\|w\|_{C^{2,\alpha}}+\|w'\|_{C^{2,\alpha}(S)}). $$
			
			\item Since the second fundamental form is almost constant, we have:
			$$\left\|H_g(S) - (n-1)\right\|_{C^\alpha(S)} \leqslant \epsilon.$$
		\end{enumerate}
		Therefore, there exists a unique solution $w$ to the equation $\Phi(w)=0$ such that $\|w\|_{C^{2,\alpha}(S)}\leqslant \epsilon_1 := \min\Big(\frac{1}{2CC_1}, 2C_1\epsilon\Big)$. This solution moreover satisfies for $C'>0$, $$\|w\|_{C^{2,\alpha}(S)}\leqslant C' \|H_g(S)-(n-1)\|_{C^{\alpha}(S)}.$$
\end{proof}		

\begin{proof}[Proof of Proposition \ref{Existence pert CMC}]
    According to Lemma \ref{coord geod}, under the assumptions of Proposition \ref{Existence pert CMC}, then, there exists a diffeomorphism $\Phi : A_e(1-2r_0,1+2r_0)\to \{x,d(x,\Sigma)<2r_0\}$ such that we have for $C>0$,
        \begin{itemize}
            \item the metrics are close, $$\|\Phi^*g-g_e\|_{C^{1,\alpha}(g_e)}\leqslant C\epsilon,$$
            \item for the unit-sphere centered at zero denoted $S\subset\mathbb{R}^n\slash\Gamma$, we have $$\|A_{\Phi^*g}^{S}-(\Phi^*g)_{|S}\|_{C^\alpha}\leqslant \epsilon ,$$
	    \item for the curvature on $A_e(1-r_0,1+r_0)$, we have
	    $$\|\Rm_{\Phi^*g}\|_{C^4}\leqslant \epsilon,$$
        \end{itemize}
        where $\epsilon>0$ is the constant in the assumptions of Proposition \ref{Existence pert CMC}, for which the metric $\Phi^*g$ satisfies the assumptions of Lemma \ref{existence perturbation} with $S = \Phi^*\Sigma$ the unit-sphere of $\mathbb{R}^n\slash\Gamma$. Applying Lemma \ref{existence perturbation}, we obtain the statement of Proposition \ref{Existence pert CMC}.
\end{proof}

	For $\Sigma$ satisfying the assumptions of Proposition \ref{Existence pert CMC}, let us define $\tilde{\Sigma} := \Sigma(w)$, where $w$
	is the unique solution given by the proof of Proposition \ref{Existence pert CMC}. 

\section{Construction of a foliation of the neck regions by constant mean curvature hypersurfaces}
Let us now foliate our neck regions by constant mean curvature hypersurfaces.
\subsection{Local foliation by constant mean curvature hypersurfaces}
We start by constructing our foliation locally.

\begin{prop}[Local foliation]\label{feuilletage local}
    For all $n\in\mathbb{N}$ and $0<r_0<\frac{1}{6}$, there exists $C>0$ and $0<\epsilon<\frac{r_0}{C}$ such that if a hypersurface $\tilde{\Sigma}_1$ diffeomorphic to $\mathbb{S}^{n-1}\slash\Gamma$, for $\Gamma \neq \{\mathrm{Id}\}$, of a manifold $(M,g)$ satisfies:
	\begin{itemize}
		\item the normal injectivity radius of $\tilde{\Sigma}_1$ and the injectivity radius of the points $\tilde{\Sigma}_1$ in $M$ are bounded below by $3r_0$,
		\item $H^{\tilde{\Sigma}_1} \equiv n-1$, and $\|A^{\tilde{\Sigma}_1}-g^{\tilde{\Sigma}_1}\|_{C^{\alpha}(g^{\tilde{\Sigma}_1})}<\epsilon$,
		\item on the annulus $\{x,d^M_g(x,\Sigma)<3r_0\}$, we have $\|\Rm_g\|_{C^2(g)}\leqslant \epsilon$.
	\end{itemize}
	
	Then, there exists a family of hypersurfaces $\tilde{\Sigma}_s$, $s\in \Big[\frac{1}{1+2r_0}, 1+2r_0\Big]$ with constant mean curvature $\frac{n-1}{s}$ foliating a region of the annulus $\tilde{A}\Big(\frac{1}{1+2r_0}, 1+2r_0\Big)$ satisfying:	
	$$ \left\{x,\;d^M_g(x,\Sigma)<r_0\right\}\subset \tilde{A}\Big(\frac{1}{1+2r_0}, 1+2r_0\Big)  \subset \left\{x,\;d^M_g(x,\Sigma)<3r_0\right\}. $$
\end{prop}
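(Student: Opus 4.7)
The plan is to obtain each leaf $\tilde{\Sigma}_s$ as a small normal perturbation, via Proposition~\ref{Existence pert CMC}, of the parallel hypersurface $\Sigma_s^0 := \{\gamma_x(s-1) : x \in \tilde{\Sigma}_1\}$, where $\gamma_x$ is the unit-speed geodesic normal to $\tilde{\Sigma}_1$ at $x$. The guiding model is the Euclidean one: pushing $\mathbb{S}^{n-1}/\Gamma \subset \mathbb{R}^n/\Gamma$ outward by signed distance $s-1$ yields the sphere of radius $s$, with $H \equiv (n-1)/s$ and $A \equiv \tfrac{1}{s} g$. Since $|s-1| \leq 2r_0$ sits well inside the normal injectivity radius $3r_0$, $\Sigma_s^0$ is embedded for every $s$ in the required range.

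The first step is to show that the $\Sigma_s^0$ are $O(\epsilon)$-approximately CMC. This is obtained by integrating the Riccati equation
\[
\nabla_{\partial_t} A(t) + A(t)^2 + R(\,\cdot\,,N,\,\cdot\,,N) = 0
\]
along each normal geodesic, starting from $A(0) = A^{\tilde{\Sigma}_1} = g^{\tilde{\Sigma}_1} + O(\epsilon)$ with a forcing term controlled by $\|\Rm\|_{C^0} \leq \epsilon$. Comparison with the Euclidean solution $t \mapsto \tfrac{1}{1+t}g$ together with a Gronwall argument yields, uniformly in $s$,
\[
\bigl\|A^{\Sigma_s^0} - \tfrac{1}{s} g^{\Sigma_s^0}\bigr\|_{C^\alpha} + \bigl\|H^{\Sigma_s^0} - \tfrac{n-1}{s}\bigr\|_{C^\alpha} \leq C\epsilon,
\]
and preserves the injectivity and normal injectivity bounds of $\Sigma_s^0$. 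I expect this to be the main technical obstacle: the evolution range $|t| \lesssim r_0$ is not infinitesimal, so the linearized estimate near $t=0$ does not suffice, and the $C^\alpha$ control on $A(t)$ requires one more derivative of the curvature — which is exactly why the hypothesis $\|\Rm\|_{C^2} \leq \epsilon$ is in place.

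The second step is a fibered application of Proposition~\ref{Existence pert CMC}. Rescaling the ambient metric by $g_s := g/s^2$ turns the target mean curvature $(n-1)/s$ into $n-1$; the rescaled second fundamental form $s^{-1} A^{\Sigma_s^0}$ is then $C\epsilon$-close to $g_s^{\Sigma_s^0}$, the rescaled curvature $s^2 \Rm$ remains $O(\epsilon)$, and the injectivity bounds are preserved up to the bounded factor $s \in [1/(1+2r_0), 1+2r_0]$. Proposition~\ref{Existence pert CMC} thus furnishes a unique $w_s$ with $\|w_s\|_{C^{2,\alpha}} \leq C\epsilon$ such that $\tilde{\Sigma}_s := \Sigma_s^0(w_s)$ has mean curvature exactly $(n-1)/s$ in $g$. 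Uniqueness forces $w_1 = 0$, so $\tilde{\Sigma}_1$ is recovered at $s=1$, and a parameter-dependent version of the Banach fixed-point argument in Lemma~\ref{existence perturbation} makes $s \mapsto w_s$ smooth. Consequently, the assignment $(x,s) \mapsto \gamma_x\!\bigl((s-1) + w_s\!\bigr)$ from $\tilde{\Sigma}_1 \times [1/(1+2r_0), 1+2r_0]$ into $M$ is a $C^1$-perturbation of size $O(\epsilon)$ of the normal exponential of $\tilde{\Sigma}_1$, which is itself a diffeomorphism onto the $2r_0$-tubular neighborhood. For $C\epsilon < r_0$ the perturbed map remains a diffeomorphism onto its image, the leaves $\tilde{\Sigma}_s$ are pairwise disjoint, and their union is squeezed between the $r_0$- and $3r_0$-tubular neighborhoods of $\tilde{\Sigma}_1$, which is the claimed foliation.
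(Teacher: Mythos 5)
Your overall strategy matches the paper's: define the equidistant hypersurfaces $\Sigma_s^0 := \{\gamma_x(s-1)\}$ from $\tilde{\Sigma}_1$, integrate the Riccati equation to get $C\epsilon$-smallness of $A^{\Sigma_s^0} - \tfrac{1}{s}g^{\Sigma_s^0}$ and of $H^{\Sigma_s^0} - \tfrac{n-1}{s}$, then apply Proposition~\ref{Existence pert CMC} after rescaling to produce a unique small CMC perturbation $\tilde{\Sigma}_s = \Sigma_s^0(w_s)$ for each $s$. Up to here you and the paper agree.

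The gap is in your final step, where you conclude the foliation property. You write that since the fixed-point argument ``makes $s\mapsto w_s$ smooth,'' the map $(x,s)\mapsto\gamma_x\bigl((s-1)+w_s(x)\bigr)$ is a $C^1$-perturbation of the normal exponential of size $O(\epsilon)$, hence a diffeomorphism. But smoothness of $s\mapsto w_s$ is qualitative; what you actually need is the quantitative bound $\|\partial_s w_s\|_{C^0}=O(\epsilon)$ (or at least $<1$ uniformly), because disjointness of the leaves is equivalent to $1+\partial_s w_s(x)>0$ for all $(x,s)$. A bound $\|w_s\|_{C^{2,\alpha}}\le C\epsilon$ for every $s$ does not yield any bound on the $s$-derivative: the function $w_s(x)$ could oscillate rapidly in $s$ while staying pointwise small. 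This is precisely the pitfall the paper flags in the remark following Proposition~\ref{feuilletage local}, where it is noted that even the stronger estimate $\|w_s\|_{C^{2,\alpha}}\le C\epsilon(|s-1|+|s-1|^2)$ ``is not enough to rule out $w_s(x)=w_{s'}(x')$ when $s-1$ is much larger than $s-s'$.'' The paper's fix is different from yours: for nearby $s<s'$ it re-expresses $\tilde{\Sigma}_{s'}$ as a normal perturbation $\tilde{\Sigma}_s(w_s^{s'})$ of the leaf $\tilde{\Sigma}_s$ itself (not of $\tilde{\Sigma}_1$), solves the linearized equation on $\tilde{\Sigma}_s$, and derives $w_s^{s'} = (s'-s) + \mathcal{O}\bigl((s'-s)^2 + \epsilon(s'-s)\bigr) > 0$, from which positivity and hence disjointness follow.

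Your route can be repaired, but you have to actually estimate $\partial_s w_s$: differentiate the defining identity $H\bigl(\Sigma_s^0(w_s)\bigr) = \tfrac{n-1}{s}$ in $s$, note that the $s$-derivative of $H(\Sigma_s^0(w))$ at fixed $w$ is $-J_{\Sigma_s^0(w),M}(1) = -\bigl(|A|^2+\Ric(N,N)\bigr) = -\tfrac{n-1}{s^2}+O(\epsilon)$, which cancels the $-\tfrac{n-1}{s^2}$ on the right up to $O(\epsilon)$, and then invert the Jacobi operator (which is uniformly invertible precisely because $\Gamma\neq\{e\}$) to get $\|\partial_s w_s\|_{C^{2,\alpha}}\le C\epsilon$. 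Without this computation the concluding assertion of your proof is unjustified, so as written there is a genuine gap.
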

    The construction we propose consists in perturbing normally the level sets of the distance function to the hypersurface $\tilde{\Sigma}_1$ in constant mean curvature hypersurfaces by Proposition \ref{Existence pert CMC}.
    \\
    
	Let us define the equidistant hypersurfaces $\Sigma_s:= \{\gamma_x(s-1), x\in \tilde{\Sigma}_1 \} $ for $s>0$ small enough where $\gamma_x$ is the outwards normal geodesic to the hypersurface $\tilde{\Sigma}$ starting at $x$. These hypersurfaces foliate the annulus $\{x,d^M_g(x,\Sigma)<3r_0\}$ by definition of the normal injectivity and their geometry is well controlled around $\tilde{\Sigma}_1$.	
	
	\begin{proof}[Proof of Proposition \ref{feuilletage local}]	
	Thanks to the controls \eqref{contrôle H equidist}, and \eqref{contrôle A equidist}, we can use Lemma \ref{existence perturbation} to perturb normally each hypersurface $\Sigma_s$ to a constant mean curvature hypersurface $\tilde{\Sigma}_s$. 
	Hence, there exists a family of hypersurfaces $(\tilde{\Sigma}_s)_{s\in \left[\frac{1}{1+2r_0}, 1+2r_0\right]}$ with mean curvature constant equal to $\frac{n-1}{s}$ for each $s$. These hypersurfaces are defined from a \emph{unique} function $w_s$ from $\tilde{\Sigma}_1$ to $\mathbb{R}$ by Proposition \ref{Existence pert CMC},
	$$\tilde{\Sigma}_s = \Sigma(s(1+w_s)):= \{\gamma_x(s(1+w_s(x))), x\in \tilde{\Sigma}_1\},$$
	we moreover have the control
	$$\|w_s\|_{C^{2,\alpha}(g^{\tilde{\Sigma}_1})}\leqslant C\epsilon(|s-1| + |s-1|^2).$$

    Let us now show that this family actually foliates the annulus. More precisely, we will show that given $s$ and $s'\in [\frac{1}{1+2r_0}, 1+2r_0]$, close enough and such that $s<s'$, $\tilde{\Sigma}_{s'}$ is a normal perturbation of $\tilde{\Sigma}_{s}$ by a \emph{strictly positive} function $w_s^{s'}: \tilde{\Sigma}_s\to\mathbb{R}$. They are indeed normal perturbation of each other because they are small (in $C^{2,\alpha}(g^{\tilde{\Sigma}_1})$-norm) normal perturbations of the equidistant hypersurfaces from $\tilde{\Sigma}_1$.
	
	By Lemma \ref{existence perturbation}, for $s$ and $s'$ close enough, the function $w_s^{s'}$ satisfies $\|w_s^{s'}\|_{C^{2,\alpha}((g^{\tilde{\Sigma}_s}))}\leqslant 2 |s'-s|$ and is a solution to $$-J_{\tilde{\Sigma}_{s},M}(w_s^{s'}) =  \frac{n-1}{s'}-\frac{n-1}{s} + Q_{\tilde{\Sigma}_{s},M}(w_s^{s'}),$$
 where there exists $C=C(\epsilon,n,\alpha,r_0)>0$ independent of $s$ such that for any functions $w$, and $w'$, we have $\|Q_{\tilde{\Sigma}_{s},M}(w) - Q_{\tilde{\Sigma}_{s},M}(w')\|_{C^\alpha(g^{\tilde{\Sigma}_s})}\leqslant C \| w-w' \|_{C^{2,\alpha}}(\|w\|_{C^{2,\alpha}(g^{\tilde{\Sigma}_s})}+\|w'\|_{C^{2,\alpha}(g^{\tilde{\Sigma}_s})})$.
	
	We therefore get $\|Q_{\tilde{\Sigma}_{s},M}(w_s^{s'})\|_{C^\alpha(g^{\tilde{\Sigma}_s})}\leqslant 4C |s'-s|^2 $, and since $s\tilde{\Sigma}_{s}$ is close to $\mathbb{S}^{n-1}\slash\Gamma$, for $|s'-s|$ small enough, $J_{\tilde{\Sigma}_{s},M}$ is close to $\Delta^{\mathbb{S}^{n-1}\slash\Gamma}  + n-1$, and we have:
	$$\Big\|-\big(\Delta^{\mathbb{S}^{n-1}\slash\Gamma}  + n-1\big) (w_s^{s'}-(s'-s)) \Big\|_{C^{\alpha}(g^{\tilde{\Sigma}_s})} \leqslant C \big((s'-s)^2 + \epsilon (s'-s)\big) .$$
	Since $-\Delta^{\mathbb{S}^{n-1}\slash\Gamma}+n-1$ is invertible, there exists $C=C(\alpha, n)>0$ such that $$\|w_s^{s'}-(s'-s)\|_{C^{2,\alpha}(g^{\tilde{\Sigma}_s})}\leqslant C \big((s'-s)^2 + \epsilon (s'-s)\big).$$ Finally, we have $w_s^{s'} = (s'-s) + \mathcal{O}\big((s'-s)^2 + \epsilon (s'-s)\big)$, and in particular, for $s'-s>0$ and $\epsilon$ small enough (depending on $n$ and $\alpha$), we have $w_s^{s'} >0$.
	
	This implies that for all $s,s'\in [\frac{1}{1+2r_0}, 1+2r_0]$, the hypersurfaces $\tilde{\Sigma}_s$ and $\tilde{\Sigma}_{s'}$ are all disjoint and foliate the annulus. Indeed, we have just seen that for $s<s'$ and $|s-s'|$ small enough, the hypersurface $\tilde{\Sigma}_{s'}$ is strictly included in one side of $\tilde{\Sigma}_{s}$. Recalling that for all $s$, $\tilde{\Sigma}_s = \tilde{\Sigma}_1(s(1+w_s))$, this means that for all $x$, the function $s\mapsto (s(1+w_s(x)))$ is strictly increasing, and therefore that we indeed have a foliation of the annulus.
\end{proof}
	
	\begin{rem}
	    It was not directly possible to ensure that two hypersurfaces $\tilde{\Sigma}_s$ and $\tilde{\Sigma}_{s'}$ do not intersect for $s,s'\in [\frac{1}{1+2r_0}, 1+2r_0]$ from our previous controls. Indeed, the control $\|w_s\|_{C^{2,\alpha}(g^{\tilde{\Sigma}_1})}\leqslant C(\epsilon|s-1| +\epsilon |s-1|^2)$ is not enough to rule out $w_s(x) = w_{s'}(x')$ when $s-1$ is much larger than $s-s'$.
	\end{rem}

\subsection{Global foliation of the neck regions.}

Let us come back to our applications and consider an Einstein manifold $(M,g)$ close to an Einstein orbifold $(M_o,g_o)$ in the Gromov-Hausdorff sense. According to Proposition \ref{description dégénérescence}, this manifold is $\epsilon$-approximated by a tree of Ricci-flat ALE orbifolds desingularizing the orbifold $(M_o,g_o)$. We have satisfactory coordinates on the regions $\mathcal{M}_o(\epsilon)$ and $\mathcal{N}_{k}(\epsilon)$ and want to find good coordinates in the annuli $\mathcal{A}_{k}(t,\epsilon)$. They are included in metric annuli $A(\rho_1,\rho_2)$ for $\rho_1:= \frac{1}{8}\epsilon^{-1} T_{k}^{\frac{1}{2}}$ and $\rho_2:= 8 \epsilon T_{j}^{\frac{1}{2}}$ on which we have the following properties:
\begin{enumerate}
	\item the curvature is controlled by \cite[Proposition 3]{ban}: for all $l\in\mathbb{N}$, there exists $C>0$ and $\beta_1,\beta_2>0$ depending on $g_o$ and the $g_{b_j}$ such that at distance $\rho\in \big[\frac{\rho_1}{2},2\rho_2]$ from $p$, we have
	\begin{equation}
	    \rho^{2+l} \big|\nabla^l \Rm\big|\leqslant \eta(\rho):= C \epsilon\left[\left(\frac{\rho_1}{\rho}\right)^{\beta_1}+\left(\frac{\rho}{\rho_2}\right)^{\beta_2}\right],\label{contrRm}
	\end{equation}
	\begin{rem}
	    In \cite{ban}, the result is only stated for $l=0$, but the regularity of Einstein metrics, and in particular the fact that the curvature is harmonic (for the Hodge Laplacian) yields the estimate for higher derivatives by local elliptic estimates.
	\end{rem}
	\item by Proposition \ref{Premières coordonnées}, there exists $\delta(\epsilon)$ such that $\delta(\epsilon)\to 0$ as $\epsilon\to 0$ and for any $2\rho_1<\rho<\frac{1}{4}\rho_2$, there exists a region $\hat{A}\big(\rho,2\rho\big)$ satisfiying
	$$A\big((1+\delta)\rho,(2-\delta)\rho\big) \subset \hat{A}\big(\rho,2\rho\big)\subset A\big((1-\delta)\rho,(2+\delta)\rho\big),$$
	$\Gamma$ a finite subgroup of $SO(4)$ and a diffeomorphism $$\Phi_\rho : A_{e}(1,2)\subset\mathbb{R}^4\slash\Gamma\to \hat{A}\big(\rho,2\rho\big)\subset M,$$
	for which we have,
	$$\Big\|\frac{\Phi_\rho^*g}{\rho^2}-g_e\Big\|_{C^{l+2}(A_{e}(1,2))}\leqslant \delta.$$
	
	\item by Lemma \ref{hypersurfaces presque sphériques}, there exists a hypersurface $\Sigma_{\rho_1}$ whose second fundamental form satisfies $\big\|\frac{A_g^{\Sigma_{\rho_1}}}{\rho_1}-\frac{g}{\rho_1^2}\big\|_{C^{l+1}(\frac{g}{\rho_1^2})}\leqslant \delta$,

	\item by Lemma \ref{contrôle du rayon d'injectivité} and Lemma \ref{injectivité normale} there exists $r_0>0$ such that the injectivity radius at each point $x$ is bounded below by $r_0 d(x,p)$, and the normal injectivity radius to $\Sigma_{\rho_1}$ by Lemma \ref{hypersurfaces presque sphériques} is bounded below by $r_0\rho_1$.
	\end{enumerate}
	
The control \eqref{contrRm} means that in the middle of the annulus, we have a much better control than initially expected since the classical $\epsilon$-regularity theorems \cite{gao,bkn,and} would only give $\rho^2|\Rm|\leqslant C \|\Rm\|_{L^2(A(\rho_1,\rho_2))}$. From the point of view of analysis, the difference is substantial as it allows one to do analysis in weighted Hölder spaces, and this will be heavily used in \cite{ozu2}.

\begin{rem}
    Analogous estimates hold in any dimension $n$ by assuming some much less natural $L^\frac{n}{2}$-controls on the Riemann curvature.
\end{rem}
	
	The main difficulty in constructing good coordinates here is that the interior radius, $\rho_1$, is neglectible when compared to the radius at which our controls are optimal, $\bar{\rho}:= (\rho_1^{\beta_1} \rho_2^{\beta_2})^{\frac{1}{\beta_1+\beta_2}}$ which minimizes $\eta$. Constant mean curvature hypersurfaces are convenient in this context as they can be constructed in the entire zone, and can be controlled by Proposition \ref{contrôle CMC}.

\begin{prop}\label{feuilletage}
For all $n\in\mathbb{N}$, there exists $\epsilon_0>0$, and $C>0$ such that for any $0<\epsilon<\epsilon_0$, we have the following property. Let $\mathcal{A}_{k}(t,\epsilon)$ be an intermediate annulus of an Einstein metric which is $\epsilon$-approximated by a naïve desingularization, and assume that it is included in a metric annulus $A(4\rho_1,\frac{1}{4}\rho_2)$ with $\int_{A(\rho_1,\rho_2)}|\Rm|^2dv< \epsilon^2$. Then, there exists a foliation of a region $\tilde{A}\big(2\rho_1,\frac{1}{2}\rho_2\big)$ bounded by two hypersurfaces of mean curvature respectively constant equal to  $\frac{n-1}{2\rho_1}$ and $\frac{n-1}{\frac{1}{2}\rho_2}$, with 	
	$$ \mathcal{A}_{k}(t,\epsilon)\subset A\Big((2+C\epsilon)\rho_1,\Big(\frac{1}{2}-C\epsilon\Big)\rho_2\Big)\subset \tilde{A}\big(2\rho_1,\frac{1}{2}\rho_2\big) \subset A\Big((2-C\epsilon)\rho_1,\Big(\frac{1}{2}+C\epsilon\Big)\rho_2\Big),$$
	by hypersurfaces denoted $\tilde{\Sigma}_s$ whose mean curvature equals $\frac{n-1}{s}$.
\end{prop}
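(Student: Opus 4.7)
\textbf{Proof plan for Proposition \ref{feuilletage}.} The strategy is to build, at every scale $\rho\in[2\rho_1,\rho_2/2]$, a single constant mean curvature hypersurface $\tilde{\Sigma}_\rho$ of mean curvature $(n-1)/\rho$ by normal perturbation of the Euclidean sphere $\mathcal{S}_\rho$, then to use Proposition \ref{feuilletage local} to spread each one into a local CMC foliation, and finally to concatenate these local pieces using the uniqueness part of Proposition \ref{Existence pert CMC}. The curvature decay estimate \eqref{contrRm} from \cite{ban} and the first coordinates of Proposition \ref{Premières coordonnées} provide, after rescaling any dyadic sub-annulus centered at scale $\rho$ by $1/\rho$, a metric $C^l$-close to $g_e$ on $A_e(1/2,2)$ with ambient curvature bounded by $\eta(\rho)\ll 1$, and injectivity/harmonic radii bounded below by Lemma \ref{contrôle du rayon d'injectivité}.

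\emph{Step 1 — Pointwise existence.} Fix $\rho\in[2\rho_1,\rho_2/2]$ and work with the rescaled metric $g/\rho^2$. By Lemma \ref{hypersurfaces presque sphériques}, the sphere $\mathcal{S}_\rho$ has second fundamental form $A^{\mathcal{S}_\rho}$ which is $C^{l,\alpha}$-close to $g^{\mathcal{S}_\rho}$, and the curvature and normal injectivity radius hypotheses of Proposition \ref{Existence pert CMC} are met. Applying that proposition yields a unique function $w_\rho$ on $\mathcal{S}_\rho$ of small $C^{2,\alpha}$-norm (bounded by a constant times $\eta(\rho)$) whose normal graph $\tilde{\Sigma}_\rho:=\mathcal{S}_\rho(w_\rho)$ has constant mean curvature $n-1$ for $g/\rho^2$, i.e.\ $(n-1)/\rho$ for $g$. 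Via Corollary \ref{pert normale de chaque côté}, this smallness control on $w_\rho$ forces $\tilde{\Sigma}_\rho$ into the annular layer $A\big((1-C\epsilon)\rho,(1+C\epsilon)\rho\big)$, giving the inclusion statements at the endpoints $\rho=2\rho_1$ and $\rho=\rho_2/2$.

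\emph{Step 2 — Local foliations.} By Proposition \ref{contrôle CMC}, each $\tilde{\Sigma}_\rho$ is intrinsically close to $\mathbb{S}^{n-1}/\Gamma$ and has second fundamental form close to $g^{\tilde{\Sigma}_\rho}$ in $C^{k,\alpha}$-norm. Hence Proposition \ref{feuilletage local} (applied again in the rescaled metric $g/\rho^2$) furnishes a CMC foliation $(\tilde{\Sigma}_{\rho,s})_{s\in[1/(1+2r_0),\,1+2r_0]}$ of a neighborhood of $\tilde{\Sigma}_\rho$, the leaf at parameter $s$ having mean curvature $(n-1)/(\rho s)$ for $g$.

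\emph{Step 3 — Gluing by uniqueness (main obstacle).} Choose a geometric sequence of scales $\rho_i=(1+r_0)^i\cdot 2\rho_1$ covering $[2\rho_1,\rho_2/2]$. The local foliations around consecutive centers $\tilde{\Sigma}_{\rho_i}$ and $\tilde{\Sigma}_{\rho_{i+1}}$ overlap on a macroscopic range of scales. At any scale $\sigma$ lying in both overlap zones, the two candidate CMC hypersurfaces produced by the two local foliations are both small normal perturbations of the Euclidean sphere $\mathcal{S}_\sigma$ — indeed, Proposition \ref{feuilletage local} constructs them as perturbations of the equidistant hypersurfaces from $\tilde{\Sigma}_{\rho_i}$ and $\tilde{\Sigma}_{\rho_{i+1}}$ respectively, both of which are themselves perturbations of $\mathcal{S}_\sigma$ by functions of small $C^{2,\alpha}$-norm. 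By the uniqueness clause of Proposition \ref{Existence pert CMC}, both must coincide with the single perturbation of $\mathcal{S}_\sigma$ having constant mean curvature $(n-1)/\sigma$, and therefore with each other. Concatenating the local foliations thus yields a well-defined global foliation $(\tilde{\Sigma}_s)_{s\in[2\rho_1,\rho_2/2]}$.

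\emph{Main difficulty.} The range $[2\rho_1,\rho_2/2]$ can be arbitrarily long because $\rho_1$ may be much smaller than the optimal-control scale $\bar\rho=(\rho_1^{\beta_1}\rho_2^{\beta_2})^{1/(\beta_1+\beta_2)}$, so the neck is genuinely non-compact in logarithmic scale and a single invocation of local existence is insufficient. The delicate point is that the gluing must not accumulate error: this is why we insist on constructing each $\tilde\Sigma_\rho$ directly as a normal perturbation of the Euclidean sphere $\mathcal{S}_\rho$ (not of the preceding CMC leaf), so that the uniqueness statement applies at each scale with a fixed reference hypersurface, independent of the iteration step.
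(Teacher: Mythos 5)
Your proposal follows essentially the same route as the paper: construct each $\tilde{\Sigma}_s$ directly as a normal CMC perturbation of the Euclidean sphere $\mathcal{S}_s$ via Proposition \ref{Existence pert CMC}, then use Proposition \ref{feuilletage local} together with the uniqueness clause of Proposition \ref{Existence pert CMC} (mediated by Corollary \ref{pert normale de chaque côté} to pass between normal perturbations for $g$ and for $g_e$) to show the globally defined family $(\tilde{\Sigma}_s)$ locally coincides with a genuine foliation, hence is one. The cosmetic difference is that you phrase the patching over a geometric sequence of reference scales $\rho_i=(1+r_0)^i\cdot 2\rho_1$, whereas the paper simply verifies, for each $s_0$, that $\tilde{\Sigma}_s$ agrees with the leaf $\hat{\Sigma}^{s_0}_s$ of the local foliation around $\tilde{\Sigma}_{s_0}$; the argument and the ingredients are the same.
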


\begin{proof}
Considering $\Phi_{2\rho_1}$, the diffeomorphism of Proposition \ref{Premières coordonnées} at scale $2\rho_1$, we define the hypersurface $\Sigma_{2\rho_1}:=\Phi_{2\rho_1,*}S\subset M$, where $S$ is the unit-sphere centered at zero of $\mathbb{R}^4\slash\Gamma$. According to Lemmata \ref{hypersurfaces presque sphériques} and \ref{injectivité normale}, and Corollary \ref{pert normale de chaque côté}, we satisfy the assumptions of Lemma \ref{existence perturbation}. We can therefore perturb $ \Sigma_{2\rho_1}$ to a hypersurface $\tilde{\Sigma}_{2\rho_1}$ whose mean curvature is constant equal to $ \frac{n-1}{2\rho_1} $. 

According to Proposition \ref{feuilletage local}, there exists a foliation by hypersurfaces $\tilde{\Sigma}_s$ whose mean curvature is constant equal to $\frac{n-1}{s}$ for all $2\rho_1<s<(1+2r_0)2\rho_1$. Moreover, for any $s$, there exists a constant $C=C(n,\alpha)>0$ such that if we recall the notation
$$\eta(s) := C \epsilon\left[\left(\frac{\rho_1}{s}\right)^{\beta_1}+\left(\frac{s}{\rho_2}\right)^{\beta_2}\right],$$ 
we have the following \emph{a priori} controls on the $\tilde{\Sigma}_s$ thanks to Proposition \ref{contrôle CMC}:
	\begin{itemize}
		\item the normal injectivity radius of $\tilde{\Sigma}_s$ and the injectivity radius of the points of $\tilde{\Sigma}_s$ in $M$ are bounded below by $3sr_0$ according to Lemmata \ref{contrôle du rayon d'injectivité} and \ref{injectivité normale},
		\item $H^{\tilde{\Sigma}_1} \equiv \frac{n-1}{s}$, and $\|\frac{A^{\tilde{\Sigma}_s}}{s}-\frac{g^{\tilde{\Sigma}_s}}{s^2}\|_{C^{\alpha}}<C\eta(s)$,
		\item on the annulus $\{x,d^M_g(x,\tilde{\Sigma}_1)<3sr_0\}$, we have $\|\Rm_g\|_{C^4(g)}\leqslant \frac{C\eta(s)}{s^2}$.
	\end{itemize}
	We can therefore apply Proposition \ref{feuilletage local} from the hypersurface $\tilde{\Sigma}_{2(1+2r_0)\rho_1}$ in order to obtain a foliation by hypersurfaces with constant mean curvature equal to $\frac{n-1}{s}$ up to $s = 2(1+2r_0)^2\rho_1$. Once again, the \emph{a priori} controls of Proposition \ref{contrôle CMC} of order $C\eta(s)$ and the injectivity and normal injectivity controls of Lemmata \ref{contrôle du rayon d'injectivité} and \ref{injectivité normale} let us iterate this process in order to construct hypersurfaces $\tilde{\Sigma}_s$ with mean curvature constant equal to $\frac{n-1}{s}$ up to $s= 2(1+2r_0)^k\rho_1$ for any $k\in \mathbb{N}$ with $2(1+2r_0)^k\rho_1<\frac{1}{2}\rho_2$ which is the stated result.
\end{proof}

\section{Construction of coordinates and control on the metric}\label{Construction coordonnées et controle}

To construct a diffeomorphism, we will use the constant mean curvature hypersurface where our estimates are optimal and follow the gradient lines of the function $s:= \frac{n-1}{H(\tilde{\Sigma}_s)}$ whose level sets are the hypersurfaces $\tilde{\Sigma}_s$. The main result is the following Proposition:

\begin{prop}\label{bonnes coord}
	Under the same assumptions as Proposition \ref{feuilletage}, there exists $\beta_1>0$, $\beta_2>0$, $\Gamma\neq\{\mathrm{Id}\}$ a finite subgroup of $SO(4)$ such that for all $l\in \mathbb{N}$, there exists $C>0$, and a diffeomorphism: 	
	$$\Phi: A_{e}\Big(2\rho_1,\frac{1}{2}\rho_2\Big) \to \tilde{A}\Big(2\rho_1,\frac{1}{2}\rho_2\Big) ,$$
	such that for all $\rho\in [2\rho_1,\frac{1}{4}\rho_2]$, denoting $\Phi_\rho:= \Phi\circ \phi_\rho: A_{e}(1,2)\to \tilde{A}\Big(2\rho_1,\frac{1}{2}\rho_2\Big) ,$ where $\phi_\rho$ is the homothetic transformation of ratio $\rho$ on $\mathbb{R}^4\slash\Gamma$, we have:	
	$$\Big\|\frac{\Phi^*_\rho g}{\rho^2}-g_{e}\Big\|_{C^{l}(A_{e}(1,2))}\leqslant C \epsilon \left(\Big(\frac{\rho_1}{\rho}\Big)^{\beta_1}+\Big(\frac{\rho}{\rho_2}\Big)^{\beta_2}\right).$$ 
\end{prop}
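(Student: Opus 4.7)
The plan is to use the CMC foliation $(\tilde\Sigma_s)$ of Proposition~\ref{feuilletage} as a substitute for concentric spheres and, from it, build coordinates in which $\Phi^*g$ is naturally close to the flat cone metric $g_e = ds^2 + s^2 g_{\mathbb{S}^{n-1}/\Gamma}$. First, I would apply Proposition~\ref{contrôle CMC} leaf-by-leaf: at each scale $s\in[2\rho_1,\tfrac12\rho_2]$, rescale the metric by $s^{-2}$ so that $\tilde\Sigma_s$ has mean curvature $n-1$; the Bando estimate \eqref{contrRm} then bounds the rescaled ambient curvature at all derivative orders by $\eta(s):=C\epsilon[(\rho_1/s)^{\beta_1}+(s/\rho_2)^{\beta_2}]$, while the first coordinates of Proposition~\ref{Premières coordonnées} combined with Lemma~\ref{hypersurfaces presque sphériques} provide the $C^{1,\alpha}$-closeness to $\mathbb S^{n-1}/\Gamma$ required as input. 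We obtain diffeomorphisms $\Psi_s:\mathbb S^{n-1}/\Gamma\to\tilde\Sigma_s$ with
$$\Big\|\Psi_s^*\tfrac{g^{\tilde\Sigma_s}}{s^2} - g_{\mathbb S^{n-1}/\Gamma}\Big\|_{C^{l+1,\alpha}}\leqslant C\eta(s), \qquad \Big\|\tfrac{A^{\tilde\Sigma_s}}{s} - \tfrac{g^{\tilde\Sigma_s}}{s^2}\Big\|_{C^{l,\alpha}}\leqslant C\eta(s),$$
and with a single $\Gamma$ for all $s$ since the foliation is connected.

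Next, I would glue these leafwise identifications into a single $\Phi$ via normal flow. Pick a base scale $s_0$ near the geometric point $\bar\rho:=(\rho_1^{\beta_1}\rho_2^{\beta_2})^{1/(\beta_1+\beta_2)}$ where $\eta$ is minimal, and set $\Phi(s,\theta):=F_s\big(\Psi_{s_0}(\theta)\big)$, where $F_s$ is the unit-speed normal geodesic flow from $\tilde\Sigma_{s_0}$ (outward when $s>s_0$, inward when $s<s_0$) until meeting $\tilde\Sigma_s$. This is a global diffeomorphism from $A_e(2\rho_1,\tfrac12\rho_2)\subset\mathbb{R}^n/\Gamma$ onto $\tilde A(2\rho_1,\tfrac12\rho_2)$, thanks to the uniform normal injectivity of every leaf (Lemma~\ref{injectivité normale}) and the foliation structure of Proposition~\ref{feuilletage local}. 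By construction $\Phi^*g$ has no shift and takes the lapse form $\Phi^*g = N(s,\theta)^2\,ds^2 + g_s(\theta)$, where $g_s$ is the pull-back of the induced metric on $\tilde\Sigma_s$ and $N=|\nabla s|^{-1}$.

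The final step is to compare this with $g_e$ by propagating the leafwise estimates along the normal flow. The induced metric evolves by $\partial_t g_s = 2A^{\tilde\Sigma_s}$, the shape operator $S$ satisfies the Riccati equation $\nabla_N S = -(S^2 + \Rm(\cdot,N)N)$ from \eqref{riccati seconde forme}, and the CMC relation $H=(n-1)/s$ yields $ds/dt = s^2(|A|^2+\Ric(N,N))/(n-1)$. Setting $\tilde S:=sS-\mathrm{Id}$ and $\tilde g:=g_s/s^2 - g_{\mathbb S^{n-1}/\Gamma}$, these errors satisfy linear ODEs in $s$ whose homogeneous part is annihilated by the exact Euclidean solution $S=\mathrm{Id}/s$, $g_s=s^2 g_{\mathbb S^{n-1}/\Gamma}$ and whose forcing is of order $\eta(s)/s^2$ from the curvature terms. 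A Grönwall integration starting from the leafwise initial bound $\|\tilde S(s_0)\|_{C^l}+\|\tilde g(s_0)\|_{C^l}\leqslant C\eta(s_0)$ then transports the estimate to any scale $s$: $\|\tilde S(s)\|_{C^l}+\|\tilde g(s)\|_{C^l}+\|N-1\|_{C^l}\leqslant C\eta(s)$. Applying $\phi_\rho$ to convert to $A_e(1,2)$ produces the announced bound.

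The main obstacle is that $\eta$ is not monotone on $[2\rho_1,\tfrac12\rho_2]$, so care is needed in the propagation step: choosing $s_0$ at the minimum of $\eta$ is essential to keep the Grönwall accumulation bounded when flowing simultaneously outward and inward. Higher $C^l$-regularity is then obtained by differentiating the Riccati system tangentially and bootstrapping with the $C^{l+1,\alpha}$-leafwise estimate of Proposition~\ref{contrôle CMC}; this reduces to an ODE-with-parameters calculation carried out in the rescaled setting, which is possible precisely because each quantity is controlled at its natural scale and the Euclidean dilation is the exact kernel of the linearized flow.
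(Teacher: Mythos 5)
Your overall plan — use the CMC foliation, apply Proposition~\ref{contrôle CMC} leaf-by-leaf at the correct scale, anchor the construction at the optimal radius $\bar\rho$ where $\eta$ is minimal, and integrate in the radial direction — is the right skeleton and matches the paper. However, two of the central technical steps as you describe them have gaps, and the paper handles both of them differently.

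First, the construction of $\Phi$ by unit-speed normal geodesic flow from $\tilde\Sigma_{s_0}$ "until meeting $\tilde\Sigma_s$" does not give a shift-free (block-diagonal) form for $\Phi^*g$. The geodesics issued orthogonally from $\tilde\Sigma_{s_0}$ are \emph{not} orthogonal to the intermediate leaves $\tilde\Sigma_s$: equidistant hypersurfaces from $\tilde\Sigma_{s_0}$ are not the CMC leaves, and the geodesic tangent vector at the hitting time is generically not parallel to $N^{\tilde\Sigma_s}$. Consequently the claim that ``by construction $\Phi^*g$ has no shift'' fails, and so does the derived relation $ds/dt=s^2(|A|^2+\Ric(N,N))/(n-1)$, which is the mean-curvature evolution law for \emph{equidistant} surfaces, not for the CMC family. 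The paper resolves this by flowing not geodesically but \emph{normally to each leaf}: it sets $\partial_s\Phi(s,x)=u_s(x)\,N^{\tilde\Sigma_s}(\Phi(s,x))$ with $u_s$ chosen so that $\Phi(s,\cdot)$ parametrizes $\tilde\Sigma_s$; then the pulled-back metric is automatically of the form $u_s^2\,ds^2+s^2h_s$, with no cross terms.

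Second, the lapse estimate $\|N-1\|_{C^l}\leqslant C\eta(s)$ does not come from a Grönwall propagation. The lapse $u_s$ is determined on each leaf by an \emph{elliptic} constraint: since $H(s)\equiv(n-1)/s$ along the flow, the Huisken--Polden mean-curvature variation formula \eqref{est 4 hp} gives
\begin{equation*}
\Big(\Delta^{s}+|A(s)|^{2}+\Ric\big(N(s),N(s)\big)\Big)\,u_s=\frac{n-1}{s^{2}},
\end{equation*}
which is equation \eqref{equation u} of the paper. The leafwise bound $\|u_s-1\|\leqslant C\eta(s)$ then comes from inverting the operator $\Delta^{\mathbb{S}^{n-1}/\Gamma}+(n-1)$, and it is precisely here that the hypothesis $\Gamma\neq\{e\}$ enters: on the round sphere itself, $-(n-1)$ is an eigenvalue (restrictions of linear functions), but these eigenfunctions are killed by a nontrivial $\Gamma$. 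Your proposal never isolates this invertibility, so the lapse is left uncontrolled; a pure ODE/Grönwall argument in $s$ cannot replace it, because the lapse is not propagated — it is fixed instantaneously by the leaf geometry. Once the lapse and the shape operator are bounded \emph{leafwise} (the latter directly from Proposition~\ref{contrôle CMC}, with no need to propagate $\tilde S$ by Riccati), the only quantity that needs to be integrated in $s$ is the induced metric $h_s$, via $\partial_s(s^2 h_s)=2u_s A(s)$, and the integral $\int_{\bar\rho}^{s_0}(2/s)\,\eta(s)\,ds\lesssim\eta(s_0)$ closes because $\eta$ is a sum of power laws. That is the paper's route, and it sidesteps the nonmonotonicity issue you worried about.
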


\begin{rem}
    Our proofs again work in any dimension $n\in \mathbb{N}$ for metrics with a region close to a flat cone $\mathbb{R}^n\slash\Gamma$ (for $\Gamma\neq \{\mathrm{Id}\}$) where the curvature is small in $L^\frac{n}{2}$.
\end{rem}

\subsection{Coordinates based on our constant mean curvature foliation}
In the foliation constructed in Proposition \ref{feuilletage}, one particular hypersurface is better controlled than the others at its scale. It is $\tilde{\Sigma}_{\bar{\rho}}$ where $\eta(\bar{\rho})= \min_{\rho_1<\rho<\rho_2}\eta(\rho)$, which satisfies the following properties by \eqref{contrRm} and Proposition \ref{contrôle CMC}:
\begin{enumerate}
    \item there exists a diffeomorphism $\phi: \mathbb{S}^{n-1}\slash\Gamma \to \tilde{\Sigma}_{\bar{\rho}}$ such that: for all $k\in\mathbb{N}$ and $0<\alpha<1$, there exists $C=C(k,\alpha,n,r_0)>0$ for which,
    $$ \Big\|\frac{\phi^*g^{\tilde{\Sigma}_{\bar{\rho}}}}{\bar{\rho}^2}-g^{\mathbb{S}^n\slash\Gamma} \Big\|_{C^{k,\alpha}(g^{\mathbb{S}^n\slash\Gamma})}< C \eta(\bar{\rho}),$$
    \item for all $k\in\mathbb{N}$ and $0<\alpha<1$, there exists $C=C(k,\alpha,n)>0$ such that
    $$\Big\| \frac{A^{\tilde{\Sigma}_{\bar{\rho}}}}{\bar{\rho}} - \frac{g^{\tilde{\Sigma}_{\bar{\rho}}} }{\bar{\rho}^2}\Big\|_{C^{k,\alpha}\big(\frac{g^{\tilde{\Sigma}_{\bar{\rho}}} }{\bar{\rho}^2}\big)} \leqslant C\eta(\bar{\rho}).$$
    \item for all $k\in\mathbb{N}$, there exists $C=C(k,n)>0$ such that on the annulus $[\frac{1}{2}\bar{\rho},2\bar{\rho}]$ we have,
    $$\bar{\rho}^{2+k}\big|\nabla^k \Rm\big|_g \leqslant C \eta(\bar{\rho}).$$
\end{enumerate}

We then define a diffeomorphism $\Phi: [\rho_1,\rho_2]\times \mathbb{S}^{n-1}\slash\Gamma \to \tilde{A}(\rho_1,\rho_2)$ in the following way: at $\bar{\rho}$,
$$\Phi(\bar{\rho},x):= \phi(x),$$
where $\phi$ is the diffeomorphism of the above point $1$, and for all $s\in [\rho_1,\rho_2]$,
$$\partial_s\Phi(s,x) = u_s(x) N^{\tilde{\Sigma}_s}\big(\Phi(s,x)\big) =  -\frac{\nabla s}{|\nabla s|^2},$$
where $u_s$ is such that $\Phi(s,\mathbb{S}^{n-1}\slash\Gamma) = \tilde{\Sigma}_s$. This is a diffeomorphism on its image since two curves following the vector field $-\frac{\nabla s}{|\nabla s|^2}$, which never vanishes, cannot intersect if they are not equal.

\subsection{Control of the metric in these coordinates}

In the coordinates given by $\Phi$, the metric has the following form,
$$\Phi^*g(s,x) = u_s(x)^2ds^2 + s^2 h_s(x),$$
where $h_s$ is a metric on $\mathbb{S}^{n-1}\slash\Gamma$.
\\

Let us take the following notations for the rest of the section in which we will always work on $\mathbb{R}^n\slash\Gamma$:
\begin{itemize}
	\item $\phi_s(x) = \Phi(s,x)$, which implies that $s^2h_s = \phi_s^*g^{\tilde{\Sigma}_s}$.
	\item $H(s) = \phi_s^*H^{\tilde{\Sigma}_s} \equiv \frac{n-1}{s}$,
	\item $A(s) = \phi_s^*A^{\tilde{\Sigma}_s}$,
	\item $N(s) = \phi_s^*N^{\tilde{\Sigma}_s} $
	\item $f_i(s) = d_{(s,.)}\Phi(0,e_i)$, where the $e_i$ form an orthonormal basis of $T_x\mathbb{S}^{n-1}\slash \Gamma$,
	\item $\nabla^s = \phi_s^*\nabla^{\tilde{\Sigma}_s}$,
	\item $\Delta^s = \phi_s^*\Delta^{\tilde{\Sigma}_s}$, where $\Delta^{\tilde{\Sigma}_s}$ is the Laplace-Beltrami operator associated to the metric $g^{\tilde{\Sigma}_s}$,
	\item $\textup{grad}^{s}$ is the gradient for the metric $s^2h_s$,
	\item $K(s)_i^j = g\big(\Rm_g(N(s),f_i(s))f_j(s),N(s)\big)$.
\end{itemize}
We have the following variations for the different geometric quantities.

	By \cite[Theorem 3.2]{hp}, given a variation $\partial_s\Phi(s,x) = u_s N^{\tilde{\Sigma}_s}$, we have the following variation formulas:
	\begin{equation}
	    \partial_s \big(s^2h_s\big) = 2 u_s A(s),\label{est 1 hp}
	\end{equation}
	\begin{equation}
	    \partial_s N(s) = \textup{grad}^{s} u_s,\label{est 2 hp}
	\end{equation}
	\begin{equation}
	    \partial_s A(s)_{ij} = - \nabla^s_i\nabla^s_j u_s + \big(\sum_kA(s)_{ik}A(s)_{j}^k+ K(s)_i^j\big),\label{est 3 hp}
	\end{equation}
	\begin{equation}
	   \partial_s H(s) = - \Big(\Delta^{s}+\big|A(s)\big|^2+\Ric(s)\big(N(s),N(s)\big)\Big)u_s.\label{est 4 hp}
	\end{equation}

Let us finally compare our Einstein metric to the flat metric in these coordinates.

\begin{prop}
 With the above notations, if we denote $g_e:=ds^2+s^2g^{\mathbb{S}^{n-1}}$, we have the following controls on $\Phi^*g$: for all $l\in\mathbb{N}$, there exists $C(l,n,v_0,D_0)>0$ such that for all $s\in \big[\rho_1,\frac{\rho_2}{2}\big]$, on the annulus $[s,2s]$, we have
 $$\|u-1\|_{C^l(g_e)}\leqslant C \eta(s),$$
 and
 $$\|h-g^{\mathbb{S}^{n-1}\slash\Gamma}\|_{C^l(g_e)}\leqslant C\eta(s),$$
 and therefore, for all $s\in[\rho_1,\frac{\rho_2}{2}]$, on the annulus of radii $s$ and $2s$,
 $$\|\Phi^*g-g_e\|_{C^l(g_e)} \leqslant C\eta(s).$$
\end{prop}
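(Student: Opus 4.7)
The plan is to control the lapse $u_s$ and the transverse metric $h_s$ separately, using the CMC structure of the foliation together with the pointwise curvature decay \eqref{contrRm}.

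I would first localize to a dyadic annulus $[s,2s]\times \mathbb{S}^{n-1}/\Gamma$ and rescale by $s$, so that the model region becomes $[1,2]\times\mathbb{S}^{n-1}/\Gamma$. In the rescaled metric, the curvature and its derivatives are bounded by $C\eta(s)$ by \eqref{contrRm}; the CMC leaves $\tilde{\Sigma}_\sigma$ for $\sigma\in[s,2s]$ become hypersurfaces of mean curvature in $[(n-1)/2, n-1]$, and we are in the setting of Proposition \ref{contrôle CMC}. It follows that the second fundamental form satisfies $A(\sigma)/\sigma = g^{\tilde{\Sigma}_\sigma}/\sigma^2 + O(\eta(s))$ in $C^{l,\alpha}$, and that the induced metric on $\tilde{\Sigma}_\sigma$ is $C^{l+1,\alpha}$-close to the round metric on $\mathbb{S}^{n-1}/\Gamma$ via some diffeomorphism, with the same order of error.

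Next, I would analyze the lapse. Since $H(s)\equiv (n-1)/s$, differentiating in $s$ and using the variation formula \eqref{est 4 hp} gives
$$\Big(\Delta^s + |A(s)|^2 + \textup{Ric}\big(N(s), N(s)\big)\Big) u_s = \frac{n-1}{s^2}.$$
Rescaled to unit scale, this is an elliptic equation on a hypersurface close to $\mathbb{S}^{n-1}/\Gamma$ whose operator is $C^{l,\alpha}$-close to $\Delta^{\mathbb{S}^{n-1}/\Gamma} + (n-1)$. The hypothesis $\Gamma\neq\{e\}$ is essential here: as noted before Proposition \ref{Existence pert CMC}, the eigenfunctions for $-(n-1)$ on $\mathbb{S}^{n-1}$ are restrictions of linear forms on $\mathbb{R}^n$, none of which is $\Gamma$-invariant. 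Since the constant $1$ solves the limiting equation $(\Delta^{\mathbb{S}^{n-1}/\Gamma}+(n-1))\cdot 1 = n-1$, writing $u_s = 1 + v_s$ and inverting the perturbed operator yields $\|v_s\|_{C^{l,\alpha}} \leqslant C\eta(s)$ at each scale.

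Finally, I would control $h_s$ by integrating the ODE coming from \eqref{est 1 hp}:
$$\partial_s h_s = \frac{2u_s\, A(s)}{s^2} - \frac{2 h_s}{s}.$$
The CMC estimate $A(s)/s = h_s + O(\eta(s))$ together with $u_s = 1 + O(\eta(s))$ shows the right-hand side has size $O(\eta(s)/s)$. Taking $h_{\bar{\rho}}$ as initial condition—where by construction of $\Phi$ at $\bar{\rho}$ we have $\|h_{\bar{\rho}} - g^{\mathbb{S}^{n-1}/\Gamma}\|_{C^{l,\alpha}} \leqslant C\eta(\bar{\rho})$—integration gives
$$\|h_s - g^{\mathbb{S}^{n-1}/\Gamma}\|_{C^{l,\alpha}} \;\lesssim\; \eta(\bar{\rho}) + \Big|\int_{\bar{\rho}}^{s} \frac{\eta(\sigma)}{\sigma}\, d\sigma\Big| \;\lesssim\; \eta(s),$$
where the last bound uses the power-law shape of $\eta$ in each half of the annulus, so that the integrals of $\sigma^{-1}(\rho_1/\sigma)^{\beta_1}$ and $\sigma^{-1}(\sigma/\rho_2)^{\beta_2}$ are bounded by $\eta(s)$ times $\max(\beta_1,\beta_2)^{-1}$. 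Combined with the bound on $u_s$ this yields the stated $C^l$ control of $\Phi^* g - g_e$.

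The main obstacle is obtaining uniform $C^{l,\alpha}$ bounds in the elliptic inversion for $u_s$ at every scale $s\in[\rho_1,\rho_2/2]$: one needs both the coefficients of the operator and the induced metric on $\tilde{\Sigma}_s$ to be close to their round counterparts in a norm strong enough to invert with loss of two derivatives, and one needs the norm of the inverse of $\Delta^{\mathbb{S}^{n-1}/\Gamma}+(n-1)$ on $C^{l,\alpha}$ to be stable under this perturbation. The bootstrap from low to arbitrary $l$ is then carried out inductively by reinjecting the improved control on $(u_s,h_s)$ into \eqref{est 1 hp}--\eqref{est 4 hp} and using the curvature bound \eqref{contrRm} together with the Einstein equation to close the estimates.
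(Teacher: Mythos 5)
Your proposal is correct and follows essentially the same route as the paper: derive the elliptic equation for the lapse from the CMC condition and the variation formula \eqref{est 4 hp}, invert $\Delta^{\mathbb{S}^{n-1}/\Gamma}+(n-1)$ using $\Gamma\neq\{e\}$ after controlling $A(s)$ via Proposition \ref{contrôle CMC}, then integrate \eqref{est 1 hp} from the optimal scale $\bar{\rho}$ using the power-law shape of $\eta$, and bootstrap to higher order via \eqref{est 2 hp}--\eqref{est 4 hp}. The only stylistic difference is your explicit dyadic rescaling, which the paper leaves implicit in the normalization of its norms.
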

\begin{proof}
    All along the proof, $C$ will denote a positive constant that may vary from line to line, but which only depends on the constants $n$, $v_0$, $D_0$ and the order of regularity $l\in \mathbb{N}$, $0<\alpha<1$.

    Let us start by noting that the equation \eqref{est 4 hp} implies that for all $s$, $u_s$ satisfies the following equation
	\begin{equation}
		\frac{n-1}{s^2} = \Big(\Delta^{s}+\big|A(s)\big|^2+\Ric(s)\big(N(s),N(s)\big)\Big)u_s.\label{equation u}
	\end{equation}
	Indeed, by following the normal perturbation $u_s N^{\tilde{\Sigma}_s} =-\frac{\nabla s}{|\nabla s|^2}$, the variation of the mean curvature is exactly $-\frac{n-1}{s^2}$. In particular, for all $s$, we have
		\begin{equation}
		\Big(\Delta^{s}+\big|A(s)\big|^2+\Ric(s)\big(N(s),N(s)\big)\Big)(u_s-1) = \frac{n-1}{s^2}-\big|A(s)\big|^2-\Ric(s)\big(N(s),N(s)\big),\label{equation u-1}
	\end{equation}
	
	According to Proposition \ref{contrôle CMC}, while $\|h_s-g^{\mathbb{S}^{n-1}\slash\Gamma}\|_{C^{1,\alpha}(g^{\mathbb{S}^{n-1}\slash\Gamma})}$ is sufficiently small, there exists $C = C(l)>0$ such that for all $s$,
\begin{equation}
	\Big\|\frac{A(s)}{s}-h_s\Big\|_{C^{l,\alpha}(h_s)}\leqslant C \eta(s).\label{controle A}
\end{equation}		
	Since $s^2\big|\Ric(s)\big(N(s),N(s)\big)\big|<\eta(s)$, this implies that the right hand side of \eqref{equation u-1} satisfies: there exists $C=C(\alpha,n)>0$ such that for all $s$,
	$$\Big\|n-1-s^2\big|A(s)\big|^2-s^2\Ric(s)\big(N(s),N(s)\big)\Big\|_{C^{\alpha}(g^{\mathbb{S}^{n-1}\slash\Gamma})}\leqslant C\eta(s).$$	
	Likewise, for the left hand side, for any function $v: \mathbb{S}^{n-1}\slash\Gamma\to \mathbb{R}$ we have
	\begin{align*}
	\Big\|s^2\Big(\Delta^{s}+\big|A(s)\big|^2+\Ric(s)\big(N(s),N(s)\big)\Big)v - &\;\Big(\Delta^{\mathbb{S}^{n-1}\slash\Gamma}+n-1\Big) v \Big\|_{C^{\alpha}(g^{\mathbb{S}^{n-1}\slash\Gamma})}\\
	 &\;\leqslant C\eta(s) \|v\|_{C^{2,\alpha}(g^{\mathbb{S}^{n-1}\slash\Gamma})}.
	\end{align*}
	Since $\Delta^{\mathbb{S}^{n-1}\slash\Gamma}+n-1$ is invertible for $\Gamma\neq \{\mathrm{Id}\}$, we get by the inverse function theorem, Lemma \ref{fonctions inverses}, the following control: there exists $C=C(n,\alpha)>0$ such that for all $s$
	 \begin{equation}
	\|u_s-1\|_{C^{2,\alpha}(g^{\mathbb{S}^{n-1}\slash\Gamma})}\leqslant C\eta(s).\label{controle u-1}
	 \end{equation}
	 
	 Let us now show that for all $s\in [\rho_1,\rho_2]$, $\|h_s-g^{\mathbb{S}^{n-1}\slash\Gamma}\|_{C^{1,\alpha}(g^{\mathbb{S}^{n-1}\slash\Gamma})}$ remains small. We will actually show that there exists $C=C(n,\alpha)>0$ such that $\|h_s-g^{\mathbb{S}^{n-1}\slash\Gamma}\|_{C^{1,\alpha}(g^{\mathbb{S}^{n-1}\slash\Gamma})}\leqslant C \eta(s)$.  
	The expression \eqref{est 1 hp} can be rewritten $$\partial_s \big(h_s-g^{\mathbb{S}^{n-1}\slash\Gamma}\big) =\frac{2}{s}\Big(u\frac{A^s}{s}-h_s\Big),$$
	but by the control \eqref{controle u-1}, and Proposition \ref{contrôle CMC}, there exists $C=C(n,\alpha)>0$ such that we have $\Big\|u\frac{A^s}{s}-h_s\Big\|_{C^{1,\alpha}(g^{\mathbb{S}^{n-1}\slash\Gamma})}\leqslant C \eta(s)$. Recall that $$\eta(s):= C\epsilon\Big[\Big(\frac{\rho_1}{s}\Big)^{\beta_1}+\Big(\frac{s}{\rho_2}\Big)^{\beta_2} \Big],$$ to deduce that for $\bar{\rho}<s_0<\rho_2$, we have
	\begin{align*}
	\big\|h_{s_0}-g^{\mathbb{S}^{n-1}\slash\Gamma}\big\|_{C^{1,\alpha}(g^{\mathbb{S}^{n-1}\slash\Gamma})} \leqslant&\; C \Big(\eta(\bar{\rho})+ \int_{\bar{\rho}}^{s_0}\frac{2}{s} \eta(s) ds\Big)\\
	=&\; C\eta(\bar{\rho})+\Big(\frac{2C\epsilon}{\rho_2^{\beta_2}} \int_{\bar{\rho}}^{s_0} s^{\beta_2-1} ds\Big)\\
	=&\; C\epsilon\frac{\bar{\rho}^{\beta_2}}{\rho_2^{\beta_2}}+\Big(\frac{2C\epsilon}{\rho_2^{\beta_2}}(s_0^{\beta_2}-\bar{\rho}^{\beta_2}) \Big)\\
	\leqslant &\; C \eta(s_0).
	\end{align*}
	and similarly, for $\rho_1<s_0<\bar{\rho}$,
	$$\big\|h_{s_0}-g^{\mathbb{S}^{n-1}\slash\Gamma}\big\|_{C^{1,\alpha}(g^{\mathbb{S}^{n-1}\slash\Gamma})}\leqslant C \eta(s_0).$$
	
	To obtain controls on higher derivatives of $u$ and $h_s$, we use the other equalities of \cite[Theorem 3.2]{hp}: according to \eqref{est 2 hp}, and the control \eqref{controle u-1}, there exists $C =C(n,\alpha)$ such that for all $s$ we have
\begin{equation}
	\|\partial_s N(s)\|_{C^{1,\alpha}(h_s)}\leqslant C\eta(s)\label{controle N}
\end{equation}	
according to \eqref{est 3 hp} and the controls \eqref{controle u-1}, \eqref{controle A} and \eqref{controle N}, and since the $l$-th derivatives of the curvature are bounded by $s^{-2-l} \eta(s)$, there exists $C=C(n,\alpha)>0$ such that for all $s$, we have:
	\begin{equation}
	\Big\|\partial_s \Big(\frac{A(s)}{s}\Big)\Big\|_{C^{\alpha}(h_s)}\leqslant C\eta(s).\label{controle dA}
	\end{equation}
	
	Differentiating the equality \eqref{est 4 hp} with respect to $s$, and using the inequalities $\|\partial_s h_s\|_{C^{1,\alpha}(h_s)}\leqslant C\eta(s)$, and \eqref{controle dA}, 
	there exists $C=C(n,\alpha)>0$ such that for all $s$, we have:
	$$\Big\|\Big(\Delta^{\mathbb{S}^{n-1}\slash\Gamma}+n-1\Big)\partial_su_s\Big\|_{C^\alpha(h_s)} \leqslant C \eta(s).$$
	We can therefore conclude that for all $s$, $\|\partial_su_s \|_{C^{2,\alpha}(h_s)}\leqslant C\eta(s)$.
	
	Iterating this for higher derivatives in all directions, we obtain the stated controls.
\end{proof}

\subsection{Proximity between the Einstein metric and the naïve desingularization in weighted $C^k$-norm}

Now, to construct coordinates on the whole manifold $M$, we just "glue" the coordinates we have on each part together by using the local diffeomorphisms with the common asymptotic cone of each part. We obtain the following control for the metric.
\begin{thm}\label{dégénérescence et proximité C1alpha}
	Let $D_0,v_0>0$ and $(M^{\mathcal{E}}_i,g^\mathcal{E}_i)$ a sequence of Einstein manifolds satisfying
	\begin{itemize}
		\item the volume is bounded below by $v_0>0$,
		\item the diameter is bounded by $D_0$,
		\item the Ricci curvature is bounded $|\Ric|\leq 3$.
	\end{itemize}
	
	Then, there exists a subsequence with fixed topology $M=M_i^\mathcal{E}$ and a sequence of naïve desingularizations $(M,g^D_{t_i})$ of an Einstein orbifold $(M_o,g_o)$ such that: for all $l\in \mathbb{N}$ and $i$ large enough, there exists $C(l, v_0, D_0)>0$, $\beta_1(v_0, D_0)>0$ and $\beta_2(v_0, D_0)>0$, $\epsilon_i>0$, $\epsilon_i\to 0$, and a diffeomorphism $\Phi_i: M\to M$ satisfying
	\begin{enumerate}
	\item on $\mathcal{M}_o(\epsilon_i)$, we have
	$$\Big\|\Phi^*_ig^\mathcal{E}_i-g_o\Big\|_{C^l(g_o)}\leqslant C\epsilon_i.$$

	\item at the different scales $T_{i,j}$ (associated to $t_i$), on $\mathcal{N}_{j}(\epsilon_i)$ we have
	$$\Big\|\frac{\Phi^*_ig^\mathcal{E}_i}{T_{i,j}}-g_{b_j}\Big\|_{C^l(g_{b_j})} \leqslant C\epsilon_i.$$
	
	\item in the intermediate regions, $\mathcal{A}_{k}(t_i,\epsilon_i)$ included in a metric annulus of radii $\rho_1^{i,k}= \frac{1}{8}T_{i,k}^{\frac{1}{2}}\epsilon_i^{-1}$, $\rho_2^{i,k} = 8 T_{i,j}^{\frac{1}{2}}\epsilon_i$, for all $\rho_1^{i,k}\leqslant \rho \leqslant \frac{1}{2}\rho_2^{i,k}$, and denoting $\Phi_{i,\rho} = \Phi_i \circ \phi_\rho$, where $\phi_\rho$ is the homothetic transformation of ration $\rho$ on $\mathbb{R}^4\slash\Gamma_k$ with its flat metric $g_e$, we have
	$$\Big\|\frac{\Phi_{i,\rho}^*g^\mathcal{E}_i}{\rho^2}-g_e\Big\|_{C^l(A_e(1,2))} \leqslant C\epsilon_i\Big[\Big(\frac{\rho_1^{i,k}}{\rho}\Big)^{\beta_1}+\Big(\frac{\rho}{\rho_2^{i,k}}\Big)^{\beta_2}\Big].$$
	\end{enumerate}
\end{thm}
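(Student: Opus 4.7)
The plan is to assemble the local pieces already built in the paper: the $C^l$-approximation on the orbifold and bubble regions coming from Proposition \ref{description dégénérescence}, and the weighted $C^l$-control on each neck coming from Proposition \ref{bonnes coord}, then glue the resulting local diffeomorphisms into a global one using the diffeomorphism patching of the naïve desingularization construction.

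First, from Theorem \ref{dégénérescence dim 4} applied to $(M_i^\mathcal{E},g_i^\mathcal{E})$, I extract a subsequence (still indexed by $i$) converging in the Gromov-Hausdorff sense to an Einstein orbifold $(M_o,g_o)$, and at each singular point I iteratively extract bubbles using the points of maximal curvature at the appropriate scales, obtaining a full tree of Ricci-flat ALE orbifolds $(N_j,g_{b_j})$ with gluing points $p_j$ and relative scales $t_{i,j}$. By Proposition \ref{description dégénérescence}, after possibly passing to a further subsequence, we may choose a sequence $\epsilon_i\to 0$ and diffeomorphisms $\Phi_i^o:M_o(\epsilon_i)\to \mathcal{M}_o(\epsilon_i)\subset M$ and $\Phi_i^j:N_j(\epsilon_i)\to \mathcal{N}_j(\epsilon_i)\subset M$ such that
$$\|(\Phi_i^o)^*g_i^\mathcal{E}-g_o\|_{C^l(g_o)}\leqslant \epsilon_i\qquad\text{and}\qquad \Big\|\frac{(\Phi_i^j)^*g_i^\mathcal{E}}{T_{i,j}}-g_{b_j}\Big\|_{C^l(g_{b_j})}\leqslant \epsilon_i,$$
with $\int_{\mathcal{A}_k(t_i,\epsilon_i)}|\Rm|^2dv_{g_i^\mathcal{E}}<\epsilon_i^2$ on each intermediate annulus. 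This gives (1) and (2) directly.

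For (3), I fix an intermediate annulus $\mathcal{A}_k(t_i,\epsilon_i)\subset A(\rho_1^{i,k},\rho_2^{i,k})$ and check the hypotheses of Proposition \ref{bonnes coord}, which in turn refers back to Proposition \ref{feuilletage}: the $L^2$-curvature bound holds by construction, the almost-Euclidean volume ratio condition holds by Lemma \ref{volume cone} for $\epsilon_i$ small enough (since the balls at the two scales are Gromov-Hausdorff close to Euclidean balls in $\mathbb{R}^4/\Gamma_k$), the bounds on Ricci and diameter are given, and the $\epsilon$-approximation ensures we are in the setting of the preceding sections. Proposition \ref{bonnes coord} then furnishes a diffeomorphism
$$\Phi_i^{k}: A_e\Big(2\rho_1^{i,k},\tfrac{1}{2}\rho_2^{i,k}\Big) \to \tilde A\Big(2\rho_1^{i,k},\tfrac{1}{2}\rho_2^{i,k}\Big)$$
with exactly the weighted estimate stated in (3), using exponents $\beta_1,\beta_2>0$ arising from the curvature decay \eqref{contrRm} of \cite[Proposition 3]{ban}.

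The final step is to glue $\Phi_i^o$, $\Phi_i^j$, and $\Phi_i^k$ into a single diffeomorphism $\Phi_i:M\to M$. In each neck region the three coordinate systems are (up to shrinking) all graphs over the flat annulus $A_e\subset\mathbb{R}^4/\Gamma_k$, because the orbifold chart $\Phi_k$ of Definition \ref{orb Ein}, the ALE chart at infinity $\Psi_\infty$ of Definition \ref{def orb ale}, and the CMC-based chart $\Phi_i^k$ all identify their domain with a flat annulus in $\mathbb{R}^4/\Gamma_k$ on which both the metric $\Phi_i^*g_i^\mathcal{E}$ and the naïve desingularization $g^D_{t_i}$ are $C^l$-close to $g_e$ at the scale of the annulus. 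I interpolate between them using a partition of unity in the radial variable, exactly as in the naïve gluing construction: pick cutoffs supported near the two ends of each neck and define $\Phi_i$ to equal $\Phi_i^o$, respectively $\Phi_i^j$, outside the neck, and to be $\Phi_i^k$ composed with a flat-annulus diffeomorphism interpolating the transition maps $(\Phi_i^k)^{-1}\circ \Phi_i^o\circ \Phi_k$ and $(\Phi_i^k)^{-1}\circ \Phi_i^j\circ \Psi_\infty\circ\phi_{T_{i,j}^{1/2}}$ inside. The weighted bound (3) is preserved by this interpolation because the transition maps are $C^{l+1}$-close to the identity at speed $\epsilon_i$ at the ends of the neck, which is dominated by $\eta(\rho)$ there.

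The step I expect to be most delicate is the final gluing: one must verify that the two transition maps between the CMC coordinates and the orbifold/ALE coordinates are not only close to an isometry of $\mathbb{R}^4/\Gamma_k$ but can be chosen to be a \emph{fixed} isometry $\phi_k\in \mathrm{Isom}(\mathbb{R}^4/\Gamma_k)$ (this is the gauge choice mentioned in the earlier Remark). Concretely, one has to compose $\Phi_i^o$ and $\Phi_i^j$ with suitable elements of $\mathrm{Isom}(\mathbb{R}^4/\Gamma_k)$ so that the two transition maps agree up to error bounded by $\eta$ on the overlap $A_e(\rho,2\rho)$ for $\rho$ near $\bar\rho^{i,k}:=(({\rho_1^{i,k}})^{\beta_1}({\rho_2^{i,k}})^{\beta_2})^{1/(\beta_1+\beta_2)}$, after which the cutoff interpolation introduces an error of the same order $\eta(\rho)$ and the weighted estimate propagates. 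Since the orbifold and ALE charts themselves have $\mathrm{Isom}(\mathbb{R}^4/\Gamma_k)$-ambiguity, this adjustment is always possible, and the theorem follows.
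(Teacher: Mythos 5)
Your proposal is correct and follows the same route the paper takes: the paper itself gives only a one-sentence justification (``we just glue the coordinates we have on each part together'') preceding the theorem, and your argument fills in that gluing by combining Proposition \ref{description dégénérescence} for the orbifold and bubble regions with Proposition \ref{bonnes coord} for the necks, then patching via cutoffs and $\mathrm{Isom}(\mathbb{R}^4/\Gamma_k)$-gauge adjustments. You have also correctly identified the gauge ambiguity as the delicate point, which is consistent with the $\phi$ appearing in the naïve desingularization $g^D_{\phi,t_i}$ of the statement.
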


\begin{rem}
    This is indeed much better than a $C^\infty$ convergence on compacts to a flat cone as we have $\Big[\Big(\frac{\rho_1^{i,k}}{\rho}\Big)^{\beta_1}+\Big(\frac{\rho}{\rho_2^{i,k}}\Big)^{\beta_2}\Big]\leqslant 2$, and a decay in the annulus.
\end{rem}

\newpage

\appendix

\section{Optimal $C^{k,\alpha}$-proximity to a round sphere}

In this appendix, we are interested in metrics with pinched positive sectional curvatures and want to estimate how close to a round metric they have to be. To start with, a consequence of Cheeger-Gromov compactness is the following lemma, see \cite[Chapter 10]{pet} for instance.

\begin{lem}\label{presque rigidité sphere}\label{sec pincée ch gro}
	For all $n$, $0<\alpha<1$, $\delta>0$ and $v_0>0$, there exists $\epsilon>0$ such that if $(M^n,g)$ satisfies
	\begin{enumerate}
	\item $1-\epsilon<Sec(g)<1+\epsilon$,
	\item and $\vol(M)>v_0$,
	\end{enumerate} 
	then, there exists $\Gamma$ a finite subgroup of $SO(n+1)$ (whose order is bounded by a function of $v_0$), and a diffeomorphism $\Phi: \mathbb{S}^{n}\slash\Gamma \to M$ for which:
	$$\big\|\Phi^*g-g^{\mathbb{S}^n\slash\Gamma}\big\|_{C^{1,\alpha}(g^{\mathbb{S}^n\slash\Gamma})}\leqslant \delta,$$
	where $g^{\mathbb{S}^n\slash\Gamma}$ is a round metric on $\mathbb{S}^n\slash\Gamma$.
\end{lem}

In this appendix, we will prove the following proposition.

\begin{prop}\label{Controle précis distance}
Let $(M,g_0)$ be a Riemannian manifold of dimension $n$ with sectional curvatures equal to $1$, and $k\in\mathbb{N}$.
    Then, there exists $\delta>0$ and $C>0$, such that if a metric $g$ on $M$ satisfies:
   $$\|g-g_0\|_{C^{1,\alpha}(g_0)}\leqslant \delta,$$
    then, there exists a diffeomorphism $\Phi: M\to M$, for which $$\|\Phi^*g-g_0\|_{C^{k+1,\alpha}(g_0)}\leqslant C\big\|\Ric(g)-(n-1)g\big\|_{C^k(g_0)}.$$
    \end{prop}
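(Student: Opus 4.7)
The plan is a DeTurck-type gauge fixing combined with elliptic regularity for the linearized Einstein operator. First, via an implicit function theorem, I would produce a diffeomorphism $\Phi$ close to the identity such that $h := \Phi^* g - g_0$ satisfies the Bianchi gauge $B_{g_0}(h) := \delta_{g_0} h + \tfrac{1}{2} d(\mathrm{tr}_{g_0} h) = 0$. Concretely, one looks for a vector field $X$ solving $B_{g_0}(\exp_X^* g - g_0) = 0$; the linearization at $X = 0$ is $X \mapsto B_{g_0}(\mathcal{L}_X g_0)$, an elliptic second-order operator whose kernel consists of Killing fields of $g_0$. Quotienting by Killing fields and applying the implicit function theorem yields a unique small $X$ with $\|X\|_{C^{k+2,\alpha}} \le C\|g - g_0\|_{C^{k+1,\alpha}}$.

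In this gauge, a standard expansion (see \cite[Ch.~12]{bes}) gives
\begin{equation*}
\Ric(g_0 + h) - (n-1)(g_0 + h) = \tfrac{1}{2}\Delta_L h - (n-1) h + Q(h),
\end{equation*}
where $\Delta_L$ is the Lichnerowicz Laplacian of $g_0$ and $Q$ is nonlinear with $\|Q(h)\|_{C^{k-1,\alpha}} \le C\|h\|_{C^{k+1,\alpha}}^2$ when $\|h\|_{C^{k+1,\alpha}}$ is small. Write $L := \tfrac{1}{2}\Delta_L - (n-1)\mathrm{Id}$; restricted to Bianchi-gauge tensors it is elliptic. For a quotient $M = \mathbb{S}^n/\Gamma$ with $\Gamma \neq \{e\}$, the operator $L$ is invertible on this slice, by an argument in the spirit of Lemma \ref{elliptique et noyau sur la sphère}: the potential kernel elements of conformal type $fg_0$ require $f$ to be a first eigenfunction of $\Delta$ on $\mathbb{S}^n$, hence a restriction of a linear function, which cannot be $\Gamma$-invariant.

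On the round sphere itself, $L$ has a finite-dimensional kernel corresponding to infinitesimal Möbius transformations. I would absorb this by enlarging the initial gauge fixing: search for $\Phi = \psi \circ \exp_X$ with $\psi$ a conformal Möbius transformation of $\mathbb{S}^n$, and use the additional finite-dimensional parameter $\psi$ to kill the $L^2$-projection of $h$ onto $\ker L$. Once this is done, Schauder estimates (for $k \ge 1$) give $\|h\|_{C^{k+1,\alpha}} \le C\|Lh\|_{C^{k-1,\alpha}}$; for $k = 0$, one instead uses $W^{2,p}$ estimates with $p > n/(1-\alpha)$ and the Sobolev embedding $W^{2,p} \hookrightarrow C^{1,\alpha}$.

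Substituting $Lh = [\Ric(g) - (n-1)g] - Q(h)$ and absorbing the quadratic term for $\delta$ sufficiently small, together with the embedding $C^k \hookrightarrow C^{k-1,\alpha}$ on a compact manifold, yields the announced bound. The main obstacle is the delicate handling of the Möbius kernel on the round sphere: one must verify that the enlarged gauge fixing admits a clean implicit function theorem argument with estimates uniform over the Möbius parameter, and that the resulting diffeomorphism $\Phi$ still satisfies the required closeness to the identity.
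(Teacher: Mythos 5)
Your overall strategy---gauge fixing followed by an implicit/inverse function theorem and elliptic regularity---matches the paper's, but you use a different gauge and a different linearized operator. The paper works in the divergence-free Ebin slice $\delta_{g_0}(\Phi^*g)=0$ and applies Anderson's gauged operator $E_{g_0}(g) = \Ric(g) - \tfrac{\R(g)}{2}g + \lambda g + \delta^*_g\delta_{g_0}g$, whose vanishing characterizes normalized Einstein metrics in gauge; invertibility of the linearization at $g_0$ is then quoted from Besse/Bourguignon. You instead work in the Bianchi gauge $B_{g_0}(h)=\delta_{g_0}h + \tfrac{1}{2}d\,\mathrm{tr}_{g_0}h = 0$ with the operator $L = \tfrac12\Delta_L - (n-1)\mathrm{Id}$. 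Both are legitimate DeTurck-type set-ups, and the conversion between $C^k$ and $W^{k,p}$/Schauder norms you carry out at the end is exactly what the paper does as well.

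Where your write-up goes astray is in the "Möbius kernel" discussion. For $n\geqslant 3$ (the only case the paper uses), the operator $L$ has \emph{no} kernel on the Bianchi slice over the round $\mathbb{S}^n$: the infinitesimal conformal (non-Killing) directions take the form $\mathcal{L}_Xg_0 = fg_0$ with $f$ a first spherical harmonic, and one computes $B_{g_0}(fg_0)=\tfrac{n-2}{2}df\neq 0$, so these directions do not even lie on the gauge slice; moreover $L(fg_0) = \tfrac{2-n}{2}fg_0 \neq 0$, so they are not in $\ker L$ either. For the trace-free transverse part, $Lh = \tfrac12\nabla^*\nabla h + h$ is strictly positive; for constant conformal rescalings $L(cg_0) = -(n-1)cg_0\neq 0$. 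This is precisely the Einstein-rigidity of $\mathbb{S}^n$ (Bourguignon's result recorded in Besse, Cor.~12.72, which the paper cites), and it renders your auxiliary conformal parameter $\psi$ and the associated "enlarged gauge fixing" unnecessary. Your actual concern (a degenerate linearization on the round sphere) would be an issue in $n=2$, but that case is irrelevant here. If you remove the Möbius step and simply invoke rigidity of $\mathbb{S}^n$ to invert $L$ on the Bianchi slice (with the same argument as your Lemma~\ref{elliptique et noyau sur la sphère}-type discussion for nontrivial $\Gamma$), your proof closes cleanly and is a valid alternative to the paper's.
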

As an immediate consequence of Lemma \ref{presque rigidité sphere} and Proposition \ref{Controle précis distance}, we get the following statement.
\begin{cor}
For all $n$, $0<\alpha<1$ and $v_0>0$, there exists $\epsilon_0>0$ and $C>0$ such that for any $0<\epsilon<\epsilon_0$, if $(M^n,g)$ satisfies:
	\begin{enumerate}
	\item $1-\epsilon<Sec(g)<1+\epsilon$,
	\item and $\vol(M,g)>v_0$,
	\end{enumerate} 
		then, there exists $\Gamma$ a finite subgroup of $SO(n+1)$, and a diffeomorphism $\Phi: \mathbb{S}^{n}\slash\Gamma \to M$ for which
		$$\|\Phi^*g-g_0\|_{C^{1,\alpha}(g_0)}\leqslant C \epsilon.$$
\end{cor}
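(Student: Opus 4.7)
The plan is to put $g$ in a canonical gauge relative to $g_0$ via a diffeomorphism, recast the Einstein deviation as an elliptic PDE for the difference tensor, and then apply elliptic regularity together with the infinitesimal rigidity of the round metric. First I would invoke the Bianchi (DeTurck) slice theorem: the map $X \mapsto B_{g_0}\bigl(\exp_{g_0}(X)^* g - g_0\bigr)$, where $B_{g_0}(h) := \delta_{g_0} h + \tfrac{1}{2} d\,\textup{tr}_{g_0}(h)$ is the Bianchi operator, has linearization $\delta_{g_0} \delta_{g_0}^*$ at $X = 0$, which is elliptic and self-adjoint with finite-dimensional kernel equal to the Killing fields of $g_0$. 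By the implicit function theorem, once $\|g - g_0\|_{C^{k+1,\alpha}}$ is sufficiently small, one obtains a diffeomorphism $\Phi$ such that $h := \Phi^* g - g_0$ satisfies $B_{g_0}(h) = 0$ with $\|h\|_{C^{k+1,\alpha}} \leqslant C \|g - g_0\|_{C^{k+1,\alpha}}$. The residual ambiguity coming from $\textup{Isom}(g_0)$ is removed by composing $\Phi$ with an isometry of $g_0$ chosen so that $h$ becomes $L^2$-orthogonal to the tangent space of the orbit $\textup{Isom}(g_0)\cdot g_0$ at $g_0$.

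Next I would use the classical linearization identity
$$2\,D\Ric_{g_0}(h) + 2\,\delta_{g_0}^* B_{g_0}(h) = \nabla^* \nabla h - 2\mathring{R}_{g_0}(h) + \Ric_{g_0} \circ h + h \circ \Ric_{g_0},$$
which, specialised using $\Ric_{g_0} = (n-1) g_0$, $\mathring{R}_{g_0}(h) = \textup{tr}(h)\, g_0 - h$ and the gauge $B_{g_0}(h) = 0$, reduces the Einstein deviation to
$$L h := \tfrac{1}{2}\nabla^* \nabla h + h - \textup{tr}(h)\, g_0 = \bigl(\Ric(g) - (n-1)g\bigr) - Q(h),$$
where $Q$ collects contributions at least quadratic in $h$ and its derivatives up to order two, with coefficients uniformly bounded for $\|h\|_{C^{k+1,\alpha}}$ small. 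The operator $L$ is elliptic with principal symbol the Laplacian, and its kernel on Bianchi-gauge symmetric $2$-tensors coincides with the space of essential infinitesimal Einstein deformations of $g_0$. On a space form of constant positive sectional curvature this kernel is trivial by classical infinitesimal rigidity, verified on $\mathbb{S}^n/\Gamma$ by diagonalising the Lichnerowicz Laplacian on transverse-traceless tensors and checking that no obstructing eigenvalue occurs.

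Schauder estimates then yield $\|h\|_{C^{k+1,\alpha}} \leqslant C \|L h\|_{C^{k-1,\alpha}} \leqslant C' \|L h\|_{C^k}$ for $k \geqslant 1$, with the case $k = 0$ handled via $L^p$ Calderón--Zygmund estimates for $p$ large followed by Sobolev embedding into $C^{1,\alpha}$. The quadratic remainder satisfies $\|Q(h)\|_{C^{k-1,\alpha}} \leqslant C \|h\|_{C^{k+1,\alpha}}^2$, so for $\delta$ small it is absorbed into the left-hand side, delivering the announced bound $\|h\|_{C^{k+1,\alpha}} \leqslant C \|\Ric(g) - (n-1)g\|_{C^k}$.

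The main obstacle will be the kernel analysis: verifying infinitesimal rigidity of $g_0$ on the Bianchi slice, and properly quotienting out the $\textup{Isom}(g_0)$-ambiguity, so that $L$ is truly invertible and the Schauder estimate applies without cokernel correction. Once this is done, the remainder is a routine nonlinear elliptic bootstrap.
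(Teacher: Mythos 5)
Your proposal essentially reproduces a proof of Proposition~A.3 (the sharp $C^{k+1,\alpha}$-estimate in terms of $\|\Ric(g)-(n-1)g\|_{C^k}$), using the Bianchi gauge and the Lichnerowicz-type linearization rather than the paper's divergence-free gauge and shifted Einstein operator $E_{g_0}$. Both gauge choices work and lead to the same inverse-function-theorem plus $W^{k,p}$-elliptic-estimate conclusion, so as a proof of that proposition your argument is sound (modulo the kernel check you flag, which is classical).

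However, as a proof of the \emph{corollary} there is a genuine gap: you cannot launch the argument from the hypotheses actually given. The corollary provides only the pinching $1-\epsilon<\mathrm{Sec}(g)<1+\epsilon$ and the volume lower bound $\vol(M,g)>v_0$. Your opening move, ``put $g$ in a canonical gauge relative to $g_0$,'' presupposes that you have already (i) identified the correct $\Gamma\subset SO(n+1)$, (ii) produced the reference constant-curvature metric $g_0$ on $\mathbb{S}^n/\Gamma$, and (iii) obtained a diffeomorphism $\Psi:\mathbb{S}^n/\Gamma\to M$ with $\|\Psi^*g-g_0\|_{C^{1,\alpha}}\leqslant\delta$ for $\delta$ small enough to run the slice theorem, verify invertibility of the linearization, and absorb the quadratic remainder $Q(h)$. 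None of this follows from the pinching and volume hypotheses alone; it is exactly the content of the paper's Lemma~A.1, which is a Cheeger--Gromov precompactness argument: pinched curvature gives $C^{1,\alpha}$-precompactness, the volume lower bound prevents collapse and bounds $|\Gamma|$, and a contradiction argument shows the only possible limits are round $\mathbb{S}^n/\Gamma$. Without this step there is no $g_0$ to linearize around, and no justification for the smallness needed by the implicit function theorem. Once you prepend that compactness lemma, take $k=0$ in your estimate, and observe that the pinching gives $\|\Ric(g)-(n-1)g\|_{C^0}\leqslant (n-1)\epsilon$, the rest of your argument does yield the stated conclusion.
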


\begin{rem}
	The crucial part for us is that the $\epsilon$ in the final estimate is the same as in the assumption $1-\epsilon<Sec(g)<1+\epsilon$.
\end{rem}

\begin{proof}[Proof of Proposition \ref{Controle précis distance}]
    Let $(M^n,g_0)$ be a Riemannian manifold whose sectional curvatures are constant equal to $1$. For $\lambda:= \frac{(n-2)(n-1)}{2}$, define the operator
    $$E_{g_0} (g):= \Ric(g)-\frac{\R(g)}{2}g+\lambda g + \delta^*_g\delta_{g_0} g.$$
    Then, following \cite[Section 5]{andsurv}, by \cite{ebi} and by the Bianchi identity, there exists $\delta>0$ such that $E_{g_0}^{-1}(\{0\})\cap B_{C^{1,\alpha}}(\delta)$ is exactly the set of Einstein metrics with constant $\Lambda$ which are in divergence-free gauge with respect to $g_0$ and have same volume.
    
    Let $g$ be a metric on $M$ such that $\|g-g_0\|_{C^{k+1,\alpha}(g_0)}\leqslant \delta$ for $\delta>0$ a constant which we will choose small enough in the rest of the proof. According to \cite{ebi}, for $\delta$ small enough, there exists a diffeomorphism $\Phi: M \to M$ such that  $$\delta_{g_0}\Phi^*g=0.$$    Hence, we have 
    \begin{equation}
        \|E_{g_0}(\Phi^*g)\|_{C^k(g_0)}\leqslant \big\|\Ric(g)-(n-1)g\big\|_{C^k(g_0)}.\label{contrôle E(phig) Ric(g)}
    \end{equation}
    Now, by \cite[Corollary 12.72 (Bourguignon, unpublished)]{bes}, the linearization of $E$ at $g_0$ is invertible (see again \cite[Section 5]{andsurv} for the adaptation to the operator $E$) and by the inverse function theorem Lemma \ref{fonctions inverses} we deduce that for $\delta$ small enough, and for all $1\leqslant p<+\infty$ and $k\in \mathbb{N}$ there exists $C>0$, such that we have
    $$\|\Phi^*g-g_0\|_{W^{k+2,p}(g_0)}\leqslant C\|E_{g_0}(\Phi^*g)\big\|_{W^{k,p}(g_0)},$$
    and finally, by \eqref{contrôle E(phig) Ric(g)}, we have
    $$\|\Phi^*g-g_0\|_{C^{k+1,\alpha}(g_0)}\leqslant C\|\Ric(g)-(n-1)g\big\|_{C^k(g_0)},$$
    because on the sphere, for $p$ large enough, $W^{k+2,p}(g_0)$ embeds continuously in $C^{k+1,\alpha}(g_0)$, and so does $C^k(g_0)$ in $W^{k,p}(g_0)$.
\end{proof}


%
%
%
%

\end{document}